\newcommand\Cite[2] {\cite[#1]{#2}}
\def\apo           {\mbox{\sc s}}
\def\be            {\begin{equation}}
\def\bearl         {\begin{array}{l}}
\def\bearll        {\begin{array}{ll}}
\def\Bl            {{\ensuremath{\mathrm{Bl}}}}
\def\BlF           {\Bl^{\scriptscriptstyle(F)}}
\def\boti          {\,{\boxtimes}\,}
\def\C             {{\ensuremath{\mathcal C}}}
\def\cc            {^{\scriptscriptstyle\mathrm C}}
\def\ccc           {^{\scriptscriptstyle\,\mathrm C}\!}
\def\cir           {\,{\circ}\,}
\newcommand\comcat[1] {(#1\,{\downarrow}\,\UC)}
\def\complex       {{\ensuremath{\mathbbm C}}}
\def\Corr          {\mathrm v^{}_{\!F}}
\def\CWm           {\ensuremath{\mathscr F\!\mathscr M}}
\def\D             {{\ensuremath{\mathcal D}}}
\def\dsty          {\displaystyle }
\def\edist         {e_\uparrow}
\def\ee            {\end{equation}}
\def\eear          {\end{array}}
\def\EndD          {{\ensuremath{\mathrm{End}_\D}}}
\def\eps           {\varepsilon}
\def\epF           {\varepsilon_F}
\def\eq            {\,{=}\,}
\newcommand\eqpic[4]{$$ \raisebox{-4pt}{\begin{picture}(#2,#3) #4 \end{picture}} $$}
\newcommand\erf[1] {(\ref{#1})}
\def\findim        {fi\-ni\-te-di\-men\-si\-o\-nal}
\def\fine          {fine}
\def\Fun           {{\mathcal F\hspace*{-2pt}un}}
\newcommand\gpq[3] {\surf^{#1}_{#2{\sss|}#3}}
\def\Hom           {{\ensuremath{\mathrm{Hom}}}}
\def\HomD          {{\ensuremath{\mathrm{Hom}_\D}}}
\newcommand\hsp[1] {\mbox{\hspace{#1 em}}}
\def\id            {\mbox{\sl id}}
\def\idFv          {\ensuremath{\id_{F^\vee_{}}}}
\def\idsm          {\mbox{\footnotesize\sl id}}
\def\idXv          {\ensuremath{\id_{X^\vee_{}}}}
\def\ii            {{\rm i}}
\def\iK            {\imath^K}
\def\iKFe          {\imath^K_{F^\Wee_{}}}
\def\iN            {\,{\in}\,}
\newcommand\includepic[1] {{\begin{picture}(0,0)(0,0) \scalebox{.304}
                   {\includegraphics{pic_fuSc19_#1.eps}}\end{picture}}}
\newcommand\ite[1] {\item[{\rm({}W#1{\rm})}]}
\newcommand\itm[1] {\item[{\rm({}M#1}{\rm)}]}
\def\jef           {\jmath^\eps_F}
\def\ko            {{\ensuremath{\Bbbk}}}
\newcommand\labl[1]{\label{#1}\ee}
\def\mA            {{\rm A}}
\def\Map           {{\mathrm{Map}}}
\def\maps          {\ensuremath{\Map(\surf)}}
\def\mapsC         {\ensuremath{\Map\cc(\surf)}}
\def\mapsx         {\ensuremath{\mathrm{MAP}(\surf)}}
\def\mark          {\varGamma}
\def\marke         {\mark_{\!\surf}}
\def\markep        {\mark_{\!\surf'}}
\def\markstd       {\mark^{\circ}}
\def\mB            {{\rm B}}
\def\mBl           {\widetilde{\Bl}}
\def\mBlF          {\widetilde{\Bl}{}^{\scriptscriptstyle(F)}}
\def\mC            {{\rm C}}
\def\mCorr         {\widetilde{\mathrm v}^{}_{\!F}}
\def\mF            {{\rm F}}
\def\mi            {\,{-}\,}
\def\mOne          {\widetilde\Delta_\ko}
\def\mS            {{\rm S}}
\def\ms            {\mso_{\surf}}
\def\mso           {{\mathrm s}}
\newcommand\mss[2] {\mso_{\surf_{#1},\surf_{#2}}}
\def\mSurf         {\ensuremath{\slm\mathcal S\slurf}}
\def\mSurfC        {\ensuremath{\slm\mathcal S\slurf\ccc}}
\def\mT            {{\rm T}}
\def\mvs           {\widetilde{\vs}}
\def\mZ            {{\rm Z}}
\def\naturals      {{\ensuremath{\mathbb N}}}
\newcommand\nxl[1] {\\[#1mm]}
\newcommand\Nxl[1] {\\[-1.3em]\\[#1mm]}
\newcommand\oline[2]{\mathrm{c\!ut}_{#2}(#1)}
\def\omF           {\omega_F}
\def\one           {{\bf1}}
\def\One           {{\Delta_\ko}}
\def\op            {^{\mathrm{op}}}
\def\oti           {\,{\otimes}\,}
\def\otik          {\,{\otimes_\ko}\,}
\def\PhF           {\varPhi_{\!F}}
\def\Phinv         {\Phi_{\!F}^{-1}}
\def\piode         {\pi_0(\partial\surf_1)}
\def\piods         {\pi_0(\partial\surf)}
\def\piodsi        {\pi_0(\surfi)}
\def\piodso        {\pi_0(\surfo)}
\def\piv           {\pi}   %% pivotal structure
\def\pl            {\,{+}\,}
\def\qquand        {\qquad{\rm and}\qquad}
\def\slm           {\mbox{\sl m}}
\def\slurf         {\hspace*{-1pt}\mbox{\sl urf}}
\def\slz           {\ensuremath{\mathrm{SL}(2,\zet)}}
\def\Sqcup         {\mbox{\large$\cup$}}
\def\sse           {\scriptsize }
\def\sS            {S^\circ}
\def\sSne          {S^\circ_{n;\eps}}
\def\sss           {\scriptscriptstyle }
\def\surf          {{E}}
\def\Surf          {\ensuremath{\mathcal S\slurf}}
\def\SurfC         {\ensuremath{\mathcal S\slurf\ccc}}
\def\surfi         {{\partial_{\rm in}E}}
\def\surfo         {{\partial_{\rm out}E}}
\def\surfcm        {\ensuremath{(\surf,C,\mark)}}
\def\surfm         {{\ensuremath{(\surf,\mark)}}}
\def\surfmp        {\ensuremath{(\surf,\mark')}}
\def\surfn         {{\ensuremath{\surf,\mark}}}
\def\surfpm        {\ensuremath{(\surf',\mark')}}
\def\Times         {\,{\times}\,}
\def\To            {\,{\to}\,}
\def\U             {U}
\def\UC            {\U\cc} 
\def\Vect          {\ensuremath{\mathcal V\hspace*{-1pt}\mbox{\sl ect}}}
\def\vs            {{\mathrm v}} 
\def\vsms          {\Corr(\surf)}
\def\vsmsp         {\Corr(\surf')}
\def\wee           {^{\Wee}}
\def\Wee           {\eps}
\def\zet           {{\ensuremath{\mathbb Z}}}
\newtheorem{thm}{Theorem}
\newtheorem{lem}[thm]{Lemma}
\newtheorem{prop}[thm]{Proposition}
\newtheorem*{thmx}{Proposition \ref{prop-0} and Theorem \ref{thm-g}}
\theoremstyle{definition}
\newtheorem{defi}[thm]{Definition}
\newtheorem{rem}[thm]{Remark}
\begin{document}

\numberwithin{equation}{section}
\numberwithin{thm}{section}

%%%%%%%%%%%%%%%%%%%%%%%%%%%%%%%%%%%%%%%%%%%%%%%%%%%%%%%%%%%%%%%%%%%%%%%%

\thispagestyle{empty} 
\begin{flushright}
   {\sf ZMP-HH/16-5}\\
   {\sf Hamburger$\;$Beitr\"age$\;$zur$\;$Mathematik$\;$Nr.$\;$588}\\[2mm]
\end{flushright}
\vskip 3.0em
\begin{center}\Large
{\bf CONSISTENT SYSTEMS OF CORRELATORS IN \\[3pt]
   NON-SEMISIMPLE CONFORMAL FIELD THEORY}
\end{center}\vskip 2.2em

\begin{center}
  J\"urgen Fuchs $^{a}$
  ~~and~~ Christoph Schweigert $^b$
\end{center}
\vskip 9mm

\begin{center}
  $^a$ Teoretisk fysik, \ Karlstads Universitet
  \\Universitetsgatan 21, \ S\,--\,\,651\,88 Karlstad
 \\[7pt]
  $^b$ Fachbereich Mathematik, \ Universit\"at Hamburg\\
  Bereich Algebra und Zahlentheorie\\
  Bundesstra\ss e 55, \ D\,--\,20\,146\, Hamburg
\end{center}
\vskip 4.4em
\noindent{\sc Abstract}
\\[4pt]
Based on the modular functor associated with a -- not necessarily semisimple
-- finite non-de\-ge\-nerate ribbon category $\mathcal D$, we present a definition
of a consistent system of bulk field correlators for a conformal field theory which
comprises invariance under mapping class group actions and compatibility with the sewing
of surfaces.  We show that when restricting to surfaces of genus zero such systems 
are in bijection with commutative symmetric Frobenius algebras in $\mathcal D$,
while for surfaces of any genus they are in bijection with modular Frobenius algebras
in $\mathcal D$. This provides additional insight into structures familiar from rational 
conformal field theories and extends them to rigid logarithmic conformal field theories.

\newpage
%%%%%%%%%%%%%%%%%%%%%%%%%%%%%%%%%%%%%%%%%%%%%%%%%%%%%%%%%%%%%%%%%%%%%%%%%%%%%%%%%%

\section{Introduction and main result}

A crucial task in any quantum field theory is to establish the existence of a consistent
system of correlators of the fields of the theory. In two-dimensional conformal field 
theories these correlators are specific elements in suitable spaces of conformal blocks,
characterized by the fact that they satisfy various consistency conditions.
The spaces of conformal blocks of a conformal field theory can be explored from
several different mathematical points of view. The approach relevant to the present paper
describes them as \findim\ vector spaces that carry projective representations
of mapping class groups of surfaces with marked points and are compatible with the
sewing of surfaces. These vector spaces are constructed in terms of morphism spaces
of a braided monoidal category \D\ \cite{lyub11,BAki}. We refer to the data coming 
with the spaces of conformal blocks as the \emph{monodromy data} based on \D\
(they are also known as chiral data, or as Moore-Seiberg data).

Specifically, we consider local conformal field theories on closed oriented surfaces. For
these, the fields are called bulk fields, and a consistent choice of bulk fields provides an
object of \D, which we denote by $F$ and to which for brevity we refer as the \emph{bulk object}.
In this paper we give a precise mathematical realization of the notions of bulk object and 
of systems of correlators of bulk fields for a conformal field theory corresponding to a given 
category \D. A novelty of our approach is that \D\ does not have to be semisimple. 

That the conformal block spaces are \findim\ is a non-trivial and useful finiteness 
property. Let us mention that this property is satisfied for the categories \D\ that are
relevant to interesting classes of conformal field theories, including in particular all
rational conformal field theories, but also a large class of models for which \D\ is
non-semisimple. Because of the analytic properties of their conformal blocks, the latter 
models go under the name of \emph{logarithmic} conformal field theories (see e.g.\ 
\cite{gura6}). The fact that our approach does not require \D\ to be semisimple thus 
makes it relevant for important applications of logarithmic
conformal field theories like e.g.\ the study of critical dense polymers \cite{dupl2}.

\medskip

In this paper we develop a precise definition of the notion of consistency of a system
of bulk field correlators (see Definition \ref{def:Corr}). Then we prove

\begin{thmx}~\nxl2
{\rm (i)}\, Let \D\ be a finite ribbon category and $F$ an object of \D.
The consistent systems of gen\-us-ze\-ro bulk field correlators for monodromy data 
based on \D\ and with bulk object $F$ are in bijection with structures of a
commutative symmetric Frobenius algebra on $F$.
\nxl2
{\rm (ii)}\, Let \D\ be a modular finite ribbon category and $F$ an object of \D.
The consistent systems of bulk field correlators for monodromy data based on \D\ and
with bulk object $F$ are in bijection with structures of a modular Frobenius algebra
{\rm(}in the sense of Definition \rm{\ref{def:modFrob})} on $F$.
\end{thmx}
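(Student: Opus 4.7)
My plan is to establish each bijection by showing that a consistent system of correlators is uniquely determined by its values on a small set of generating surfaces, and that the resulting data on $F$ assemble precisely into the algebraic structure in question. For the forward direction of part (i) I evaluate correlators on low-order genus-zero surfaces to extract structure morphisms on~$F$: the sphere with one, two, and three marked points of label $F$ produces, respectively, a unit $\eta\colon\one\To F$, a pairing $\kappa\colon F\oti F\To\one$, and, after dualising one leg through $\kappa$, a multiplication $\mu\colon F\oti F\To F$. Non\-degeneracy of $\kappa$ follows from the non\-degeneracy of the conformal-block pairing built into the modular functor. Commutativity of $\mu$ and symmetry of $\kappa$ follow from the elements of the genus-zero mapping class group that interchange punctures; associativity and the Frobenius compatibility between $\mu$ and the induced coproduct $\Delta$ follow from sewing compatibility applied to the two standard pants decompositions of the four-punctured sphere; and the unit axiom follows from capping off a puncture of the three-punctured sphere.

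For the converse direction of (i), given a commutative symmetric Frobenius algebra $(F,\mu,\eta,\Delta,\eps)$, I define a correlator on an arbitrary marked genus-zero surface by choosing a pants decomposition, assigning to each pair of pants a copy of $\mu$, $\Delta$, $\eta$, or $\eps$ as dictated by the orientations of its boundary circles, and contracting along each cutting circle via the Frobenius pairing. Independence of the chosen pants decomposition reduces to the genus-zero Moore--Seiberg relations, which are generated by elementary moves whose corresponding algebraic identities are exactly associativity, commutativity, and the Frobenius axiom. This establishes the bijection of~(i).

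For part (ii) the extension to higher genus is controlled by Lyubashenko's coend $L\df\int^{X\iN\D}X^\vee\oti X$, which in a modular finite ribbon category carries a non\-degenerate Hopf pairing and whose endomorphism algebra projectively represents the genus-$g$ mapping class group for every $g$. In a pants decomposition of a genus-$g$ marked surface, each handle contributes a factor of $L$ at a cutting circle, to which I assign the canonical morphism out of $L$ induced, via its universal property, by the dinatural family built from the Frobenius structure on~$F$. The forward direction then extracts from the torus and higher-genus correlators an extra piece of data whose compatibility with Lyubashenko's $S$-transformation is forced by mapping class group invariance of the one-point torus correlator; this is precisely the axiom promoting the Frobenius algebra to a modular Frobenius algebra in the sense of Definition~\ref{def:modFrob}. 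Conversely, this axiom together with the genus-zero axioms suffices to prove that the above prescription is invariant under all Moore--Seiberg moves at arbitrary genus.

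The main obstacle is the higher-genus well\-definedness. Genus-zero moves are controlled by genus-zero algebra, but on a surface with handles one must additionally verify invariance under the $S$-move on an embedded one-handle torus and under handle slides. Translating these manifestly topological moves through Lyubashenko's coend formalism into algebraic identities on~$F$, and verifying that they are captured precisely by the modular Frobenius axiom, is the core of the argument and the key point at which the non\-semisimple setting deviates from the rational one: in the semisimple case, the coend $L$ reduces to a direct sum over isomorphism classes of simple objects and the modular condition is essentially automatic, whereas here it is a genuine additional constraint.
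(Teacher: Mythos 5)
Your overall strategy coincides with the paper's: extract the three basic morphisms (a vector in $\Hom_{\mathcal D}(\mathbf 1,F^{\otimes 3})$, a "counit" in $\Hom_{\mathcal D}(F,\mathbf 1)$ and a map $F\to F^\vee$) from the three-, one- and two-holed spheres; derive commutativity and symmetry from the Z- and B-moves, associativity and the Frobenius property from the A-move on the four-holed sphere, and the unit axiom from capping; and reduce the higher-genus consistency to invariance of the one-holed-torus correlator under the S-transformation on Lyubashenko's coend, which is exactly the modular Frobenius axiom of Definition \ref{def:modFrob}. The converse direction by running the moves backwards is also how the paper proceeds.

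There is, however, one genuine gap. Your claim that non-degeneracy of the two-point pairing ``follows from the non-degeneracy of the conformal-block pairing built into the modular functor'' is not correct. The modular functor supplies vector spaces and (projective) mapping class group actions; nothing forces the particular vector chosen as the correlator of the two-holed sphere to define a non-degenerate pairing $F\otimes F\to\mathbf 1$. For instance, the identically zero system of vectors is invariant under all mapping class groups and compatible with all sewings, yet corresponds to no Frobenius algebra, so without an extra hypothesis the asserted bijection fails. In the paper non-degeneracy is an explicit axiom in the definition of a consistent system (Definition \ref{def:Corr}: the endomorphism of $F$ induced by the cylinder correlator must be invertible), and even granting it one still has to argue: gluing two cylinders and using F-move invariance shows that this endomorphism is an \emph{idempotent}, hence equals $\id_F$, and only then does $\Phi_F\colon F\to F^\vee$ acquire a two-sided inverse (Lemma \ref{lem:selfdual-etc}). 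You need to either build this requirement into your notion of consistent system or supply that argument. A secondary omission concerns part (ii): since the mapping class groups act only projectively (the framing anomaly, visible in $(S^KT^K)^3=\zeta\,(S^K)^2$), ``invariance under the mapping class group'' is not well-posed until one passes to a central extension of the category of surfaces; your proposal acknowledges the projectivity but does not say how invariance of the torus correlator under the S-move is to be interpreted.
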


Consistency conditions for correlators have been discussed extensively in the
conformal field theory literature (see e.g.\ \cite{frsh2,sono2,lewe3}). They amount to
requiring that the correlator assigned to a surface is invariant under the action of 
the mapping class group of the surface and that upon sewing of surfaces, correlators
are mapped to correlators \cite{fffs3}. (In addition, a non-degeneracy requirement 
must be imposed on the two-point correlator on the sphere.) An insight on which the 
present paper builds is that these requirements can be implemented with the help of the  
structural morphisms of the coends that in the construction of \cite{lyub11}
afford the sewing of spaces of conformal blocks.

The notion of a modular finite ribbon category is recalled in Section \ref{ss:cat-coend};
the category $H$-mod of finite-dimensional modules over any finite-dimensional factorizable
ribbon Hopf algebra $H$ belongs to this class of categories \cite{fuSs3}.
Modular Frobenius algebras are introduced in Definition \ref{def:modFrob}; it is known
\cite{fuSs3} that in case \D\ is the enveloping category $\C \boti \C^{\rm rev}$
of a category $\C \eq H$-mod of the type just mentioned, every ribbon automorphism
of the identity functor of \C\ gives rise to a modular Frobenius algebra in \D.
  % the coend $\int^{U\in\C}\! U^\vee \boti \omega(U)$ 
We expect that this continues to hold for the enveloping category of any 
modular finite ribbon category \C, but the methods of \cite{fuSs3} which are based
on Hopf algebra technology are insufficient to show this. Also, even when restricting to
the case $\C \eq H$-mod it is not known whether those methods can be used to address the
compatibility of correlators with sewing, while they do allow one \cite{fuSs5}
to establish invariance under the action of mapping class groups.
For \D\ the enveloping category of a \emph{semisimple} mo\-du\-lar finite ribbon category
\C, modular Frobenius algebras in \D\ are obtained by applying a center construction
\cite{ffrs,davy20} to the special
symmetric Frobenius algebras in \C\ that were considered in \cite{fuRs4,fjfrs}.
In the present paper we rely on finiteness properties that are shared by these
examples, but we neither have to require semisimplicity nor equivalence to the
representation category of a Hopf algebra.

For the construction in \cite{fuRs4,fjfrs} it is necessary to invoke the full-fledged
three-dimen\-si\-o\-nal topological field theory obtained by the Re\-she\-tikhin-Turaev 
surgery construction. The approach taken in the present paper bypasses this by making use 
of the following ingredients:
First, exploiting a variant of the Lego-Teichm\"uller game \cite{haTh,baKir} allows 
us to express the consistency constraints in terms of a small set of basic correlators;
second, we consider correlators for the full bulk state space $F$ rather than separately
for subquotients of $F$ (in the non-semisimple case we are largely forced to do so, but
this proves to be advantageous even in the semisimple case);
and third, the insight that the sewing relations among such correlators
can be neatly formulated with the help of dinatural transformations for suitable coends.
An evident question to ask is whether also the present construction can be naturally
embedded in a framework provided by three-dimensional topological field theory. It 
is tempting to speculate that this can be achieved in a `holographic approach', using 
topological field theory on manifolds with boundaries. To this end it will be sufficient 
that the theory can be defined on three-manifolds that are sufficiently nicely behaved 
(in particular on cylinders over two-manifolds); this might be viable even when the 
underlying modular category \D\ is not semisimple. 
Pursuing such speculations is, however, beyond the scope of the present paper.

\medskip
 
Before giving more details, let us put our results into context. While quantum field 
theory is studied in many different frameworks, a minimal consensus would be close to
the following: To establish the existence of a quantum field theory or, at least, of the 
subsector of bulk fields of the theory, we must specify a vector space of such fields
as well as, for a suitable category of manifolds with marked points, correlators of
those fields. The correlators are required to be local in a suitable sense,
and correlators on different manifolds must fit together.

This is far too vague for being implementable in practice, but for classes of theories 
that enjoy suitable finiteness properties and share certain symmetries the situation is
more amenable. Symmetries strongly constrain the possible correlators: they give rise to
differential equations for them, so-called Ward identities \cite[Sect.\,3]{bepz}.
The spaces of solutions to those equations -- called spaces of conformal blocks in the 
case of conformal field theories -- are thus spaces of candidates for correlators. 
In the present paper we impose the finiteness condition that the spaces of solutions 
to the Ward identities, at any genus and for any number of field insertions, are 
finite-di\-men\-sional.
The correlators are particular vectors in these spaces. The relevant locality requirements 
and relations between correlators on different manifolds then strongly restrict the possible
choices of such vectors.

As we show in this paper, this idea can be fully realized in a large class of
two-dimensional conformal field theories, including non-semisimple theories. We do not 
work with vector spaces obtained as solutions to differential equations (but, prompted 
by that interpretation, still use the term monodromy data to refer to the corresponding
information). Instead, we obtain an equivalent system of vector spaces algebraically,
as morphism spaces of a suitable \ko-linear monoidal category \D, which is possible if
\D\ is a modular finite ribbon category. This class of categories includes all 
semisimple modular tensor categories, such as those for the rational conformal field 
theories of Wess-Zumino-Witten type, for which the differential equations obeyed by the
conformal blocks are known explicitly and include in particular the equation expressing 
the flatness of the Knizhnik-Zamolodchikov connection (see \cite{ETfk} for a review).
For our construction \D\ does, however, not have to be semisimple, and thus our
analysis covers theories well beyond the subclass of rational conformal field theories.
Also, albeit for us \D\ matters only as a category endowed with appropriate structure,
it is adequate to think of \D\ as the representation category of an algebraic structure
-- a conformal vertex algebra, or a conformal net of observables -- that formalizes 
the physical notion of a chiral algebra of a two-dimensional conformal field theory. 
It has been known for a while that rational conformal field theories provide
examples for semisimple modular finite ribbon categories \cite{huan21}.
On the other hand, beyond rational CFT much less is known. Specifically, while
there has been extensive work (see e.g.\ \cite{fgst4,huan29,tswo})
on classes of non-semisimple conformal field theories with vertex operator algebras that
are expected to give rise to modular finite ribbon categories, so far the 
presence of such a category has been fully established only for the
so-called symplectic fermion model \cite{gaRu2}.
The space $F$ of bulk fields we are looking for carries a representation of the chiral
algebra (or of the tensor product of two copies of the chiral algebra,
which in physics are sometimes called left- and right-movers, respectively) and is thus 
an object of the category \D. In this paper we identify the additional structure 
on $F$ that is necessary and sufficient for obtaining a consistent system of correlators.
The existence of such structure imposes restrictions on the category \D.   

\medskip

Let us now describe the contents of this paper more explicitly. We consider a symmetric 
monoidal category \Surf\ of oriented
smooth surfaces with boundary, with the monoidal structure given by disjoint union.
As morphisms of \Surf\ we take orientation preserving diffeomorphisms 
combined with sewings $\surf \,{\to}\,
\Sqcup\surf$, i.e.\ (compare Figure \ref{fig1}(ii) below) with gluings
of surfaces via identification of boundary circles. 
The boundary circles are the recipients of insertions of (incoming or outgoing)
bulk fields, which are described as an object $F$ of the category \D.
To each surface $E \iN \Surf$ we assign, in a two-step procedure, a 
bulk field correlator as a vector $\vs(\surf)$ in a vector space $\Bl(\surf)$.
The vector spaces $\Bl(\surf)$, which are the conformal block spaces, are obtained as
morphism spaces of the \ko-linear category \D; this first step is well known \cite{baKir,lyub11}, 
albeit (see Section \ref{sec2}) it still needs to be adapted to fit our purposes.

Each space $\Bl(\surf)$ carries a projective representation of the mapping class group of 
$\surf$, and the collection of these spaces has locality properties. The latter are encoded 
in linear maps between spaces of conformal blocks assigned to surfaces of different topology
that are related by sewing. The novel main part of our construction is then to find a system
of vectors $\vs(\surf) \iN \Bl(\surf)$ that are invariant under the mapping class group 
action and such that sewing $\surf \,{\to}\,
\Sqcup\surf$ transports $\vs(\surf)$ to
$\vs(\Sqcup\surf)$ -- provided that such a set of vectors exists at all, a requirement
that puts restrictions on the allowed categories \D\ and bulk objects $F$ in \D.
A crucial idea of the construction is to start from simple building pieces
not only for the spaces of conformal blocks, but correspondingly also for vectors in 
those spaces as building blocks of the correlators.

Our choice of morphisms of the category \Surf\ allows us
to treat compatibility with sewing and mapping class group invariance 
on the same footing. Concretely, we can describe the conformal blocks $\Bl(\surf)$ as 
the vector spaces that a symmetric monoidal functor $\Bl$ assigns to the objects of $\Surf$,
and a consistent system $\{\vs(\surf)\}$ of bulk field correlators as a 
monoidal natural transformation 
  $$
  \vs: \quad \One \Longrightarrow \Bl
  $$
between a constant functor $\One$, with value the ground field,
from \Surf\ to \Vect\ and the block functor $\Bl$.

At the level of the vector spaces of conformal blocks, locality can be implemented by 
using pair-of-pants decompositions of the objects $\surf\iN\Surf$. Moreover, such
decompositions also allow one to get control over the action of the mapping class group,
at the expense of endowing the surfaces with further structure, namely a certain embedded 
graph called a \emph{marking} \cite{baKir}.
With these extra structures of pair-of-pants decompositions and markings, we get 
a symmetric monoidal category \mSurf\ of \emph{marked surfaces} together with a
forgetful functor $U\!\colon \mSurf \To \Surf$. 
For the construction of the functor $\Bl$ it proves to be convenient to 
make explicit use of the extra structure of \mSurf, whereby at first one 
arrives at a similar symmetric monoidal functor $\mBl\colon \mSurf \To \Vect$.
(Actually, to account for the framing anomaly \cite{witt50}, one must work with 
central extensions of the categories \Surf\ and \mSurf, see Section \ref{ss:cext};
for brevity we suppress this issue in this introductory exposition.)

   \medskip

Our strategy for proving our main result is now the following: First, we construct the 
monoidal functor $\Bl$ as a right Kan extension (see e.g.\ \cite[Ch.\,1]{RIeh})
  $$ 
  \begin{tikzcd}
  \mSurf \ar{rr}{\mBl}[name=endNatTraf,below]{} \ar{d}[swap] {U} &~& \Vect
  \\
  \Surf \ar[dashed]{urr}[swap] {\Bl}
  \ar[Rightarrow,xshift=-5pt,to path=-- (endNatTraf) \tikztonodes]{}{}
  \end{tikzcd}
  $$ 
of $\mBl$ along the forgetful functor $U$. Hereby we can reduce the existence of a 
monoidal natural transformation $\vs\colon \One \,{\Rightarrow}\, \Bl$
to the existence of an analogous natural transformation
  $$
  \mvs: \quad \mOne \Longrightarrow \mBl
  $$
from a constant functor $\mOne\colon \mSurf \To \Vect$ to the functor $\mBl$. Second, we
establish necessary and sufficient conditions for the existence of such a natural transformation
$\mvs$. The latter is achieved with the help of the description of the morphisms of $\mSurf$
through generators and relations (a variant of the so-called Lego-Teichm\"uller game),
which in particular allows us to express the required restrictions on the bulk object
$F$ in terms of a triple of vectors in specific morphism spaces involving $F$, namely
in $\HomD(\one,F^{\otimes 3})$, $\HomD(F,\one)$ and $\HomD(F,F^\vee)$, respectively.
It turns out that once the problem is reformulated in terms of this triple of morphisms,
it can be solved, and in particular the required
additional structure of $F$ is expressible in terms of the chosen triple of morphisms.
It is in fact an old idea in conformal field theory that correlators on arbitrary surfaces 
are determined by a small set of correlators in low genus, subject to only a few
consistency constraints. Our results make this idea precise for theories that are 
not required to be semisimple.

\medskip

The rest of this paper is organized as follows.
Section \ref{sec2} is preparatory; it collects pertinent information about the 
Lego-Teichm\"uller game \cite{baKir} and about the construction of conformal blocks,
adapting the results of \cite{lyub11} such that they can be combined with the framework
of \cite{baKir} that was designed for finite semisimple categories. Specifically, we 
explain the notions of (extended) surfaces and marked surfaces and the groupoid of fine 
markings on a surface, and describe the conformal blocks for a finite ribbon category
\D\ as \ko-linear functors from tensor powers of \D\ to \Vect.
In Section \ref{sec3} we combine, for a choice of object $F \iN \D$, the conformal block 
functors for all surfaces to a monoidal functor $\BlF$ from (a central extension of) a 
symmetric monoidal category \Surf\ of surfaces with values in vector spaces.
Invariance of the correlators under the action of mapping class groups and compatibility
with sewing are then formulated as a monoidal natural transformation from a constant
functor $\One$ to the functor $\BlF$.  For arriving at this formulation we first work
with a monoidal functor $\mBlF$ having as domain a
category \mSurf\ of marked surfaces and then obtain $\BlF$ via a Kan extension.
Finally in Section \ref{sec4} we show that, subject to a non-degeneracy condition,
the existence of a monoidal natural transformation $\One\,{\Rightarrow}\,\BlF$ is
equivalent to $F$ having the properties stated in 
Proposition \ref{prop-0} and Theorem \ref{thm-g}.

%%%%%%%%%%%%%%%%%%%%%%%%%%%%%%%%%%%%%%%%%%%%%%%%%%%%%%%%%%%%%%%%%%%%%%%%%%%%%%%%%%

\section{Conformal blocks}\label{sec2}

In this section we collect pertinent background information on marked surfaces,
finite ribbon categories and conformal blocks.
The expert reader may wish to have just a quick glance at this part, e.g.\ at
Definitions \ref{def:xtsurf}, \ref{def:standardmarking} and \ref{def:fine},
at the figures \ref{fig1} and \ref{fig2}, at the list of elementary moves 
(M1)\,--\,(M5) in Section \ref{sec:fine}, and at the formulas \erf{eqdef:mBl-g}
and \erf{defBlgn} for the conformal blocks. It should be appreciated that
\erf{eqdef:mBl-g} and \erf{defBlgn} do not require semisimplicity.

\subsection{Marked surfaces}

By a \emph{surface} $\surf$ we mean a compact oriented smooth two-dimensional manifold,
possibly disconnected and possibly with boundary. We endow each connected component
$\alpha$ of the boundary $\partial\surf$ with the structure
of an oriented 1-manifold. If the 1-orientation on $\alpha$ coincides with the one induced
by the 2-ori\-en\-tation of $\surf$, we refer to $\alpha$ as an \emph{outgoing} boundary
circle, otherwise as an \emph{incoming} one, and denote the union of all outgoing and
all incoming boundary circles by $\surfo$ and $\surfi$. respectively.
We call $(\surf,\surfi,\surfo)$ a \emph{surface with oriented boundary}.
The surfaces of our interest are endowed with additional structure on the boundary
(compare \Cite{Def.\,2.1}{baKir} for surfaces with non-oriented boundary circles):

\begin{defi}\label{def:xtsurf}
An \emph{extended surface} $\surf \eq (\surf,\surfi,\surfo,\{p_\alpha\})$ is
a surface $(\surf,\surfi,\surfo)$ with oriented boundary
together with a marked point $p_\alpha$ on each connected component of $\partial\surf$.
\end{defi}

\begin{defi}\label{def:maps}
The \emph{mapping class group} \maps\ of an extended surface $\surf$ is the
group of homotopy classes of orientation preserving diffeomorphisms $\surf \To \surf$ 
that map $\surfi$ to $\surfi$ (and thus $\surfo$ to $\surfo$) 
and map marked points to marked points.
\end{defi}

The \emph{genus} $g(\surf)$ of an extended surface is the genus of the closed 
surface that is obtained from $\surf$ by gluing disks to all boundary components.
An extended surface can be glued, or sewn: for $(\alpha,\beta) \iN \piodsi \Times \piodso$,
the sewn surface $\Sqcup_{\alpha,\beta}\surf$ is the extended surface obtained by 
identifying the boundary components $\alpha$ and $\beta$ in such a way that their 
orientations match and that their marked points get identified.
(In principle we must glue along collars for $\Sqcup_{\alpha,\beta}\surf$ to be smooth;
but for our purposes this is inessential, compare e.g.\ \Cite{Thm.\,1.3.12}{KOck}.)

A \emph{cut} on an extended surface $\surf$ is a smooth oriented simple closed curve in the 
interior of $\surf$ together with a distinguished point on the curve.
A \emph{cut system} \cite{haTh} on $\surf$ is a finite union
$C$ of disjoint cuts, such that each connected component of $\surf \,{\setminus}\, C$ has 
genus 0.  Given a cut system $C$, let $\oline{\surf}{C}$ denote the closed manifold obtained
from the disjoint union of all components of $\surf \,{\setminus}\, C$ by suitably
adding two copies of each cut in $C$. This manifold acquires the structure of
an extended surface by endowing the two components of $\partial \oline{\surf}{C}$
that come from a cut $c \iN C$ with the 1-ori\-en\-tation of $c$
and by taking as marked points on them the points
that come from the distinguished point of $c$. This is illustrated in Figure \ref{fig1}.
Note that cutting and sewing are inverse operations:
for any cut $c$ on $\surf$, sewing the cut surface $\oline{\surf}{\{c\}}$ 
along the two boundary components stemming from the cut $c$ gives back $\surf$.
 
\begin{center}
\begin{figure}[tb]
\begin{picture}(300,109)(0,0)
  \put(0,10)   {(i)}
  \put(0,1)    {\scalebox{.26} {\includegraphics{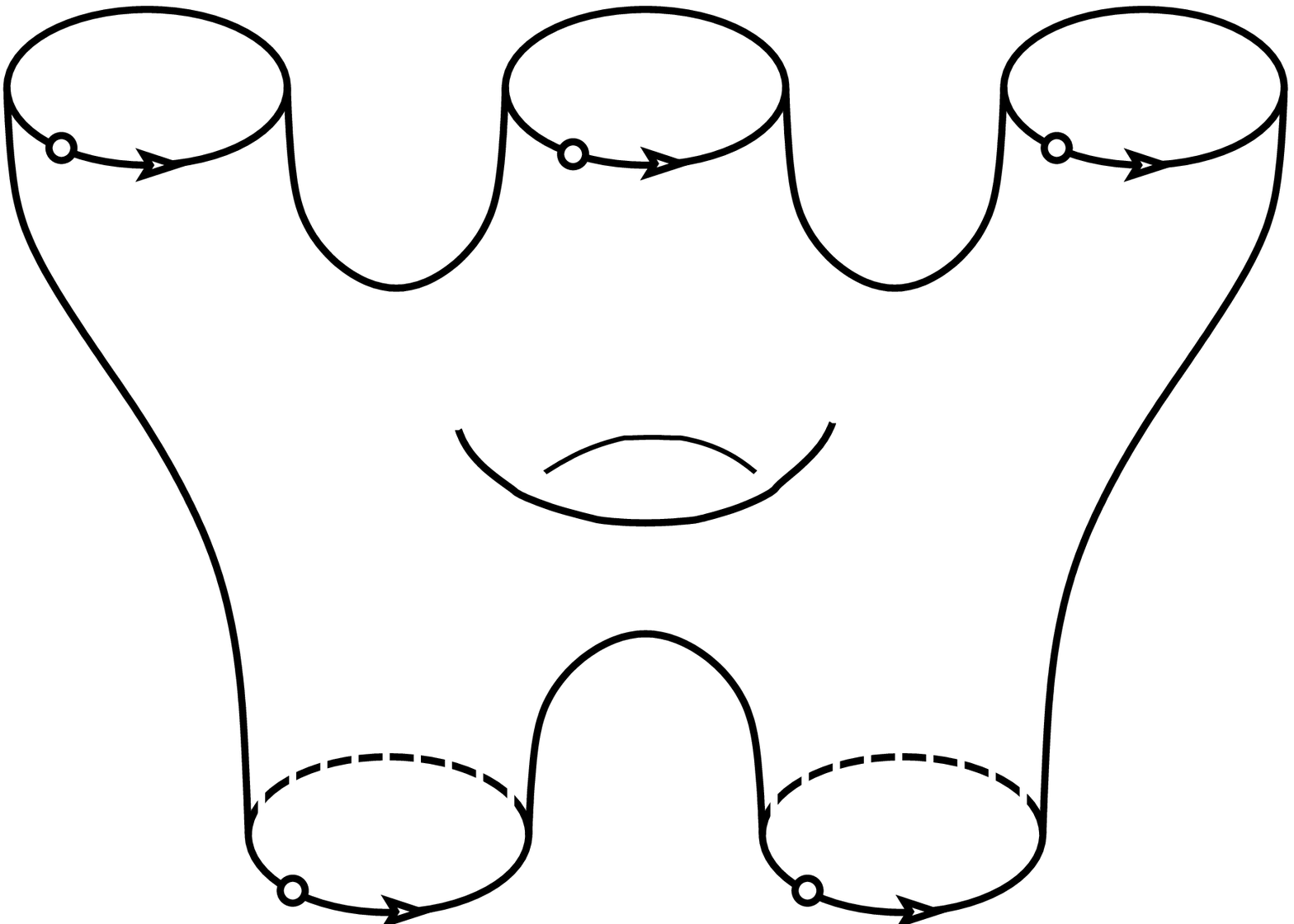}} }
  \put(166,10) {(ii)}
  \put(171,1)  {\scalebox{.26} {\includegraphics{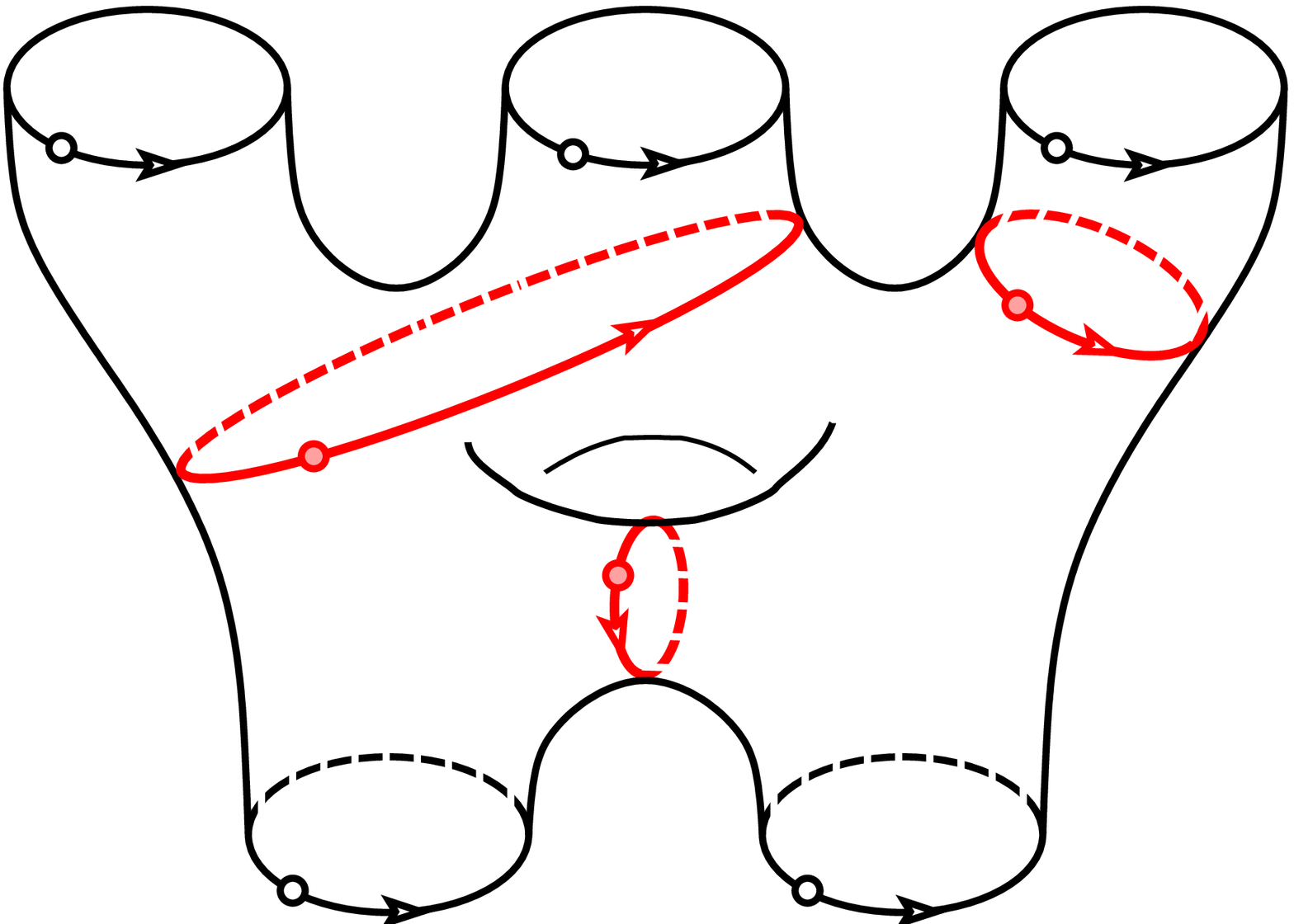}} }
  \put(305,37) {$ \xrightarrow{~~{\displaystyle\rm cut}~~} $}
  \put(303,23) {$ \xleftarrow[~~\raisebox{4pt}{sew}~~]{} $}
  \put(332,-6) {\scalebox{.26} {\includegraphics{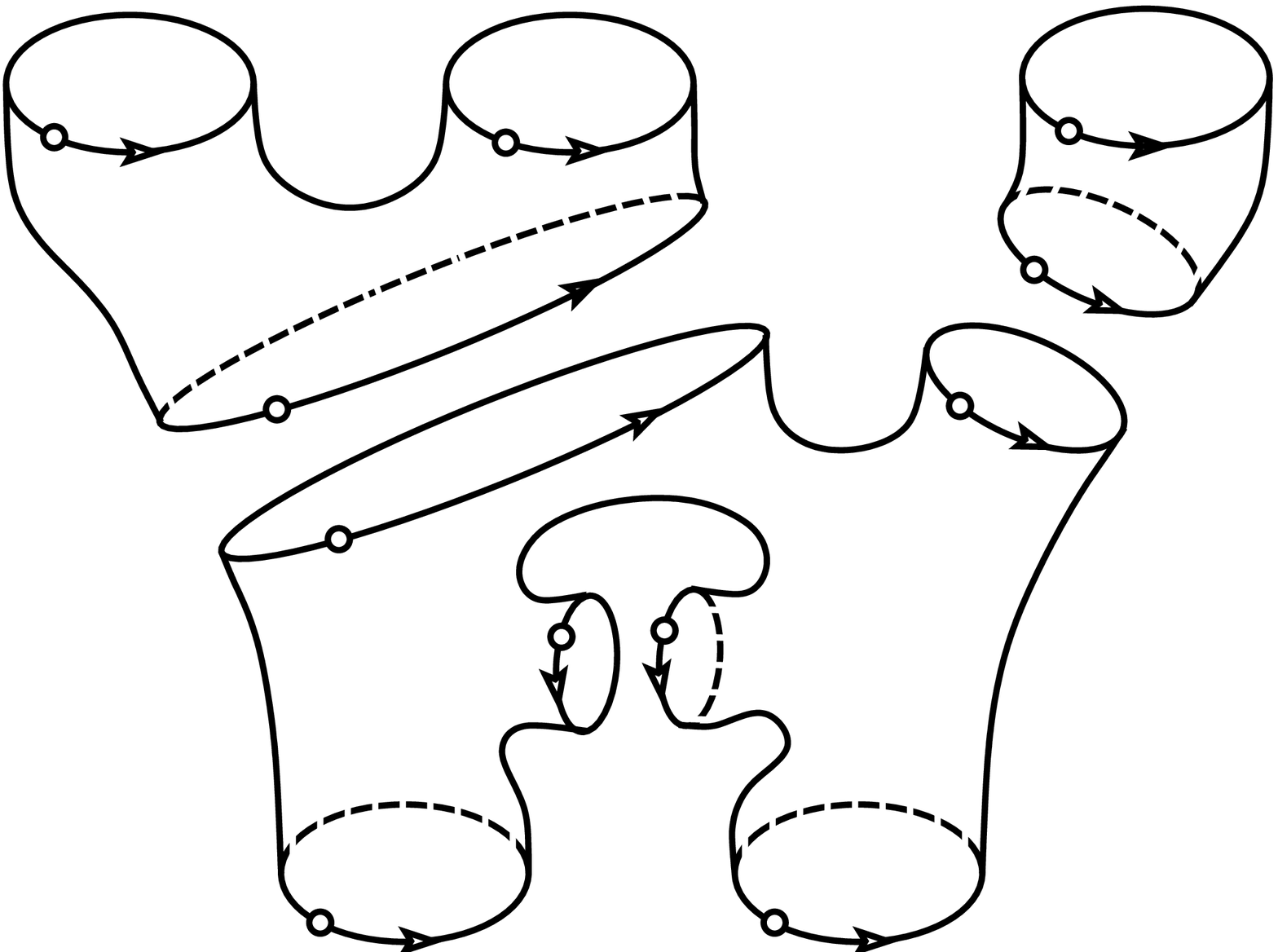}} }
\end{picture}
\caption{(i) An extended surface $\surf$ of genus 1 with three outgoing and two incoming 
  boundary circles.\,
  (ii) A (non-fine) cut system $C$ on $\surf$ (left) and
  the cut surface $\oline{\surf}{C}$, having spheres with 3, 2 and 6 boundary
  circles as connected components (right). \label{fig1}}
\end{figure}
\end{center}

We need to refine this notion of surface further. Similarly as in \Cite{Sect.\,2.3}{baKir} 
we first introduce specific reference surfaces.
For $n \iN \naturals$ and $\eps \eq (\eps_1,\eps_2,...\,,\eps_n) \iN \{\pm1{\}}^n$,
let the \emph{standard sphere} $\sSne$ be the extended surface obtained by removing
from the Riemann sphere $\complex\,{\cup}\,\{\infty\}$ with standard orientation 
$n$ open disks $D_{\alpha}$ of radius $1/3$ centered at the points $1,2,...\,,n$,
with the component $(\partial\sSne)_\alpha \eq \partial D_{\alpha}$ of the boundary
outgoing if $\eps_\alpha \eq {+}1$ and 
incoming if $\eps_\alpha \eq {-}1$, and with the marked points being
$\{ \alpha \,{-}\,\ii/3 \,|\, \alpha \eq 1,2,...\,,n \}$.
The \emph{standard marking} $\markstd_{\!n}$ on $\sSne$ is the following graph on $\sSne$:
The vertices are the marked points $\alpha\,{-}\,\ii/3$, for $\alpha \eq 1,2,,...\,,n$,
together with a vertex at $-2\ii\iN \sSne$, called the \emph{internal vertex}; the
edges are the $n$ straight lines $e_\alpha$ connecting the marked points $\alpha$ with
the internal vertex. The set of edges of $\markstd_{\!n}$ is ordered according to the
standard order on $\naturals$; the edge connecting the internal vertex to the left-most
marked point $1\,{-}\,\ii/3$ is called the \emph{distinguished edge}
and denoted by $\edist$, and $1\,{-}\,\ii/3$ is called the \emph{distinguished vertex}.
We can now give

\begin{defi} \Cite{Def.\,3.3\,\&\,Sect.\,3.6}{baKir}
\label{def:standardmarking} 
~\\[2pt]
{\rm (i)}\, A \emph{marking without cuts} on a connected extended surface $\surf$ of genus 
zero is a graph $\mark$ on $\surf$ for which there exists 
an orientation preserving diffeomorphism $\varphi\colon \surf \To \sSne$, for some
appropriate $n \iN \naturals$ and $\eps \iN \{\pm1\}^n_{}$,
such that $\mark \eq \varphi^{-1}(\markstd_{\!n})$.
\\[2pt]
{\rm (ii)}\, A \emph{marked surface} \surfcm\ is an extended surface $\surf$
endowed with the structure of a \emph{marking}, i.e.\ with a cut system $C$ and a graph 
$\mark$ that provide a marking without cuts on every connected component of $\oline\surf{C}$.
\\[2pt]
{\rm (iii)}\, Two markings $(C,\mark)$ and $(C',\mark')$ on $\surf$ are called
\emph{isotopic} iff there exists an isotopy $f\colon \surf\Times[0,1] \To \surf$ such that 
$(f(C,t),$\linebreak[0]$f(\mark,t))$ furnishes a marking on $\surf$ for every $t \iN [0,1]$
as well as $f(-,0) \eq \id_\surf$ and $f(C,1) \eq C'$, $f(\mark,1) \eq \mark'$.
\\[2pt]
{\rm (iv)}\, For $(\alpha,\beta) \iN \piodsi \Times \piodso$, the \emph{sewn surface}
$(\surf',C',\mark') \eq \Sqcup_{\alpha,\beta}(\surf,C,\mark)$ is the mar\-ked surface
with underlying surface $\surf' \eq \Sqcup_{\alpha,\beta}\surf$ whose cut system $C'$
is given by the union of the cut system $C$ of $(\surf,C,\mark)$ and the image of
$\alpha$ and $\beta$, and whose graph $\mark'$ is
obtained from $\mark$ by gluing and by taking the image of the marked point on
$\alpha$ and $\beta$ as an additional vertex (compare \Cite{Fig.\,6}{baKir}).
\end{defi}

In the sequel we often suppress the cut system in our notation and just write
\surfm\ for a marked surface, and refer to an isotopy class of markings just as a marking.
 % If a marked surface \surfm\ is glued, we endow the glued surface $\Sqcup_{\alpha,\beta}\surf$
 % with the marking given by the cut system $C_\surf \,{\sqcup}\, \{\alpha_\cup\}$, where
 % $\alpha_\cup$ is the common image of $\alpha$ and $\beta$, and the graph
 % obtained from $\mark$ by identifying the marked points on $\alpha$ and $\beta$.
 % 
It will be sufficient to work with the subclass of \emph{fine} markings:

\begin{defi}\label{def:fine}
A \emph{\fine\ cut system} on a surface $\surf$ is a cut system $C$ for which
every connected component of $\oline\surf{C}$ is a sphere with at most three holes.
A \emph{\fine\ marking} $(C,\mark)$ of $\surf$ is a marking for which 
the cut system $C$ is \fine.
\end{defi}

As an illustration, Figure \ref{fig2} 
shows a fine cut system and a fine marking on the surface from Figure \ref{fig1}.

\begin{center}
\begin{figure}[tbh]
\begin{picture}(240,91)(0,0)
  \put(70,10)  {(i)}
  \put(70,1)   {\scalebox{.26} {\includegraphics{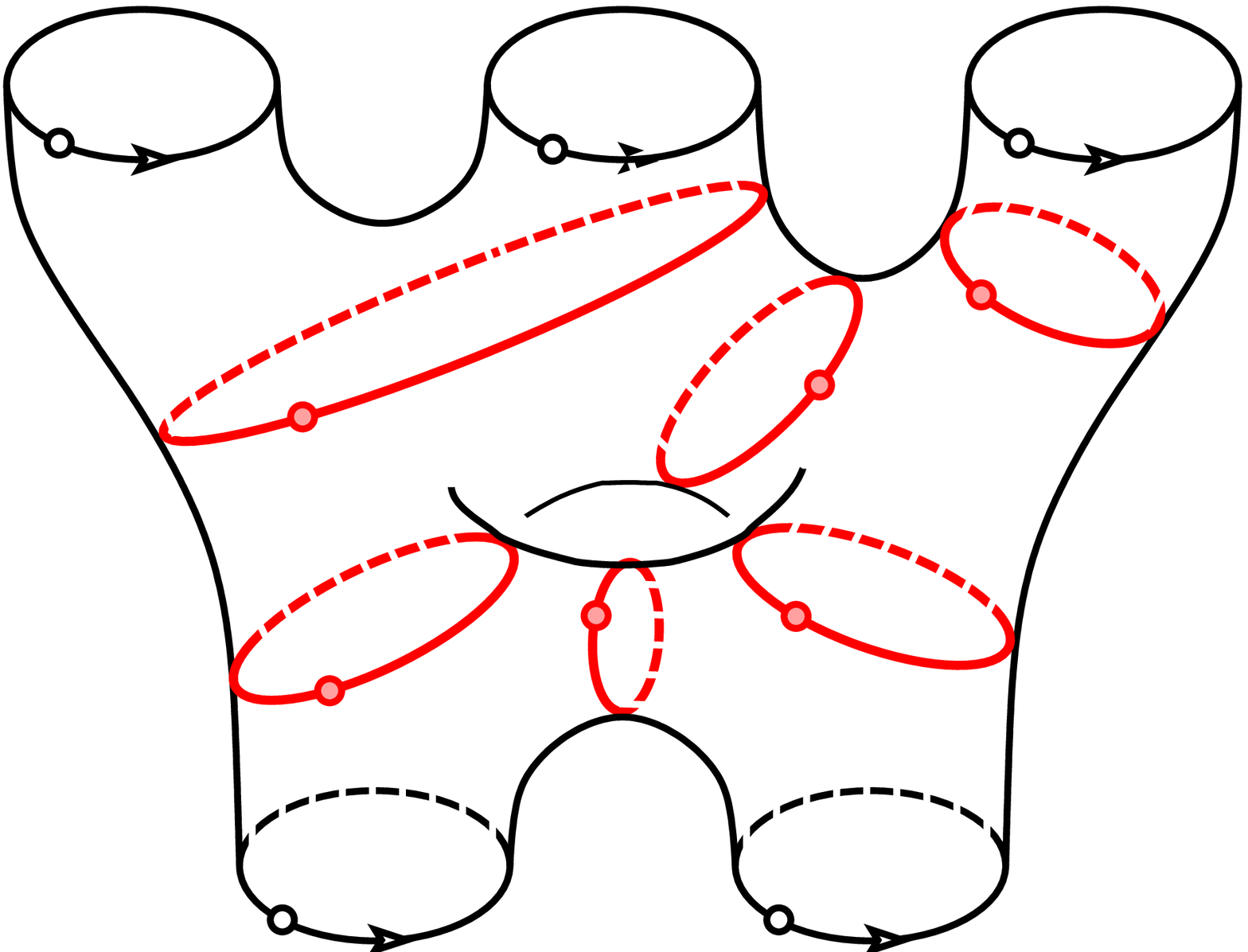}} }
  \put(266,10) {(ii)}
  \put(269,1)  {\scalebox{.26} {\includegraphics{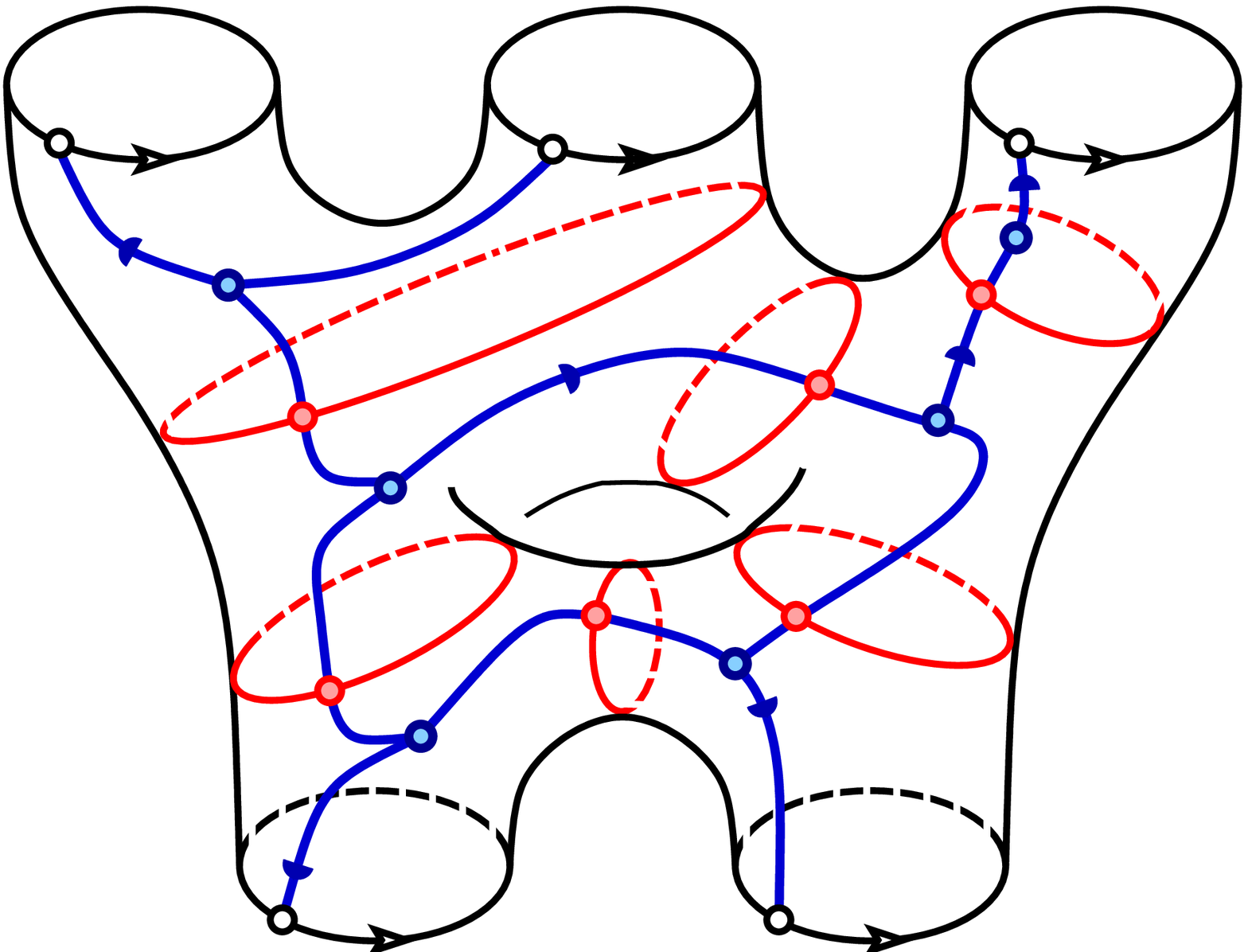}} }
\end{picture}
\caption{(i) A fine cut system $C$ on the surface $\surf$ from Figure \ref{fig1}.
  (The resulting cut surface is the disjoint union of one 2-holed and five 3-holed
  spheres.)\,
  (ii)\, A fine marking $(C,\mark)$ on $\surf$. The distinguished edge of the 
  restriction of the graph $\mark$ to each connected component of the cut surface is 
  accentuated by a small triangular flag.
  (For better readability, the 1-orientation of the cuts is suppressed.)
  \label{fig2}}
\end{figure}
\end{center}

%%%%%%%%%%%%%%%%%%%%%%%%%%%%%%%%%%%%%%%%%%%%%%%%%%%%%%%%%%%%%%%%%%%%%%%%%%%%%%%%%%

\subsection{Fine markings}\label{sec:fine}

To any extended surface $\surf$ one can associate a groupoid describing the set of
isotopy classes of markings on $\surf$ and their relations \cite{haLS,fuGel,baKir}.
Equivalent to this groupoid is a groupoid that we denote by $\CWm(\surf)$.
The objects of $\CWm(\surf)$ are the (isotopy classes of) fine markings on $\surf$, and  
its morphisms are (classes of) finite sequences of moves that 
change a fine marking of $\surf$ to another one. A geometric realization of $\CWm(\surf)$ 
is furnished by a graph with vertices given by the objects of $\CWm(\surf)$. This graph
is connected and simply connected, and accordingly $\CWm(\surf)$ can be presented by 
generators and relations (see \Cite{Thm.\,5.1}{baKir} and \cite{bePi}).
The generators, to be called elementary moves, of $\CWm(\surf)$ and the relations 
satisfied by them 
can be taken as follows \Cite{Sect.\,5}{baKir}. There are five types of elementary moves:
 \def\leftmargini{2.88em}~\\[-1.55em]\begin{itemize}\addtolength{\itemsep}{-5pt}
  \itm{1}
The \emph{Z-move} \mZ\ of a two- or three-holed sphere $\surf$ without cuts. 
This move maps the graph $\mark$ on $\surf$ to the graph $\mark'$ that coincides with $\mark$
as an unordered graph and whose distinguished edge is the one
adjacent to the distinguished edge of $\mark$ in clockwise direction,
 \footnote{~Our convention differs from the one in \cite{baKir}, where the 
 distinguished edge is changed in counter-clockwise direction instead.} 
 % see figure 8 there - check all occurrences of Z below.
and which keeps the cyclic ordering of the edges.
  \itm{2}
The \emph{B-move} \mB\ of a sphere $\surf$ with three holes and without cuts 
\Cite{Fig.\,10}{baKir}. For the case that $\surf \eq \sS_3$ is the 3-holed
standard sphere with standard marking, the move $\mB$ results in a marking that 
can also be obtained by performing the following braiding diffeomorphism: move the
boundary circles centered at $1 \iN \complex$ and $2 \iN \complex$ by an angle $\pi$
clockwise around $\frac32\iN \complex$ such that each of them is mapped to the
previous position of the other one, while the third boundary circle is kept in place.
  % compare also page 21 of \cite{barm3}.
 %
  \itm{3}
The \emph{F-move} \mF\ of a sphere $\surf \eq (\surf,\{c\},\mark)$ with at most three
holes, with a single cut $c$ such that one of the two components of $\oline\surf{\{c\}}$ 
is a sphere with one or two holes, and a graph $\mark$ such that the edges of $\mark$
ending at the distinguished point $p_c \iN c$ are the distinguished edge on
one of the components of $\oline\surf{\{c\}}$ and the `last' edge on the other component. 
$\mF \,{\equiv}\, \mF_c$ removes the cut $c$ and contracts the edges of $\mark$ ending at
$p_c$ to a point (compare \Cite{Fig.\,9}{baKir}).
  \itm{4}
The \emph{A-move} \mA\ of a sphere $\surf$ with four holes and a single cut $c$ and a
graph $\mark$ such that cutting along $c$ gives two 3-holed spheres and that 
the edges of $\mark$ ending at the distinguished point of $c$
are the `last' edges of the graphs on both components of $\oline\surf{\{c\}}$.
$\mA \,{\equiv}\, \mA_c \,{\equiv}\, \mA_{c,c'}$ replaces $c$ by another cut $c'$ 
not isotopic to $c$ such that $\oline\surf{\{c'\}}$ consists of two three-holed spheres;
for details see \Cite{Fig.\,20}{baKir}.
  \itm{5}
The \emph{S-move} \mS\ of a one-holed torus $T$ with a single cut. 
$\mS\,{\equiv}\,\mS_{c_1,c_2}$ maps the marking
$(\{c_1\},\mark_{\!1})$ of $T$ to $(\{c_2\},\mark_{\!2})$, with $\{c_1,c_2\}$
a symplectic homology basis of $H_1(T,\zet)$, and with 
$\mark_{\!1}$ and $\mark_{\!2}$ graphs having a common single vertex in the
interior of $T$ and two edges $\{\edist,e'_1\}$ and $\{\edist,e'_2\}$,
respectively. The common distinguished edge $\edist$ connects
the interior vertex with the boundary circle, while $e'_1$ and $e'_2$ are loops
homotopic to $c_2$ and $c_1$, respectively \Cite{Fig.\,16}{baKir}.
\end{itemize}

\noindent
And the relations among these moves can be taken to be the following
equalities between isotopy classes of compositions of moves
(throughout, as in \cite{baKir}, some easily 
reconstructed intermediate Z-moves are omitted in order to improve readability):
 \def\leftmargini{2.98em}~\\[-1.11em]\begin{itemize}\addtolength{\itemsep}{-5pt}
  \ite{1}
Commutativity of moves in different connected components of \surf.
  \ite{2}
The cylinder axiom: Given the standard cylinder $S\eq (\sS_{1,1},\emptyset,\markstd_{\!1,1})$
with standard marking and with one incoming and one outgoing boundary component, 
a gluing $\Sqcup_{\gamma,\beta}$ of $S$ to a surface $(\surf,C,\mark)$ (with 
$\beta \iN \pi_0(\partial S)$ and $\gamma \iN \pi_0(\partial\surf)$)
and any move $\mathrm m\colon (\surf,\mark) \To (\surf,\mark')$, one has
   % (4.4) of \cite{baKir}
  $$
  \psi \circ \mF_\gamma \circ (\mathrm m \,{\cup_{\gamma,\beta}}\, \id)
  = \mathrm m \circ \psi \circ \mF_\gamma
  $$
with $\psi$ a morphism which amounts to a compression
of $\surfm \,{\cup_{\gamma,\beta}}\, (\sS_{1,1},\emptyset,\markstd_{\!1,1})$ to \surfm\
(compare \Cite{Fig.\,12}{baKir}).
  \ite{3}
For any sphere with $n \iN \{2,3\}$ holes, the Z-move obeys $\mZ^n_{} \eq \id$.
  \ite{4}
Compatibility of F- and Z-moves: For an F-move of a surface $\surf$ with a single cut $c$ 
such that $\oline\surf{\{c\}} \eq \surf_1 \,{\sqcup}\, \surf_2$ with $\surf_1$
the component containing the distinguished edge ending on $c$ and $n_1 \,{:=}\, |\piode|$
one has $\mZ^{1-n_1} \cir \mF \eq \mF \cir (\mZ \,{\sqcup}\, \mZ^{-1})$.
  \ite{5}
Compatibility of B- and Z-moves: For $(\surf,\emptyset,\mark)$ a
cylinder with a marking without cuts one has $ \mZ \cir \mB \eq \mB \cir \mZ $.
  \ite{6}
Commutativity of F-moves involving a cylinder: If $(C,\mark)$ is a
\fine\ marking on $\surf$ with cut system $C \eq \{c,d\}$ such that one of the 
components of $\oline\surf{C}$ is a cylinder, then $\mF_c \cir \mF_d \eq \mF_d \cir \mF_c$.
  \ite{7}
Involutivity of the A-move: $\mA^2 \eq \id$.
  \ite{8}
The triangle axiom: For a marking of a $3$-holed sphere with cut system $C \eq \{c,d\}$
such that when cutting along $d$ one of the resulting connected components is 
a one-holed sphere, one has $\mF_{c'} \cir \mF_d \cir \mA \eq \mF_c \cir \mF_d$,
where $c'$ is the cut created by the A-move (for details see \Cite{Fig.\,29}{baKir}).
  \ite{9}
The pentagon axiom for the A-move, an analogue of the pentagon identity for 
the associator of a monoidal category: For a fine marking $(C,\mark)$ of a $5$-holed 
sphere with $C \eq \{c,d\}$ and $\mark$ a multiperipheral graph, one has
$ \mA_{c'} \cir \mA_d \cir \mA_c \eq \mA_c \cir \mA_d $, where
$c'$ is the cut that is created by the A-move $\mA_c$ (compare \Cite{Fig.\,30}{baKir}).
  \ite{10}
Two hexagon axioms for the B- and A-moves, analogues of the hexagon identities for 
the braiding and associator of a braided monoidal category: For $S$ 
a sphere with four holes, labeled $\alpha$, $\beta$, $\gamma$ and $\delta$, and
a fine marking $(\{c\},\mark)$ on $S$ 
with $\mark$ a multiperipheral graph whose distinguished edges end on the 
boundary component $\alpha$ and on the cut $c$, respectively, one has
$ \mB_{\alpha,\gamma} \cir \mA_c \cir \mB_{\alpha,\beta} \eq \mA_{c'} \cir \mB_{\alpha,c'}
\cir \mA_c$ together with the equality obtained by replacing all B-moves by their
inverses, where again $c'$ is the cut created by the A-move $\mA_c$
(compare \Cite{Fig.\,31}{baKir}).
  \ite{11}
The first of the two \slz-relations for a $1$-holed torus $T$: For any marking with 
a single cut $c$ on $T$ one has $\mB_{c,\alpha} \cir \mZ \eq \mS^2$,
where $\alpha$ is the boundary circle of $T$ and $\mT_c$ is
the \emph{Dehn move} around the cut $c$, i.e.\
is a specific composition \Cite{Ex.\,4.15\,\&\,4.17}{baKir} of B-, Z- and F-moves.
  \ite{12}
The second of the \slz-relations for a $1$-holed torus $T$: With the same notations as
in (W11) one has $(\mS \cir \mT_c)^3_{} \eq \mS^2$.
  \ite{13}
For a $2$-holed torus $T$ with boundary circles $\alpha$ and $\beta$ and a specific
marking on $T$ with cut system consisting of two cuts $c$ and $d$ (for details see 
\Cite{App.\,B}{baKir}), the equality
$ \mZ \cir \mB_{\alpha,\beta}^{} \cir \mA_{c,c'}^{} \cir \mA_{d,d'}^{}
\eq \mS^{-1}_{c'',d'} \cir \mA_{d'',c'}^{} \cir \mT_{c''}^{} \cir \mT_{d''}^{-1}
\cir \mA_{d,d''}^{} \cir \mS^{-1}_{c,c''} $.
 % note again the inverse convention for the Z-move.
\end{itemize}

%%%%%%%%%%%%%%%%%%%%%%%%%%%%%%%%%%%%%%%%%%%%%%%%%%%%%%%%%%%%%%%%%%%%%%%%%%%%%%%%%%

\subsection{Finite ribbon categories and coends}\label{ss:cat-coend}

Let \ko\ denote an algebraically closed field, and \Vect\ the category of \findim\
vector spaces over \ko. A finite tensor category \cite{etos} is a \ko-linear abelian
rigid monoidal category such that all morphism spaces are \findim\ over \ko,
there are up to isomorphism finitely many simple objects, each of them has a 
projective cover, every object has finite length, and the monoidal unit $\one$ is simple.
A ribbon category is a rigid braided monoidal category endowed with a
compatible twist (or balancing) or, equivalently, a braided pivotal category.
By a \emph{finite ribbon category} we mean a finite tensor category that is also ribbon.
 
Given a functor $F\colon\, \mathcal A \,{\times}\, \mathcal A^{\mathrm{op}} \To \mathcal B$,
a dinatural transformation from $F$ to an object $B \iN \mathcal B$ is a family of morphisms
$\varphi_X\colon F(X,X)\To B$ for $X \iN \mathcal A$ such that
$\varphi_Y \cir F(Y,g) \eq \varphi_X \cir F(g,X)$ for all $g\,{\in}\,\Hom_{\mathcal A}(X,Y)$.
A \emph{coend} $(C,\iota)$ for $F$ is an object $C \iN \mathcal B$
with a dinatural transformation $\iota$ that is universal among all dinatural 
transformations from $F$ to an object of $\mathcal B$ in the sense that for any such
family $\varphi$ there exists a unique morphism
$\kappa$ obeying $\varphi_X \,{=}\, \kappa \,{\circ}\, \iota_X$ for all $X \iN \mathcal A$.
Both the pair $(C,\iota)$ and the object $C$ are denoted by $\int^{X\in\mathcal A}\! F(X,X)$.
For any finite tensor category \D\ the coend
  \be
  K := \int^{X\in\D}\!\! X \oti X^\vee
  \labl{K}
exists and has a natural structure of a coalgebra in \D. If \D\ is in addition braided, 
then \cite{lyub8} $K$ carries a 
natural structure of a Hopf algebra  endowed with a non-zero left integral as well as 
with a Hopf pairing $\varpi_K$. A finite ribbon category \D\ is called \emph{modular} 
iff the pairing $\varpi_K$ is non-degenerate \cite{KEly}. Modularity of \D\ is in particular
equivalent \cite{shimi10} to the property that the functor from the enveloping category 
$\D \boti \D^{\rm rev}$ to the center $\mathcal Z(\D)$ that maps $X \boti Y$ to $X\oti Y$ 
endowed with half-braiding
$ (c^{}_{X,-} \,{\otimes}\, \id_Y^{}) \,{\circ}\, (\id_X^{} \,{\otimes}\, c^{-1}_{-,Y}) $
is a braided equivalence.
If \D\ is modular, then the integral of $K$ is two-sided.
A semisimple modular finite ribbon category is the same as a modular tensor category
in the conventional (see e.g.\ \cite{BAki}) sense.

Various other coends exist in the situation of our interest as well.
For \ko-linear categories we have \Cite{Lemma\,B.1}{lyub11} 
  % Here (and also for the next coend) the dinatural family is not yet given explicitly.
  \be
  \int^{Y\in\D}\!\! G(Y) \otik \HomD(Y,U) = G(U)
  \labl{deltaHom}
for any left exact \ko-linear functor $G\colon \D\To\Vect$.  The component $i_Y$ of the
dinatural family of this coend is the linear map $g \oti h \,{\mapsto}\, G(h) (g)$.
An important statement about coends, which will allow us to relate different sequences of
sewings that result in one and the same surface, is that the order of iterated coends
can be interchanged. This is known as the \emph{Fubini theorem} \cite[Ch.\,IX.7]{MAcl}
for coends: Given a functor $F\colon \mathcal A \,{\times}\, \mathcal A^{\mathrm{op}}
\,{\times}\, \mathcal B \,{\times}\, \mathcal B^{\mathrm{op}} \,{\to}\, \mathcal E$ 
for which the coends $\int^{U\in\mathcal A}\! F(U,U,Y,Z)$ and
$\int^{X\in\mathcal B}\! F(V,W,X,X)$ exist for all $Y,Z\,{\in}\,\mathcal B$ and all
$V,W\,{\in}\,\mathcal A$, respectively, there are unique isomorphisms
  \be
  \bearll \dsty
  \int^{U\in\mathcal A}\! \Big( \int^{X\in\mathcal B}\!\! F(U,U,X,X) \Big) \!\!&\dsty
  \cong \int^{U\times X\,\in\,\mathcal A\times \mathcal B}\!\! F(U,U,X,X) 
  \Nxl1 &\dsty
  \cong \int^{X\in\mathcal B}\! \Big( \int^{U\in\mathcal A}\!\! F(U,U,X,X) \Big) 
  \eear
  \labl{eq:Fubini}
of coends.

In the context of conformal field theory we deal with coends with values in categories of 
functors, since conformal blocks are functors from some Deligne power of $\D$ to \Vect.\,%
  \footnote{~Recall \cite[Sect.\,5]{deli} that the Deligne product of two finite linear
  categories $\mathcal A$ and $\mathcal B$ is a finite linear category
  $\mathcal A \boti \mathcal B$ with the universal property that every right exact bilinear
  functor from $\mathcal A \Times \mathcal B$ to any finite linear category factors uniquely
  through a right exact linear functor with domain $\mathcal A \boti \mathcal B$.}
  % \cite{deli} Section 5.1, \cite{EGno} Definition 1.11.1
Via the so-called parameter theorem for coends \cite[Sect.\,V.3]{MAcl},
  % also: Section 3.1 of \cite{fuSc23}
a coend with value in the functor category $\Fun(\C,\mathcal E)$ can be understood
in terms of coends taking values in $\mathcal E$, according to
$\int^{X\in\mathcal A} G(-;X,X) \eq \big( \int^{X\in\mathcal A}\widetilde G(X,X)\, \big)\,(-)$
with $\widetilde G(\reflectbox{$?$},?) \,{:=}\, G(-;\reflectbox{$?$},?)\colon
\mathcal A \Times \mathcal A^{\mathrm{op}} \To \Fun(\C,\mathcal E)$
for $G\colon  \C \Times \mathcal A \Times \mathcal A^{\mathrm{op}} \To \mathcal E$.
Like with any limit or colimit, the precise choice of domain of a functor is relevant for
its coend; thus the coend will, in general, change when the category \D\ is replaced by a 
subcategory of \D. In the context of conformal field theory it will be essential to be
allowed to invoke the isomorphism \eqref{deltaHom} and thus to consider, along with coends
taken in a functor category $\Fun(\C,\mathcal E)$, also coends taken in the subcategory of
left exact functors from \C\ to $\mathcal E$. In such a situation we denote, following
\cite{lyub11}, a coend of the latter type by the symbol $\oint$, reserving
the symbol $\int$ for the coend over the category $\Fun(\C,\mathcal E)$ of all functors.

Two results about such `left exact coends' for functors between finite tensor categories 
will be crucial in our constructions. First, we have \Cite{Sect.\,8.2}{lyub11} 
  \be
  \oint^{X\in\D}\!\! \HomD(U,V\oti X \oti X^\vee) = \HomD(U,V\oti K) 
  \labl{oint=HomK}
functorially in $U,V \iN \D$,
with $K\iN\D$ the coend \erf{K}, and with dinatural family given by post-composition 
with $\id_V\oti \iK$, where $\iK$ is the dinatural family for the coend $K$.
And second, there is a variant of the Fubini theorem, stating that \Cite{Thm.\,B.2}{lyub11}
under analogous assumptions as for the standard Fubini theorem \eqref{eq:Fubini} for a functor
$F\colon \C \Times \mathcal A \Times \mathcal A^{\mathrm{op}} \Times \mathcal B \Times
\mathcal B^{\mathrm{op}} \,{\to}\, \mathcal E$ we have unique isomorphisms
  \be
  \bearll \dsty
  \oint^{U\in\mathcal A}\! \Big( \int^{X\in\mathcal B}\!\! F(T;U,U,X,X) \Big) \!\!&\dsty
  \cong \int^{U\times X\,\in\,\mathcal A\times \mathcal B}\!\! F(T;U,U,X,X)
  \Nxl1 &\dsty
  \cong \oint^{X\in\mathcal B}\! \Big( \int^{U\in\mathcal A}\!\! F(T;U,U,X,X) \Big)
  \eear
  \labl{eq:Fubini2}
of coends, functorial in $T \iN \C$ (for details see e.g.\ \cite[Sect.\,4]{fuSc23}).

\medskip

In the sequel, \D\ will be a finite ribbon category and, unless noted otherwise, it will
be modular.

%%%%%%%%%%%%%%%%%%%%%%%%%%%%%%%%%%%%%%%%%%%%%%%%%%%%%%%%%%%%%%%%%%%%%%%%%%%%%%%%%%

\subsection{Conformal block functors for modular finite ribbon categories}\label{sec:mBl}

It is known \cite{lyub11} 
that, given a modular finite ribbon category \D, one can assign to any extended surface
$\surf$ with $p$ incoming and $q$ outgoing boundary circles a left-exact functor from 
the Deligne product $\D^{\boxtimes q} \,{\boxtimes}\, (\D\op)^{\boxtimes p}$
to \Vect, in such a way that the so obtained vector spaces are morphism spaces of \D.
 %  (determined uniquely up to unique isomorphism).
We will think of a functor with domain $\D\op$ as a contravariant functor with domain \D,
and correspondingly work with functors
  \be
  \Bl_\surf:\quad \D^{\boxtimes (p+q)} \xrightarrow{~~} \Vect
  \labl{BlE}
with covariance properties understood. We refer to these as \emph{conformal block functors}.
For compatibility with the symmetric monoidal structure of the category of surfaces and of
\Vect\ we put $\Bl(\surf {\sqcup} \surf') \,{:=}\, \Bl(\surf) \otik \Bl(\surf')$ and
$\Bl(\emptyset) \,{:=}\, \ko$. Also, it suffices to define $\Bl_\surf$ for all ordered
$(p{+}q)$-tuples of objects of \D, corresponding to objects $X_1 \boti X_2 \boti
\cdots \boti X_{p+q}$ of $\D^{\boxtimes (p+q)}$. We think of the entry $X_\alpha$
of the tuple as labeling the boundary circle $\alpha \iN \piods$.
If \D\ is semisimple, the conformal block functors constitute part of the
three-di\-men\-si\-o\-nal topological field theory that is associated with \D\ \cite{TUra};
in the general case no three-di\-men\-si\-o\-nal topological field theory exists.

To construct the functors auxiliary structure on the surfaces is needed in intermediate
steps \cite{lyub11,lyub6}. Fine markings, as introduced in Definition \ref{def:fine},
provide such an auxiliary structure. Accordingly we consider a functor
  \be
  \mBl_\surfn:\quad \D^{\boxtimes (p+q)} \xrightarrow{~~} \Vect
  \labl{mBlE}
for every finely marked surface \surfm. Henceforth we will deal exclusively with markings
that are fine; accordingly we often refer to them just as markings.

In view of their role in quantum field theory, the functors $\mBl_\surfn$ should be 
compatible with the following idea of locality: We demand that for any cut system $C$ on 
$\surf$ the functor $\mBl_\surfn$ is expressible through the functors $\mBl_{\surf_i,\mark_i}$
for the connected components of the cut surface $\oline\surf{C}$. This implies that
the functor for any surface can be obtained by implementing the sewing of spheres that have 
at most three holes and markings without cuts.
Moreover, the functors for the latter elementary world sheets should be expressible in terms 
of the tensor product of \D\ and the Hom functor; thereby they will in particular be left exact
and allow for an interpretation in terms of intermediate states
that fit into representations of some symmetry structure.
  
Implementing sewing on conformal blocks is achieved with the help of suitable 
coend constructions \cite{lyub11} that keep us within the class of left exact functors.
In more detail, we proceed as follows.

\medskip

\noindent (1)\, \emph{Spheres with at most three holes.}
\\[2pt]
Let $(\gpq 0p{3-p},\mark)$ be a marked surface of genus zero with 3 holes, $p$ of which
are incoming, endowed with a marking without cuts. Denote by $\bar\varphi$ the cyclic 
permutation of the edges of the standard marking
on the three-holed standard sphere $S^\circ_{3;\eps}$ that is induced by the
orientation preserving diffeomorphism $\varphi\colon \gpq 0p{3-p} \To S^\circ_{3;\eps}$ 
from Definition \ref{def:standardmarking}(i). Further, write
$X_\alpha\wee \,{:=}\, X_\alpha^{}$ for $\eps_\alpha \eq {+} 1$ (i.e.\ $\alpha$ 
an outgoing boundary circle) and $X_\alpha\wee \,{:=}\, X_\alpha^\vee$ for 
$\eps_\alpha \eq {-}1$ ($\alpha$ incoming).
We then define $\mBl_{\gpq 0p{3-p},\mark}$ as the left exact functor given by
  \be
  \mBl_{\gpq 0p{3-p},\mark}(X_1,X_2,X_3) := \HomD^{}(\one, X_{\bar\varphi^{-1}(1)}\wee
  \oti X_{\bar\varphi^{-1}(2)}\wee\oti X_{\bar\varphi^{-1}(3)}\wee) \,.
  \labl{defBl03}
For spheres with $n \,{<}\, 3$ holes we define $\mBl$ analogously, with the covariant
argument of the Hom functor being a tensor product having $n$ factors.

\medskip

\noindent (2)\, \emph{Spheres with any number of holes.}
\\[2pt]
Let $\surfm$ be a connected marked surface of genus zero. Denote by $(\surf_l,\mark_l)$,
$l \eq 1,2,...\,,\ell$, the connected components of the cut surface $\oline\surf{C}$.
The marking being fine, each of the surfaces $\surf_l$ is a sphere with at most three holes.
In agreement with the general remarks above, we define a left exact functor $\mBl_\surfn$
as a suitable coend over the tensor product of the conformal block functors for the
surfaces $(\surf_l,\mark_l)$.

To give this prescription explicitly, we need additional notation.
For every cut $c_k \iN C$, label the two corresponding boundary circles of $\oline\surf{C}$
by an object $Y_k \iN \D$. Denote by $X_{l;i}$, $i\iN\{1,2,...\,,n_l\}$, the labels of the
boundary circles of the component $\surf_l$ that come from the boundary of $\surf$ and by
$\tilde Y_{l;j}$, $j\iN\{1,2,...\,,m_l\}$, the labels of those which come from cuts of 
$\surf$ (such that each of the objects $Y_k$ appears precisely twice in the list of all
$\tilde Y_{l;j}$, for two distinct values of $l$). By a slight abuse of notation write
$(X_{l;1},...\,,X_{l;n_l},\tilde Y_{l;1},...\,,\tilde Y_{l;m_l})$ for the tuple of objects
of \D\ that label the boundary circles of $\surf_l$, ordered according to the ordering 
of the edges of $\mark_{\!l}$. (For an illustration of these conventions, in the case 
of a genus-one surface, see Figure \ref{fig3}.) We then set
 % $\boxtimes$ versus $\times$ is primarily relevant for (2.7) and (2.9). 
 % note that a state must be linear in each factor.
  \be
  \mBl_\surfn(X_{1;1},...\,,X_{\ell;n_\ell})
  := \int^{Y_1 \boxtimes \,\cdots\, \boxtimes Y_{|C|} \,\in\, \D^{\boxtimes |C|}} \!
  \bigotimes_{l=0}^\ell \mBl_{\surf_l,\mark_{\!l}}
  (X_{l;1},...\,,X_{l;n_l},\tilde Y_{l;1},...\,,\tilde Y_{l;m_l}) \,.
  \labl{eq:block2}
This indeed furnishes a left exact functor, which can be seen as follows.
By invoking the Fubini theorem \eqref{eq:Fubini}
we can rewrite the right hand side as an iterated coend
  $$
  \mBl_\surfn(X_{1;1},...\,,X_{\ell;n_\ell})
  = \int^{Y_1 \in \D}\!\!\!\! \int^{Y_2 \in \D}\!\!\! \cdots \int^{Y_{|C|} \in \D} \!
  \bigotimes_{l=0}^\ell \mBl_{\surf_l,\mark_{\!l}}
  (X_{l;1},...\,,X_{l;n_l},\tilde Y_{l;1},...\,,\tilde Y_{l;m_l}) \,.
  $$
(Up to unique natural isomorphism the ordering of the cuts in $C$ does not matter; owing
to the symmetry of \Vect, neither does the ordering of the components of $\oline\surf{C}$.)
For each of the iterated coends we can then invoke, consecutively, the property 
\erf{deltaHom} of the Hom functor, combined, if needed, with the duality of \D.
Doing so, for any marked sphere $\surfm$ we obtain a canonical isomorphism
 % they become canonical once the morphisms defining the coend are specified.
$ \mBl_\surfn(-,...\,,-) \,{\cong}\, \HomD(\one,-\wee \oti \cdots \oti -\wee)$ of left exact 
functors. (It is thus appropriate to think of the coend prescription above as a means to
ensure compatibility of the conformal blocks at genus zero with the tensor product.)

\medskip

\noindent (3)\, \emph{Disconnected surfaces.}
\\[2pt]
The functor for the disjoint union of two marked surfaces $(\surf_1,\mark_1)$ and 
$(\surf_2,\mark_2)$ is defined to be the tensor product
  \be
  \mBl_{(\surf_1,\mark_1) \sqcup (\surf_2,\mark_2)} := 
  \mBl_{(\surf_1,\mark_1)} \otimes_\ko \mBl_{((\surf_2,\mark_2)} \,,
  \labl{eq:Bl-sqcup}
and we set $\mBl_{\emptyset} \,{:=}\, \ko$.

\begin{center}
\begin{figure}[tbh]
\begin{picture}(260,145)(0,0)
  \put(150,4) {
  \put(0,0)    {\scalebox{.26} {\includegraphics{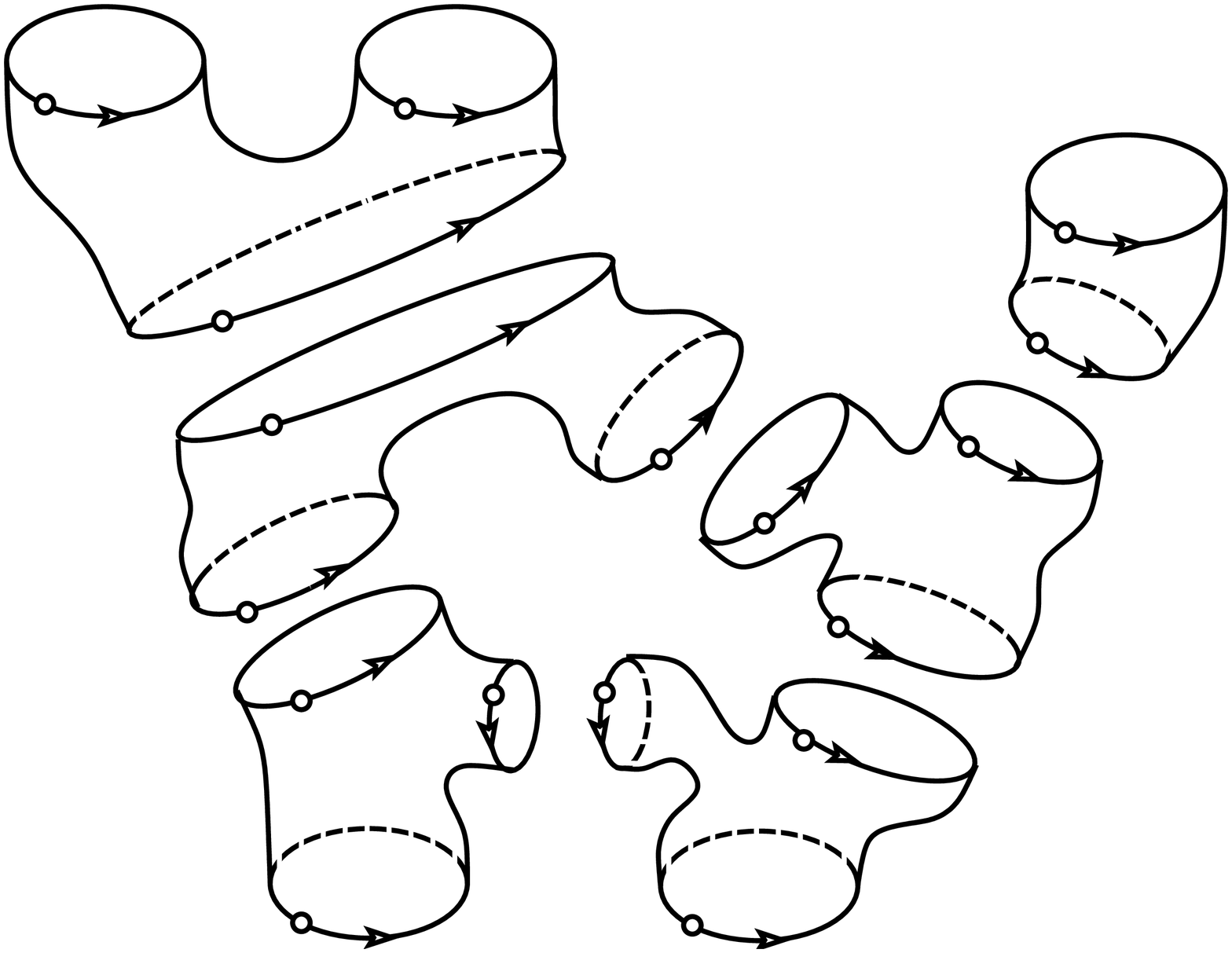}}}
  \put(5,146)  {\footnotesize $ X_{1;1} $}
  \put(59,146) {\footnotesize $ X_{1;2} $}
  \put(45,-9)  {\footnotesize $ X_{3;1} $}
  \put(105,-9) {\footnotesize $ X_{4;1} $}
  \put(162,126){\footnotesize $ X_{6;1} $}
  \put(-37,58) {\begin{turn}{23}\footnotesize $\tilde Y_{1;1} = Y_1 = \tilde Y_{2;3}$\end{turn}}
  \put(113,84) {\begin{turn}{60}\footnotesize $\tilde Y_{2;1} = Y_2 = \tilde Y_{5;3}$\end{turn}}
  \put(-32,16) {\begin{turn}{24}\footnotesize $\tilde Y_{2;2} = Y_3 = \tilde Y_{3;1}$\end{turn}}
  \put(80,58)  {\begin{turn}{-93}\footnotesize$\tilde Y_{3;2}$\end{turn}}
  \put(76,35)  {\begin{turn}{-93}\footnotesize$         = Y_4 \,{=}\, \tilde Y_{4;1}$\end{turn}}
  \put(146,32) {\begin{turn}{-22}\footnotesize$\tilde Y_{4;2} = Y_5 = \tilde Y_{5;2}$\end{turn}}
  \put(164,76) {\begin{turn}{-25}\footnotesize$\tilde Y_{5;1} = Y_6 = \tilde Y_{6;1}$\end{turn}}
  }
\end{picture}
  \caption{A labeling of the boundary circles of the components $(\surf_l,\mark_{\!l})$
  by objects $X_{l;i}$ and $\tilde Y_{l;j}$ as described in the text before \erf{eq:block2},
  for the cut surface that results from the marking shown in Figure \ref{fig2}.
  \label{fig3}}
\end{figure}
\end{center}

\noindent (4)\, \emph{Higher genus surfaces.}
\\[2pt]
Let now $\surfm$ be a marked surface of arbitrary genus $g$. Then we define the functor
$\mBl_\surfn$ by
  \be
  \mBl_\surfn(X_{1;1},...,X_{\ell;n_\ell})
  := \oint^{Y_1 \boxtimes \,\cdots\, \boxtimes Y_{|C|} \in \D^{\boxtimes |C|}} \!
  \bigotimes_{l=0}^\ell
  \mBl_{\surf_l,\mark_{\!l}}(X_{l;1},...,X_{l;n_l},\tilde Y_{l;1},...,\tilde Y_{l;m_l}) \,.
  \labl{eqdef:mBl-g}
Here the conventions concerning the objects $X_{l;i}$ and $\tilde Y_{l;j}$ are the same
as above, while when taking coends we now need to work explicitly with coends $\oint$ 
in categories of left exact functors. (Left-exactness, and thus representability, would 
no longer necessarily be preserved when taking coends in the category of all functors from
the appropriate Deligne power of \D\ to \Vect.)
The Fubini theorem in the form of \eqref{eq:Fubini2} allows us to rewrite 
$\mBl_\surfn$, similarly as in the case of spheres, as an iterated coend:
  $$
  \mBl_\surfn(X_{1;1},...,X_{\ell;n_\ell})
  = \oint^{Y_1 \in \D}\!\!\!\!\! \cdots \oint^{Y_{g} \in \D} \!\!\!
  \int^{Y_{g+1} \in \D}\!\!\!\!\! \cdots \!\int^{Y_{|C|} \in \D} \!
  \bigotimes_{l=0}^\ell
  \mBl_{\surf_l,\mark_{\!l}}(X_{l;1},...,X_{l;n_l},\tilde Y_{l;1},...,\tilde Y_{l;m_l}) \,.
  $$
Here the subset $\{c_1,c_2,...\,,c_g\} \,{\subset}\, C$ consisting of those cuts that
correspond to the objects $Y_k$ with $k \iN \{1,2,....\,,g\}$ needs to be selected in such
a way that the corresponding cut surface $\oline\surf{\{c_1,...c_g\}}$ has genus zero. 
 % (and thus in particular independent of the specific choice of subset $C'$).
Given this description of $\mBl_\surfn$, by recalling the genus-0 result \erf{defBl03} 
and invoking iteratively the relations \erf{deltaHom} and \erf{oint=HomK} 
one obtains for any connected marked surface \surfm\ a distinguished isomorphism
  \be
  \mBl_\surfn(-, ...\, ,-)
  \cong \HomD(\one,-\wee \oti \cdots \oti -\wee \oti K^{\otimes g})
  \labl{defBlgn}
of left exact functors, with $K \iN \D$ the coend \erf{K}.

\begin{rem}
As we are working with coends in functor categories, the prescription for higher genus
applies directly to surfaces for which each connected component has non-empty boundary.
But once $\mBl_\surfn$ is defined for all such surfaces, we can define $\mBl_\surfn$ for
a surface $\surf$ with empty boundary as the vector space $\mBl_{\surf',\mark'}(\one)$
with $\one$ the monoidal unit of \D\ and $\surf'$ obtained from $\surf$ by removing a disk.
\end{rem}

\medskip

For any extended surface $\surf$ there is a functor $U_\surf$ from the groupoid 
$\CWm(\surf)$ of fine markings on $\surf$ to the category $E/\!/\maps$ with a 
single object $\surf$ and with morphisms given by the mapping class group $\maps$
that forgets the structure of a marking (similar to the functor $U$ 
that will be introduced in Definition \ref{def:unmarking} below).
One can construct the conformal block functor $\Bl_\surf$ \erf{BlE} from the functors
$\mBl_\surfn$ by a right Kan extension along $U_\surf$. The so obtained
conformal block functors obey analogues of \erf{eq:Bl-sqcup} and \erf{defBlgn}.
We do not give any further details of the construction of $\Bl_\surf$ because 
they will not be needed in the sequel.

\begin{rem}
For any modular finite ribbon category \D\ the conformal blocks obtained by 
the prescription above provide equivalent representations of the mapping class groups
as those obtained in \cite{lyub11} by a different construction. The variant presented
above is tailored to our goal of determining consistent systems of correlators in the
sense of Definition \ref{def:Corr}. Apart from the precise treatment of boundary circles 
(as either incoming or outgoing), for the case that \D\ is a semisimple modular tensor
category it reduces to the construction in \cite{baKir}.
\end{rem}
 
The correlators we are looking for are elements of very specific conformal blocks spaces
$\Bl_\surf(X_1,...,X_n)$: those for which each of the arguments $X_i$ 
is one and the same object of \D, namely the bulk object $F$.
Nevertheless we had to discuss also block spaces with generic arguments $X_i$:
Compatibility with the sewing of surfaces is part of the consistency 
requirements to be imposed on correlators. As we will see in Section \ref{ss:pinned},
to formulate sewing we need certain structure morphisms of coends, and to get these
morphisms we need to consider blocks for arbitrary insertions.

%%%%%%%%%%%%%%%%%%%%%%%%%%%%%%%%%%%%%%%%%%%%%%%%%%%%%%%%%%%%%%%%%%%%%%%%%%%%%%%%%%

\section{Consistency conditions for correlators}\label{sec3}

The purpose of this section is to give a concise definition of the notion of a
consistent system of bulk field correlators for local conformal field theories on 
oriented surfaces. In short, the correlators must be invariant under the action of
mapping class groups on conformal blocks, must be compatible with the sewing of surfaces,
and must obey a non-degeneracy condition.
To formalize these conditions, as compared to Section \ref{sec2} we change our perspective in
two respects: First, in Section \ref{sec:mBl} we described conformal blocks for (extended)
surfaces $\surf$ with arbitrary objects of \D\ associated to the boundary circles of $\surf$,
and thereby dealt with a system of functors \erf{BlE} of conformal blocks. In contrast, for 
the correlators we need to associate to every boundary circle of $\surf$ one and the 
same object (respectively its dual, in the case of an incoming boundary circle), the 
\emph{bulk object}. In case the category \D\ has a representation theoretic interpretation, 
the vector space underlying this object is the space of states that is related to 
bulk fields under a field-state correspondence.

Accordingly we now select 
one specific object $F$ of \D\ as a (candidate) bulk object. Thus for each surface $\surf$
we are now dealing with a \emph{vector space} of conformal blocks, endowed with an action of
the mapping class group $\maps$. The correlator for $\surf$ is a vector in this space; it is
required to be invariant under the $\maps$-action. 
Second, previously we treated one surface at a time, e.g.\ associated, in Section 
\ref{sec:fine}, a groupoid $\CWm(\surf)$ separately to each surface. In contrast, the 
sewing constraints require the system of correlators to be compatible with sewings 
that connect correlators on different surfaces. Accordingly we now study all surfaces 
together, and in particular treat the morphisms in all the groupoids $\CWm(\surf)$ as well 
as sewings of surfaces on the same footing. (Since sewing involves a sum over intermediate
states, and thus a coend, the considerations in Section 2 are necessary for this approach.)

A major step in this section will therefore be to construct, for a chosen object $F$ of \D, 
a symmetric monoidal functor $\BlF$, to be called the \emph{$F$-pinned block functor}, 
or just pinned block functor, from a suitable category of surfaces to \Vect. 
We will then see that the consistency conditions for correlators with $F$ as the
(candidate) bulk object can be neatly summarized as the requirement that these vectors
define a monoidal natural transformation, satisfying a simple non-degeneracy condition,
from a certain trivial functor to the pinned block functor $\BlF$. Our first task
will be to introduce the relevant categories of surfaces and marked surfaces.
This is initiated in Section \ref{ss:Surf} and completed in Section \ref{ss:cext}, where
we accommodate the fact that the mapping class groups act on conformal blocks 
only projectively.

%%%%%%%%%%%%%%%%%%%%%%%%%%%%%%%%%%%%%%%%%%%%%%%%%%%%%%%%%%%%%%%%%%%%%%%%%%%%%%%%%%

\subsection{The categories of surfaces and of marked surfaces}\label{ss:Surf}

The two geometric categories of our interest have (extended) surfaces, 
respectively marked surfaces, as objects. Their morphisms are generated by two
types of special morphisms: automorphisms respectively moves of a given surface on 
the one hand, and sewings of surfaces, as described 
in Definition \ref{def:standardmarking}(iv), on the other.

Recall that a move of a marked surface is a morphism of the groupoid $\CWm(\surf)$,
and that it can be presented as a finite sequence of the elementary moves listed in 
Section \ref{sec:fine}, According to Definition \ref{def:maps}, the elements of the 
mapping class group $\maps$ preserve the orientation of each boundary circle
and thus map the subsets of incoming and outgoing boundary circles to themselves. 
$\CWm(\surf)$ is in fact also intimately related to a larger group 
$\mapsx\,{\supset}\,\maps$, defined analogously as $\maps$, but
without the restriction to preserve the orientations of boundary circles §\cite{baKir}:
  % for a more precise citation one could mention remark 4.3, but this
  % is not about fine markings. see also \cite{barm3} e.g. around Lemma 1.15.
 %
First, the set of morphisms of $\CWm(\surf)$ is invariant under the obvious action of
\mapsx. Moreover, for every $\phi \iN \mapsx$ there is a move
$\mathrm m \eq \mathrm m(\phi)$ in $\CWm(\surf)$ that maps a given fine marking $(C,\mark)$
of $\surf$ to $(\phi(C),\phi(\mark))$ while, conversely, any finite sequence 
$\mathrm m\colon \surfm\,\To(\surf',\mark')$ of elementary moves in $\CWm(\surf)$
not involving the F-move (which changes the number of cuts) uniquely determines an
element $\phi_{\mathrm m}$ of \mapsx\ such that the action of $\phi_{\mathrm m}$
on a fine marking of $\surf$ reproduces the effect of the move $\mathrm m$. Recall e.g.\
that the B-move affects the markings in the same way as a certain braiding diffeomorphism.
 
We call a morphism $\mathrm m$ of $\CWm(\surf)$ an \emph{admissible move} 
iff the associated element $\phi_{\mathrm m}$ of the group \mapsx\ is contained in
the subgroup \maps, i.e.\ is a mapping class in the sense of Definition \ref{def:maps}.

\begin{defi}
~\\[2pt]
(i)\, The \emph{category \Surf\ of surfaces} is the monoidal category having extended
surfaces $\surf$ as objects and whose morphisms are 
pairs $(\varphi,\mso)$ consisting of a mapping class $\varphi \iN \maps$ 
followed by a sewing $\mso\colon \surf \To \Sqcup\surf$ of surfaces.
\\[2pt]
(ii)\, The \emph{category \mSurf\ of marked surfaces} is the monoidal category having
marked surfaces $\surfm$ with fine marking as objects and whose morphisms are pairs 
$(\mathrm m,\mso)$ consisting of an admissible move of any of the groupoids $\CWm(\surf)$ 
for $\surf\iN\Surf$ followed by a sewing $\surfm \To \Sqcup\surfm$ of marked surfaces.
\end{defi}

In both categories the tensor product is given by disjoint union and $\emptyset$ is 
the monoidal unit; both \Surf\ and \mSurf\ are symmetric monoidal.
In the definition we have suppressed the description of the relations among the 
pairs that define the morphisms.
Besides the relations in the individual mapping class groups $\maps$, respectively
those among the admissible moves of the individual groupoids $\CWm(\surf)$, they consist of 
obvious compatibility relations 
that express the composition of a sewing with a mapping class 
(respectively, with an admissible move) as the composition of a
uniquely determined mapping class (respectively, admissible move)
of the unsewn surface with the sewing. Such
relations are discussed in detail in \cite{haLS}; 
we refrain from writing any explicit formulas (compare also \Cite{Rem.\,5.6.4}{BAki}).

\begin{rem}
In \Cite{Sect.\,5.6}{BAki} the Teichm\"uller tower of mapping class groups is studied. 
It has the same objects as \Surf, but only mapping classes are taken as morphisms, while
sewings are regarded as an additional structure on the category. For the purpose of describing
conformal field theory correlators it is very convenient to take, as in \cite{lyub11}, also
sewings as morphisms. These are non-invertible; note that we do not introduce morphisms 
for the operation of cutting surfaces, which is inverse to the sewing operation.
\end{rem}

Note that sewing is a local construction. As a 
consequence, we can restrict our attention to suitable \emph{elementary} sewing morphisms 
involving only specific types of surfaces. For the sake of concreteness in explicit formulas 
it is, however, still convenient to treat two kinds of situations separately, namely
(in the case of \Surf, and analogously for \mSurf) the following: Either a sewing morphism
  \be
  \mss12^{} : \quad \gpq{g_1}{p_1}{q_1} \,{\sqcup}\,\gpq{g_2}{p_2}{q_2} \to
  \gpq{g_1+g_2}{p_1+p_2-1}{q_1+q_2-1} 
  \labl{elemsurf1}
from the disjoint union of two connected surfaces of genus $g_1$ and $g_2$ having
$n_1 \eq p_1\pl q_1$ and $n_2 \eq p_2\pl q_2$ holes, respectively, to a connected surface
of genus $g_1 \pl g_2$ with $n_1 \pl n_2 \mi 2$ holes; or else, a sewing morphism
  \be
  \ms^{} : \quad \gpq gpq \to \gpq{g+1}{p-1}{q-1}
  \labl{elemsurf2}
from a connected surface of genus $g$ with $n$ holes to a connected surface of
genus $g \pl 1$ with $n \mi 2$ holes.

While apart from the sewings, the morphisms of \mSurf\ and \Surf\ are quite different,
owing to the relation between morphisms of $\CWm(\surf)$ and the group \mapsx\ there is
nevertheless a natural functor from \mSurf\ to \Surf\ that is a kind of forgetful functor.
Its action on objects and on sewings is the obvious one, namely forgetting the marking.

\begin{defi}\label{def:unmarking}
The \emph{unmarking functor} $\U\colon \mSurf\To\Surf$ is the symmetric monoidal functor
that is uniquely determined by the following prescription.
\\[1pt]
(i)\, On objects and on sewings, $\U$ forgets the marking, i.e.\ $\U\surfm \,{:=}\, \surf$ and
  $$
  \U\big( \surfm\,{\to}\,\Sqcup\surfm \big) := (\surf\,{\to}\,\Sqcup\surf) \,.
  $$
(ii) The F-move $\mF\colon \surfm\,{\to}\,\surfmp$ is mapped to the identity, 
$\U(\mF) \,{:=}\, \id_\surf$.
\\[2pt]
(iii)\, The other elementary moves $\mathrm m$ of $\CWm(\surf)$ -- the Z-move, 
B-move, A-move and S-move -- are mapped to the mapping class $\phi_{\mathrm m}$ 
that reproduces the effect of $\mathrm m$ on the marking of $\surf$.
\end{defi}

We would now like to fix an object $F$ of \D\ and construct the $F$-pinned block functor 
that provides the conformal block spaces for bulk fields. We will first obtain it as 
a monoidal functor $\mBlF$ from marked surfaces to \Vect\ and then upon Kan extension along
the unmarking functor get a monoidal functor $\BlF$ from (extended) surfaces to \Vect.
Defining $\mBlF$ on an object \surfm\ of \mSurf\ is easy;
we just apply the functor $\mBl_\surfn$ introduced in \erf{mBlE} to 
the object $F^{\boxtimes |\piods|} \iN \D^{\boxtimes |\piods|}$:

\begin{defi}\label{def:mBlF1}
To any object $\surfm$ of \mSurf, $\mBlF$ assigns the \findim\ vector space
  \be
  \mBlF\surfm := \mBl_\surfn(F,F,...\,,F)
  \ee
with the appropriate number $|\piods|$ of arguments of $\mBl_\surfn$.
\end{defi}

 % Example: for the surface in Figure 2,
 % $\mBlF\surfm \cong \HomD(\one, F^\vee {\otimes}\,F^\vee {\otimes}\,F \oti F \oti F \oti K)$

To define $\mBlF$ also on morphisms is somewhat less straightforward:
there is an obstruction, known as the framing anomaly, which results from the fact that
the action of $\maps$ on the space $\mBlF\surfm$ is projective. We deal with this
obstruction by adequately extending the categories \mSurf\ and \Surf.

%%%%%%%%%%%%%%%%%%%%%%%%%%%%%%%%%%%%%%%%%%%%%%%%%%%%%%%%%%%%%%%%%%%%%%%%%%%%%%%%%%

\subsection{Central extensions of categories of surfaces}\label{ss:cext}

As a first step of defining the pinned block functor $\mBlF$ on morphisms we
consider moves of marked surfaces. This will in particular demonstrate the need
to work with suitable extensions of the categories \mSurf\ and \Surf. 
Besides these extensions our construction will involve 
the dinatural structure morphisms for the coends \erf{deltaHom} 
and \erf{oint=HomK}, as well as suitable specific families of bijective linear maps 
which correspond to the elementary moves of marked surfaces.

We start by introducing the latter families of linear maps. Let \D\ be a modular 
finite ribbon category. Our prescription is entirely based on structural data of \D,
including those which are captured by the Hopf algebra $K \iN \D$:

\begin{defi}\label{def:Z...S-iso} 
Denote the braiding of \D\ by $c$, the evaluation and coevaluation for the right duality
of \D\ by $d$ and $b$, respectively, and the pivotal structure of \D\ by $\pi$
(i.e.\ $\pi_X\colon X\To X^{\vee\vee}$).
\\[4pt]
(i)\, For $X,Y\iN\D$, the \emph{Z-isomorphism} $\mZ_{X,Y}\colon \HomD(\one,X\oti Y)
\To \HomD(\one,Y\oti X)$ is the linear map given by
 % compare formula (5.3.5) of \cite{BAki} and (A.9) of \cite{barm3};
 % note that their $Z$ is our $Z^{-1}$.
  \be
  \mZ_{X,Y}(f) := (d_X \oti \id_{Y\otimes X}) 
  \circ (\idXv \oti f \oti \piv_X^{-1}) \circ b_{X^\vee_{}} \,.
  \labl{def-Z-iso}
(ii)\, For $X,Y,U\iN\D$, the \emph{B-isomorphism} 
$\mB_{X,Y,U}\colon \HomD(\one,X\oti Y\oti U) \To \HomD(\one,Y\oti X \oti U)$
is the linear map given by
 % compare formula (5.3.7) of \cite{BAki} and (A.10) of \cite{bam3}.
  \be
  \mB_{X,Y,U}(f) := (c^{}_{X,Y} \otimes \id_U^{}) \circ f \,.
  \labl{def-B-iso}
(iii)\, For $U,V\iN\D$, the \emph{F-isomorphism} 
  $$  
  \mF_{U,V} :\quad \int^{X\in\D}\!\! \HomD(\one, X\oti V) \otik \HomD(\one,U\oti X^\vee) 
  \xrightarrow{~~} \HomD(\one,U\oti V)
  $$ 
is the linear map that on morphisms $f\oti g$ in $\HomD(\one,X\oti V) \otik \HomD(\one,U\oti X^\vee)$
acts as 
 % compare formula (5.3.6) of \cite{BAki} and (A.11) of \cite{bam3}.
  \be
  f\oti g \,\longmapsto\, (\id_U \oti d_X \oti \id_V) \circ (g \oti f) \,.
  \labl{def-F-isoUV}
(iv)\,
For $U,U',V,V'\iN\D$, the \emph{A-isomorphism} $\mA_{U,U',V,V'}$ is the composition
  \be
  ~\hsp{-.8} \bearl\dsty
  \int^{X\in\D}\!\! \HomD(\one, U\oti U'\oti X) \otik \HomD(\one,V\oti V'\oti X^\vee) 
  \Nxl1 \dsty \hsp{2.8}
  \xrightarrow{~=~}~ \HomD(\one,U\oti U'\oti V\oti V')
  ~\xrightarrow{~\mZ_{U,U'\otimes V\otimes V'}~}~ \HomD(\one,U'\oti V\oti V'\oti U)
  \Nxl2 \dsty \hsp{9.4} 
  \xrightarrow{~=~} \int^{Y\in\D}\!\!
  \HomD(\one, U'\oti V\oti Y) \otik \HomD(\one,V'\oti U\oti Y^\vee) \,,
  \eear
  \labl{def-A-iso}
where the equalities indicate the identifications of coends that result when applying
formula \erf{deltaHom} with $G$ an appropriate Hom functor.\,%
  % compare (8.1) of \cite{lyub11}.
 \footnote{~It is worth noting that the map \erf{def-F-isoUV} is nothing but
 the dinatural structure morphism for the coend in question, so that
 $\mF_{U,V}$ is the identification of the vector space $\HomD(\one,U\oti V)$ as the
 coend $\int^{X\in\D} \HomD(\one, X\oti V) \,{\otimes_\ko}\, \HomD(\one,U\oti X^\vee)$.
 Accordingly it is appropriate to write the first and third maps in the A-iso\-mor\-phism 
 \erf{def-A-iso} as equalities.}
\\[3pt]
(v)\,
Denote by $\eps_K$ the counit and by $\Lambda_K$ a non-zero two-sided integral of the 
Hopf algebra
$K$, and recall that $\iK$ denotes the dinatural family of $K$ as a coend.
Define $\mathcal Q_K \iN \EndD(K\oti K)$ by
  $$ 
  \mathcal Q_K \circ (\iK_X \oti \iK_Y) := (\iK_X \oti \iK_Y) \circ
  \big( \id_X^{} \oti (c_{Y,X^\vee_{\phantom,}}^{} \cir c_{X^\vee_{\phantom,}\!,Y}^{})
  \oti \id_{Y^\vee_{\phantom,}} \big) 
  $$ 
and set \cite{lyub8,lyub6}
   \be
   S^K := (\eps_K \oti \id_K) \circ \mathcal Q_K \circ (\id_K \oti \Lambda_K) 
   \,\in \EndD(K) \,.
   \labl{SK}
Then for $U\iN\D$, the \emph{S-isomorphism} $\mS_U$ is the linear endomorphism of
$\oint^{X\in\D}\! \HomD(U,X\oti X^\vee)
        $\linebreak[0]$
{=}\, \HomD(U,K)$ that acts as post-composition by the isomorphism $S^K$.
 % compare formula (5.5.3) of \cite{BAki} and (A.12) of \cite{bam3}.
\end{defi}

\begin{rem} \label{rem:g=0}
Note that the parts (i)\,--\,(iv) of Definition \ref{def:Z...S-iso} only
require \D\ to be a finite ribbon category. Part (v) uses a two-sided integral $\Lambda_K$;
such an integral is guaranteed to exist if \D\ is even modular.
Similarly, in the analysis below modularity will be essential when dealing with
the relations {\rm (W11)\,--\,(W13)}, while it is not needed as long as only the
relations {\rm (W1)\,--\,(W10)} are concerned.
\end{rem}

It is known (see \Cite{Sect.\,8.8}{lyub11} and \Cite{Thm\,2.1.9}{lyub6}) that the
normalization of the integral $\Lambda_K$ can be chosen (uniquely up to sign) in such a
way that the square of the endomorphism $S^K$ in \erf{SK} equals the inverse of the
antipode $\apo_K$ of $K$,
  \be
  \big(S^K{\big)}^2_{} = \apo_K^{-1} .
  \labl{SS=apoi}
In the sequel we take $\Lambda_K$ to be normalized in this way.

\medskip

We can now define $\mBlF$ on generating (and thus not necessarily admissible) moves.
Let us first recall the notation $\eps \iN \{\pm1\}$ indicating whether
a boundary circle is outgoing ($\eps \eq 1$) or incoming ($\eps \eq {-}1$),
and the corresponding notation $X\wee$ standing for $X \iN \D$ if $\eps \eq 1$ and
for $X^\vee$ if $\eps \eq {-}1$. We supplement these conventions by setting
  $$
  \jef := \left\{ \bearll \id_{F^\vee_{}\otimes F}^{} \,\in \EndD(F^\vee_{}{\otimes}\, F)
  & {\rm for}~ \eps \eq 1 \,, \Nxl2 
  \pi_F^{} \oti \idFv \,\in \HomD(F\oti F^\vee_{}, F^{\vee\vee}_{}{\otimes}\, F^\vee_{})
  & {\rm for}~ \eps \eq {-}1 \,, \eear \right.
  $$
with $\pi$ the pivotal structure of \D.

\begin{defi}\label{def:mBlF2}
On generating moves $\mathrm m\colon (\surf,\mark) \To (\surf,\mark')$ of 
the groupoid $\CWm(\surf)$ the assignment $\mBlF$ is defined as follows: To each 
elementary move assign the specific linear isomorphism $\mBlF\surfm \To \mBlF\surfmp$
from Definition \ref{def:Z...S-iso} that bears the same name as the move
and for which all objects of $\D$ involved are given by either $F$ or $F^\vee$. 
Explicitly, if all boundary circles of $\surf$ are outgoing, then
  $$
  \mBlF(\mB) := \mB_{F,F,F} \,,\qquad
  \mBlF(\mA) := \mA_{F,F,F,F} \qquad {\rm and} \qquad
  \mBlF(\mS) := \mS_{F} \,,
  $$
as well as
  $$
  \mBlF(\mZ) := \mZ_{F,F\otimes F} \qquad {\rm and} \qquad
  \mBlF(\mF) := \mF_{F,F\otimes F} 
  $$
in case $\surf$ has three holes, and similarly if $\surf$ has less than three holes.
If any of the boundary circles of $\surf$ are incoming, the appropriate
occurrences of $F$ are to be replaced by $F^\vee$.
\end{defi}

For an arbitrary move $\mathrm m$ one might wish to define $\mBlF(\mathrm m)$ to be the 
composition of isomorphisms from Definition \ref{def:mBlF2} according to the expression 
of $\mathrm m$ as a sequence of elementary moves. However, this works directly only
if those isomorphisms respect the relations among elementary moves. We first note

\begin{lem}\label{lem:W1-11-13}
Applying $\mBlF$ to any of the relations {\rm (W1)\,--\,(W11)} and {\rm (W13)}
listed in Section {\rm \ref{sec:fine}} yields an equality of linear isomorphisms.
\end{lem}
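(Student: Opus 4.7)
The proof will proceed by going through the relations (W1)--(W11) and (W13) individually, unfolding the explicit formulas from Definition \ref{def:Z...S-iso}. My plan is to group the relations by the structural ingredient of \D\ they invoke.

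The first group consists of the bookkeeping and coherence relations (W1)--(W10), which only involve the Z-, B-, F- and A-isomorphisms and thus, as noted in Remark \ref{rem:g=0}, only the finite ribbon structure of \D. Relation (W1) is immediate because $\mBlF$ is defined componentwise on disjoint unions. Relation (W6) is a direct application of the Fubini theorem \eqref{eq:Fubini}, while (W2) follows from the dinaturality of the F-isomorphism \erf{def-F-isoUV} together with the fact that the cylinder acts as a unit under sewing via the identification \erf{deltaHom}. The relations (W3)--(W5) reduce, via the defining formulas \erf{def-Z-iso} and \erf{def-B-iso}, to the snake identity, the naturality of the braiding $c$ and of the pivotal structure, and the fact that two or three iterations of the cyclic rotation \erf{def-Z-iso} return to the identity modulo cancellation of pivotal twists. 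For (W7)--(W10), after merging iterated coends via Fubini and collapsing single coends via \erf{deltaHom}, each relation becomes an instance of a standard coherence axiom of \D: involutivity of the A-move (W7) amounts to $\mZ^2 \eq \id$ in a suitable Hom-space; the triangle (W8) and pentagon (W9) axioms become the corresponding coherence axioms for the associator of \D; and the two hexagons (W10) become the two hexagon identities for the braiding, since $\mBlF(\mB)$ is by construction pre-composition with $c^{}_{X,Y}$.

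The second group is (W11) and (W13), which involve the S-iso\-mor\-phism and therefore the Hopf-algebraic and integral structure of $K$. For (W11) my plan is to rewrite both sides as endomorphisms of $\HomD(F,K)$: using dinaturality of $\iK$, the composition $\mBlF(\mB) \circ \mBlF(\mZ)$ reduces to post-composition with the inverse antipode $\apo_K^{-1}$ of $K$, and equality with $\mBlF(\mS)^2$ is then precisely the normalization identity \erf{SS=apoi}. Relation (W13) is the one I expect to be the main obstacle: both sides are endomorphisms of a left-exact coend attached to a two-holed torus, and their equality hinges on a nontrivial interplay between Dehn moves, the endomorphism $S^K$, and the hexagon data. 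Here my plan is to use the left-exact Fubini theorem \eqref{eq:Fubini2} to present both sides as endomorphisms of a common coend built from $K \oti K$, then express the Dehn moves $\mT_{c''}$ and $\mT_{d''}^{-1}$ through the composition of B-, Z- and F-moves already handled in (W11), and finally reduce the remaining identity to a direct calculation involving $\mathcal Q_K$, the integral $\Lambda_K$ and the Hopf pairing $\varpi_K$ on $K$. This is the only step where modularity, i.e.\ non-degeneracy of $\varpi_K$, will enter essentially, and it will require the most involved graphical manipulation before the integral-based identities can be applied.
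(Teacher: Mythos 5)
Your case-by-case plan matches the paper's own proof in all essentials: (W1)--(W10) are handled by unfolding the definitions and using naturality of the braiding, the pivotal/twist reformulation \erf{Z-theta} of the Z-isomorphism and the coend identifications, (W11) reduces via dinaturality of $\iK$ to post-composition with $\apo_K^{-1}$ followed by the normalization identity \erf{SS=apoi}, and (W13) hinges on the mixed Fubini isomorphism \eqref{eq:Fubini2} interchanging $\oint$ and $\int$. Two small slips worth fixing: $\mBlF(\mB)$ is \emph{post}-composition (not pre-composition) with $c^{}_{X,Y}\oti\id$, and modularity already enters at (W11) through the two-sided integral required for \erf{SS=apoi}, not only at (W13).
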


\begin{proof}
For (W1) and (W2) the claim follows directly from the definitions, while for (W3) and 
(W4) one has to make use of the defining properties of the pivotal structure.
As another example, the proof for (W7) follows by rewriting the Z-isomorphism as
  \be
  \mZ_{X,Y}^{}(f) = (\id_Y^{} \oti \theta_X^{}) \circ c_{X,Y}^{} \circ f
  = (\id_Y^{} \oti \theta_X^{-1}) \circ c_{Y,X}^{-1} \circ f \,,
  \labl{Z-theta}
which uses the relation between the pivotal structure and the twist and braiding
of the ribbon category \D.
Let us give some details for the case of (W11), i.e.\ the genus-1 relation 
$\mB_{c,\alpha} \cir \mZ \eq \mS^2$, with $\alpha$ the boundary of a one-holed torus
that is endowed with a cut system consisting of a single cut $c$.
To prove the assertion we have to deal with morphisms involving the Hopf algebra 
$K$, which by the coend property of $K$ amount to \emph{families} of morphisms.
Specifically, one finds that the family realizing the move $\mB_{c,\alpha} \cir \mZ$ 
consists of the linear maps
  $$
  f \, \longmapsto\,
  (\id_U^{} \oti \idXv \oti \pi_X^{-1}) \circ \mZ_{X,U\otimes X^\vee}^{}
  \circ (B_{U,X}^{}(f) \oti \idXv) \,\in \HomD(\one,U\oti X^\vee \oti X^{\vee\vee}) \,
  $$
with $f \iN \HomD(\one,U\oti X \oti X^\vee)$, for all $X\iN\D$. Using \erf{Z-theta} one 
can see that these give the endomorphism $f \,{\mapsto}\, (\id_U^{} \oti \apo_K^{-1}) \cir f$
of $\HomD(\one,U\oti K)$, with $\apo_K$ the antipode of the Hopf algebra $K$. Hence 
invoking the equality \erf{SS=apoi} (and thereby adopting the corresponding choice of
normalization of $\Lambda_K$),
indeed post-composition with $\id_U^{} \oti \apo_K^{-1}$ implements the square 
of the S-iso\-morphism on $\HomD(\one,U\oti K)$, as required to realize the relation (W11). 
Finally we mention that in the proof for (W13), a crucial additional ingredient is 
to invoke an isomorphism of the form $\oint^{Y\in\D}\! \int^{X\in\D} G(U;X,X,Y,Y) 
\,{\cong}\, \oint^{X\in\D}\! \int^{Y\in\D} G(U;X,X,Y,Y)$, which exists and is 
uniquely determined as a consequence of the Fubini theorem \eqref{eq:Fubini2}.
\end{proof}

It remains to examine the relation (W12), i.e.\ the modular group relation
$ (\mS \cir \mT_c)^3_{} \eq \mS^2_{} $, where $\mT_c$ is the Dehn move around the 
single cut $c$ of a one-holed torus.
Again we deal with morphisms involving the coend $K$ and thus work with families of
morphisms. We find that the family realizing the Dehn move $\mT_c$ is
  $$
  f \, \longmapsto\,
  \mZ^{-1}_{U,X\otimes X^\vee} \circ \big( \mB^{-1}_{X,X^\vee\otimes U} \cir
  Z^{}_{X,X^\vee\otimes U} \big) \circ \mZ^{}_{U,X\otimes X^\vee} \circ f
  $$
with $f \iN \HomD(\one,U\oti X \oti X^\vee)$, for all $X\iN\D$. Upon again invoking
\erf{Z-theta}, this amounts to the linear endomorphism of $\HomD(\one,U\oti K)$ that is 
given by post-composition with $\id_U^{} \oti T^K$, where $T^K \iN \EndD(K)$ is determined by 
  $$
  T^K \circ \iK_X = \iK_X \circ (\theta_X^{} \oti \idXv)
  $$
with $\theta$ the twist of \D. Now by 
\Cite{Thm.\,2.1.9}{lyub6} the morphisms $S^K$ and $T^K$ obey 
the modular group relation up to a scalar factor,
  \be
  {(S^K\cir T^K)}^3 = \zeta\, {(S^K)}^2
  \labl{eq:zeta}
with $\zeta \,{:=}\, \eps_K \cir T^K \cir \Lambda_K \iN \ko^\times$. The number $\zeta$,
which via its dependence on the integral $\Lambda_K$ is determined up to sign, is called
the \emph{central charge} or \emph{framing anomaly}. Its presence in \erf{eq:zeta}
obstructs a linear realization of the morphisms of \mSurf. In terms of the category \Surf\ 
we then get projective rather than genuine representations of mapping class groups.

\begin{rem}\label{rem:redef1}
At genus one the projective action of the mapping class group can actually be reduced to a
genuine
linear action upon redefining the action of the Dehn twist $\mT_c$ \cite{atiy18}. However,
this would not lead to genuine actions at higher genus, compare Remark 3.1.9 of \cite{BAki}.
\end{rem}

We can trade these projective realizations for linear ones by considering suitable
central extensions of categories of surfaces \Cite{\S4}{sega16}. 
Analogously as in \Cite{Sect.\,7}{lyub11} (compare also \Cite{Sect.\,5.7}{BAki}), 
in terms of generators and relations for morphisms of marked surfaces the required
central extension is implemented as follows. First, introduce for each connected surface 
$\surf$ with marking $\mark$ a new invertible generator $\mC_\surfm$. Second, require
that these generators are compatible with (not necessarily admissible) moves $\mathrm m$,
in the sense that
  \be
  \mathrm m \circ \mC_\surfm = \mC_{\surfmp} \circ \mathrm m 
  \labl{eq:mC=Cm}
for any move $\mathrm m\colon \surfm \To \surfmp$, and keep all relations 
(W1)\,--\,(W11) and (W13) among the generating moves, while replacing the modular group 
relation (W12) by
 
\bigskip

  $(\mathrm W12)\cc : $
\\[-29pt]
  $$
  (\mS \circ \mT_c)^3_{} = \mC \circ \mS^2_{} 
  $$

\smallskip \noindent
with $\mC \eq \mC_{(T,\mark)}$ the new generating morphism for a one-holed torus $T$
with marking $\mark$ as described for (W12) in Section \ref{sec:fine}.
And third, impose $\mC_\emptyset \eq \id_\emptyset$ as well as the relation
 % (compare \Cite{Eq.\,(7.2)}{lyub11})
  \be
  \mss12^{} \circ \big( \mC_{(\surf_1,\mark_1)}^{\,\,k_1} \,{\sqcup}\,
  \mC_{(\surf_2,\mark_2)}^{\,\,k_2} \big) =  \mC_{\surfm}^{\,k_1+k_2}  \circ \mss12^{}
  \labl{eq:lyub11-7.2}
for any $k_1,k_2 \iN \zet$ and any sewing $\mss12 \colon (\surf_1,\mark_{\!1})
\,{\sqcup}\, (\surf_2,\mark_{\!2}) \To \surfm $ of the type \erf{elemsurf1}
among connected surfaces.
Thus we introduce, similarly as in \Cite{Def.\,7.2}{lyub11}:

\begin{defi}\label{def:mSurfC} 
The \emph{central extension \mSurfC\ of the category \mSurf\ of marked  surfaces} is 
the category with the same objects as \mSurf\ and with morphisms generated by the
morphisms of \mSurf\ together with the morphisms $\mC_\surfm$ for all connected marked
surfaces $\surfm$, subject to the following relations:
those obtained from the relations among morphisms of \mSurf\ when replacing
   % or rather the resulting relations among admissible moves
$(\mathrm W12)$ by $(\mathrm W12)\cc$; the relations \erf{eq:mC=Cm} for
all admissible moves $\mathrm m$; $\mC_\emptyset \eq \id_\emptyset$; and \erf{eq:lyub11-7.2} 
for any integers $k_1,k_2$ and any sewing of the type \erf{elemsurf1}.
\end{defi}
  
To see how to extend the definition of $\mBlF$ to the morphisms $\mC_{\surfm}$, recall that 
$\mBlF$ maps the moves $\mS$ and $\mT_c$ to post-composition by endomorphisms of the 
Hopf algebra $K$.
This must then likewise apply to the morphism $\mC$ in $(\mathrm W12)\cc$. To ensure 
compatibility
with \erf{eq:zeta} we set $\mBlF(\mC) \,{:=}\, ( \id_{F^\Wee_{}} \oti \mC^K {)}_*$ with 
  $$
  \mC^K := \zeta\, \id_K \,,
  $$
where $\zeta \iN \ko^\times$ is the scalar appearing in \erf{eq:zeta}. To also account for
the commutation relations \erf{eq:lyub11-7.2}, we generalize this prescription to arbitrary 
marked surfaces \surfm, to which $\mBlF$ assigns the space $\HomD(\one,X \oti K^{\otimes g}_{})$
(with $X$ an appropriate tensor product of factors $F$ and $F^\vee$), by
  \be
  \mBlF(\mC_{\surfm}) := \big( \id_X \otimes (\mC^K)^{\otimes g}_{} {\big)}_* \,.
  \labl{eq:mBlF-C}
Note that this way effectively all central extensions are reduced to the choice of 
the single number $\zeta$.

Next recall the unmarking functor $\U\colon \mSurf \To \Surf$ introduced in Definition
\ref{def:unmarking}.
Analogously as for marked surfaces we can centrally extend \Surf\ to a category
\SurfC\ that has a new central automorphism $\mC_\surf$ for every connected surface $\surf$,
and because of the relations \erf{eq:mC=Cm} we can immediately extend $\U$ to a functor from
\mSurfC\ to \SurfC\ by the prescription $\mC_{\surfm} \,{\mapsto}\, \mC_\surf$, i.e.\
by simply forgetting the marking. This allows us to give

\begin{defi}\label{def:SurfC} 
Let $p$ be the natural projection of \mSurfC\ to \mSurf\ that exists by Definition 
\ref{def:mSurfC}. The \emph{central extension \SurfC\ of the category \Surf\ of surfaces} is 
the central extension of \Surf\ that has the same objects as \Surf\ and whose morphisms 
are determined by commutativity of the diagram 
 % of central extensions
  $$
  \begin{tikzcd}
  \mSurfC \ar{r}{p} \ar{d}[swap] {\UC} & \mSurf \ar{d}{\U}
  \\
  \SurfC \ar{r}{} & \Surf 
  \end{tikzcd}
  $$
where $\UC$ is the functor that acts as $\U$ on all objects and on all morphisms of \mSurfC\
that are morphisms of \mSurf, and by by mapping central elements to central elements.
\end{defi}

The morphisms of \SurfC\ are thus generated by sewings and by elements of
central extensions \mapsC\ of the mapping class groups \maps. These groups \mapsC\ 
have e.g.\ been described in \cite{maRob,gervS2}. 

\begin{rem}
Recall from Remark \ref{rem:redef1} that by a suitable redefinition of the action of the 
Dehn twist $\mT_c$ one can achieve genuine actions of the mapping class groups at genus 
one. The conditions \erf{eq:lyub11-7.2}, while providing relations between the central terms 
at any genus $g \,{>}\, 1$ and the one at genus 1, are too weak to imply such a simplification
also at higher genus.
\end{rem}

%%%%%%%%%%%%%%%%%%%%%%%%%%%%%%%%%%%%%%%%%%%%%%%%%%%%%%%%%%%%%%%%%%%%%%%%%%%%%%%%%%

\subsection{The pinned block functor}\label{ss:pinned}

We finally turn our attention to sewings.
Recall that the construction in Section \ref{sec:mBl} gives the conformal blocks as 
coends. We define the functor $\mBlF$ on sewings with the help of the corresponding
dinatural structure morphisms of the coends \erf{eq:block2}. For the sake of giving
explicit formulas we invoke the Fubini theorems \eqref{eq:Fubini} and \eqref{eq:Fubini2}.
to write these coends as iterated coends, again
as in Section \ref{sec:mBl}. This requires to treat the elementary sewings of the form
\erf{elemsurf1} and \erf{elemsurf2} separately, even though sewing is a local operation:
in the case of \erf{elemsurf1} we deal with a coend to which the formula \erf{deltaHom}
applies, while for a sewing as in \erf{elemsurf2} the result \erf{oint=HomK} is relevant.
This leads us to

\begin{defi}\label{def:mBlF3}
On elementary sewings $\mBlF$ acts as follows:
\\[3pt]
To a sewing $\mss12 \colon (\surf_1,\mark_{\!1}) \,{\sqcup}\, (\surf_2,\mark_{\!2})
\To \surfm \eq (\surf_1,\mark_{\!1}) \,{\cup_{\beta,\gamma}}\, (\surf_2,\mark_{\!2})$
of the type \erf{elemsurf1}, for which we have
  $$
  \bearll
  \mBlF(\surf_1,\mark_{\!1})\,{\sqcup}\, (\surf_2,\mark_{\!2})) \!\!&
  = \mBlF(\surf_1,\mark_{\!1}) \otik \mBlF(\surf_2,\mark_{\!2})
  \Nxl3 &
  = \HomD(\one,U \oti F^{\Wee} \oti X) \otik \HomD(\one,V \oti F^{-\Wee} \oti Y)
  \eear
  $$
with objects $U,V,X,Y \iN \D$ which are appropriate tensor products of $F$, $F^\vee$ and $K$,
and with appropriate $\eps \iN \{\pm1\}$, assign the linear map $\mBlF(\mss12)$ that maps 
a linear map $f_1 \oti f_2$ in $\mBlF(\surf_1,\mark_{\!1}) \otik \mBlF(\surf_2,\mark_{\!2})$ to
  \be
  \bearl
  \mBlF(\mss12)\,(f_1{\otimes}f_2)
  \Nxl2 \hsp4
  := \big[\, \id^{}_{U} \otimes ( d_{F^{-\Wee}_{}} \cir \jef )
  \otimes \id^{}_{Y \otimes V \otimes X} \big]
  \circ \big[\, \id^{}_{U \otimes F\wee_{}} \otimes \mZ^{}_{V.F^{-\Wee}_{}\otimes Y}(f_2)
  \otimes \id^{}_{X} \big] \circ f_1 \,.
  \eear
  \labl{eq:sewmap1}
To a sewing $\ms \colon (\surf,\mark) \To (\surf',\mark') \eq
\Sqcup_{\beta,\gamma} (\surf,\mark)$ of the type \erf{elemsurf2}, for which
$\mBlF\surfm \eq \HomD
    $\linebreak[0]$
(\one,U\oti F^{\Wee} \oti V \oti F^{-\Wee} \oti W)$ with appropriate $U,V,W \iN \D$
and $\eps \iN \{\pm1\}$, assign the linear map $\mBlF(\ms)$ that acts as
  \be
  \mBlF(\ms)\,(f)
  := \big[\, \id^{}_{U \otimes V} \otimes ( \iKFe \cir \jef ) \otimes \id^{}_W \big] \circ
  \big[\, \id^{}_U \otimes c^{}_{F^{\Wee}_{},V} \otimes \id^{}_{F^{-\Wee}_{} \otimes W} \big]
  \circ f 
  \labl{eq:sewmap2}
on $f \iN \mBlF\surfm$.
\end{defi}

Note that the linear map $\phi \eq\mBlF(\mss12)(f_1{\otimes}f_2)$ 
is by definition an element of the space $\HomD(\one,U\oti Y\oti V\oti X)$. This
morphism space is isomorphic
to the space of blocks for the marked surface \surfm, and is equal to the space 
$\mBlF(\surf,\mark')$ of blocks for some marking $\mark'$ on $\surf$. 
In the sequel we tacitly identify $\phi$ with $\psi \cir \phi \iN \mBlF\surfm$, where
$\psi\colon \mBlF(\surf,\mark') \To \mBlF\surfm$ is the distinguished isomorphism 
assigned by Definition \ref{def:mBlF2} to the move that transforms $\mark'$ to $\mark$.
Also recall that there are Fubini theorems which provide
unique isomorphisms between different realizations of $\mBlF\surfm$ as
morphism spaces of \D. By adequately composing with such isomorphisms as well as 
suitable Z-isomorphisms one can restrict to the case $V \eq \one$ in the
definition of $\mBlF(\ms)$.

We are now ready to state the following result, which may be seen as a counterpart of 
Theorem 8.1 of \cite{lyub11} in the framework of finely marked surfaces used here:

\begin{prop}\label{prop:mBlFfunctor}
Define $\mBlF\colon \mSurfC \To \Vect$ as follows.
\\[2pt]
{\rm (i)}\, $\mBlF$ acts on objects of the category \mSurfC\ as prescribed in 
Definition {\rm \ref{def:mBlF1}}.
\\[1pt]
{\rm (ii)}\, $\mBlF$ acts on moves as prescribed in Definition {\rm \ref{def:mBlF2}},
on elementary sewings as in Definition {\rm \ref{def:mBlF3}},
and on the morphisms $\mC_{\surfm}$ as in formula {\rm \erf{eq:mBlF-C}}.
\\[1pt]
{\rm (iii)}\, To any presentation of a morphism of \mSurfC\ as a word in the morphisms 
from {\rm (ii)}, $\mBlF$ assigns the corresponding composition of linear maps.
\\[3pt]
Then $\mBlF$ constitutes a symmetric monoidal functor from \mSurfC\ to \Vect.
\end{prop}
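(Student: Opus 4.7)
The plan is to establish functoriality by verifying that the assignment on generators in (ii) respects every defining relation of $\mSurfC$; once this is done, clause (iii) determines $\mBlF$ uniquely, and the symmetric monoidal structure follows at once from Definition \ref{def:mBlF1}. The relations fall into five groups: those among elementary moves within a single groupoid $\CWm(\surf)$; the modified modular relation $(\mathrm W12)\cc$; commutativity and associativity between sewings performed in different orders or disjoint from one another; compatibility between admissible moves and sewings; and the central-element relations \erf{eq:mC=Cm} and \erf{eq:lyub11-7.2}.

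The first group is precisely the content of Lemma \ref{lem:W1-11-13}. For $(\mathrm W12)\cc$ the identification of $\mBlF(\mS)$ with post-composition by $\id\oti S^K$ and of $\mBlF(\mT_c)$ with post-composition by $\id\oti T^K$, carried out in the paragraph following Lemma \ref{lem:W1-11-13}, combined with \erf{eq:zeta} gives $(\mBlF(\mS)\cir\mBlF(\mT_c))^3 \eq \zeta\, \mBlF(\mS)^2$; the genus-one case of the prescription \erf{eq:mBlF-C} turns this precisely into $(\mathrm W12)\cc$. For the compatibility of independent sewings I would invoke the Fubini theorems \erf{eq:Fubini} and \erf{eq:Fubini2}: the maps \erf{eq:sewmap1} and \erf{eq:sewmap2} are the dinatural structure morphisms of the coends used in Section \ref{sec:mBl}, and Fubini ensures that iterating such morphisms in different orders yields the same linear map up to canonical identifications. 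The additivity of the genus exponent $g$ in \erf{eq:mBlF-C} under sewing of connected components delivers \erf{eq:lyub11-7.2}, and \erf{eq:mC=Cm} holds because post-composition by an endomorphism of $K$ on a distinguished tensor factor commutes with the move-induced isomorphisms acting on the other factors.

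The main obstacle is the compatibility of sewings with admissible moves: one must show that performing a move $\mathrm m$ on $\surfm$ and then sewing agrees with sewing and then applying the induced move on the sewn surface. For moves supported away from the sewn boundary circles this is immediate from the naturality of the dinatural families underlying the coends \erf{deltaHom} and \erf{oint=HomK}. For moves straddling a sewn circle the verification reduces to the same braiding, ribbon, and Hopf-algebra identities used in Lemma \ref{lem:W1-11-13}, now inserted into the dinatural structure of the coend \erf{oint=HomK} in the case of a sewing of type \erf{elemsurf2} and of \erf{deltaHom} for a sewing of type \erf{elemsurf1}. Once these identities are checked, symmetric monoidality is automatic: by construction $\mBlF$ sends disjoint unions of marked surfaces to $\otik$ in $\Vect$, the empty surface to $\ko$, and the evident braiding of $\mSurfC$ to the symmetry of $\Vect$, so the functor is symmetric monoidal.
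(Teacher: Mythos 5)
Your overall strategy --- check that the generator assignments respect every defining relation of \mSurfC, so that the prescription in (iii) is independent of the chosen presentation --- is exactly the paper's, and most of the individual steps coincide: the move relations via Lemma \ref{lem:W1-11-13}, $(\mathrm W12)\cc$ via \eqref{eq:zeta} together with \eqref{eq:mBlF-C}, the relations among sewings via the Fubini theorems, and monoidality via \eqref{eq:Bl-sqcup}. The one place where you take a genuinely different route is the compatibility of admissible moves with elementary sewings. You propose a direct case analysis (naturality of the dinatural families for moves supported away from the seam, and explicit braiding/ribbon/Hopf identities for the rest), whereas the paper exploits that cutting and sewing are mutually inverse: the sewn surface's marking retains the glued circle as a cut, so any relation between a move and an elementary sewing is equivalent to a relation among moves of the sewn surface and is therefore already covered by Lemma \ref{lem:W1-11-13}. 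The paper's reduction is slicker and spares you from re-deriving the identities of that Lemma inside the dinatural structure of \eqref{deltaHom} and \eqref{oint=HomK}; your version is more laborious but viable. One caution: your one-sentence dispatch of \eqref{eq:lyub11-7.2} via ``additivity of the genus exponent'' does not by itself close that relation for independent integers $k_1,k_2$ --- with $\mBlF(\mC_\surfm)$ acting as multiplication by $\zeta^{g}$, the two sides of \eqref{eq:lyub11-7.2} carry the exponents $k_1g_1+k_2g_2$ and $(k_1+k_2)(g_1+g_2)$, which do not agree in general --- so this step deserves a more careful look (the paper's proof is silent on this relation, so the issue is at least partly inherited from the source).
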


\begin{proof}
To any presentation of a morphism $\mathrm m\colon (\surf_1,\mark_1) \To (\surf_2,\mark_2)$ 
of \mSurfC, $\mBlF$ assigns a linear map $\mBlF(\surf_1,\mark_1) \To \mBlF(\surf_2,\mark_2)$.
To prove that $\mBlF$ is a well-defined, we must show that this linear map in fact depends 
only on $\mathrm m$, but not on the chosen presentation of $\mathrm m$ or, equivalently,
that the prescriptions (ii) for specific types of morphisms respect all relations.
Then $\mBlF$ is in particular also compatible with the composition of morphisms.
\\[2pt]
Compatibility with all relations among (not necessarily admissible) moves except those
involving the relation $(\mathrm W12)\cc$ holds by Lemma \ref{lem:W1-11-13}.
Compatibility with $(\mathrm W12)\cc$ holds as well: the linear maps obtained by
applying $\mBlF$ to the left and right hand sides of $(\mathrm W12)\cc$
are equal owing to \erf{eq:zeta} and \erf{eq:mBlF-C}.
Compatibility with all relations among sewings is guaranteed by Fubini theorems.
Finally, the fact that cutting and sewing are inverse operations allows one to reduce any 
relation between moves and an elementary sewing to a relation among moves of the sewn surface.
\\[2pt]
The symmetric monoidal structure is implied by \erf{eq:Bl-sqcup}: we have 
$\mBlF(\emptyset) \,{=}\, \ko$ as well as
  $$
  \mBlF\big( (\surf_1,\mark_1) \sqcup (\surf_2,\mark_2)\big)
  = \mBlF(\surf_1,\mark_1) \otik \mBlF(\surf_2,\mark_2) \,,
  $$
and analogously for morphisms. 
\end{proof}

 % Proposition \ref{prop:mBlFfunctor}
 % may be seen as an analogue of Theorem 8.1 of \cite{lyub11} in the framework of finely
 % marked surfaces used here. From the results of \cite{lyub11} it then follows further that
 % there is a unique left exact symmetric monoidal functor from \mSurfC\ to \Vect\ 
 % satisfying the requirements (i) and (ii) of the proposition.

Now recall that our goal is to study bulk correlators for a conformal field theory.
These are assigned to (extended) surfaces, rather than to marked surfaces. Accordingly
we are really looking for a symmetric monoidal functor $\BlF\colon \SurfC \To \Vect$, rather 
than the functor $\mBlF$ constructed above for marked surfaces. 
But we can obtain $\BlF$ easily from $\mBlF$, namely as a Kan extension:

\begin{prop}
{\rm (i)}\, The right Kan extension
  \be
  \begin{tikzcd}
  \mSurfC \ar{rr}{\mBlF}[name=endNatTraf,below]{} \ar{d}[swap] {\UC} &~& \Vect
  \\
  \SurfC \ar[dashed]{urr}[swap,xshift=-6pt] {\BlF}
  \ar[Rightarrow,xshift=-5pt,to path=-- (endNatTraf) \tikztonodes]{}{}
  \end{tikzcd}
  \labl{eq:theKan}
of $\mBlF$ along the unmarking functor $\UC$ exists. 
\\[3pt]
{\rm (ii)}\, The so defined functor $\BlF\colon \SurfC \To \Vect$ has a natural 
symmetric monoidal structure.
\end{prop}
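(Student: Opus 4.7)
For part (i), the plan is to invoke the pointwise formula
\[
\BlF(\surf) \,=\, \lim_{(\surfm,\phi)\,\in\,(\surf\downarrow\UC)}\!\!\! \mBlF(\surfm)
\]
for the right Kan extension and to argue that this limit exists in $\Vect$ by exhibiting an initial subcategory of manageable shape. Since every extended surface admits a fine marking (Section \ref{sec2}), the full subcategory $\mathcal J_\surf \,{\subset}\, (\surf \,{\downarrow}\, \UC)$ spanned by pairs $(\surfm,\id_\surf)$ with $\UC(\surfm) \eq \surf$ is non-empty. I would first verify that $\mathcal J_\surf$ is initial, so that the limit can be computed over it: every object $(\surfm,\phi)$ receives a canonical morphism from some $(\surfm_0,\id_\surf)\in\mathcal J_\surf$ by lifting $\phi\in\SurfC(\surf,\UC(\surfm))$ to a morphism $\surfm_0\To\surfm$ in $\mSurfC$, which exists because $\UC$ is essentially full by Definition \ref{def:unmarking} and is unique up to the relations of Section \ref{sec:fine} together with the central extension of Definition \ref{def:mSurfC}.

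Given initiality, Lemma \ref{lem:W1-11-13} together with formula \erf{eq:mBlF-C} ensures that $\mBlF$ sends every morphism in $\mathcal J_\surf$ to a linear isomorphism, so the limit collapses to the subspace of $\mapsC$-invariants inside $\mBlF\surfm$, where the action of the centrally extended mapping class group $\mapsC$ on $\mBlF\surfm$ is the one provided by Definition \ref{def:unmarking} and Proposition \ref{prop:mBlFfunctor}. Finite-dimensionality of $\BlF(\surf)$ then follows from that of $\mBlF\surfm$, and functoriality of $\BlF$ on the morphisms of $\SurfC$ is inherited from the action of $\mBlF$ on sewings (Definition \ref{def:mBlF3}), with the central compatibility \erf{eq:lyub11-7.2} ensuring that sewings restrict consistently to invariants.

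For part (ii), I would transport the symmetric monoidal structure of $\mBlF$ (Proposition \ref{prop:mBlFfunctor}) to $\BlF$. Since both $\UC$ and $\mBlF$ are symmetric monoidal with disjoint union as the tensor product, there is a natural product functor
\[
(\surf_1 \,{\downarrow}\, \UC) \Times (\surf_2 \,{\downarrow}\, \UC) \,\longrightarrow\, (\surf_1 \sqcup \surf_2 \,{\downarrow}\, \UC)
\]
that, combined with the canonical isomorphism $\mBlF\surfm \otik \mBlF\surfmp \Cong \mBlF(\surfm \sqcup \surfmp)$, induces a comparison map $\BlF(\surf_1) \otik \BlF(\surf_2) \To \BlF(\surf_1 \sqcup \surf_2)$. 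That this map is an isomorphism reduces to the factorization of the centrally extended mapping class group of a disjoint union as the product of those of its components modulo the joint central extension, which is immediate from Definition \ref{def:maps} and the relation \erf{eq:lyub11-7.2}; the coherence axioms (associativity, unit, symmetry) are inherited directly from those of $\mBlF$. The main obstacle I expect is the initiality step of (i): one must keep careful track of how sewings interact with markings (Definition \ref{def:standardmarking}(iv)) and how the admissible moves generate the mapping class group action (Definition \ref{def:unmarking}), in order to show that every object of the comma category is connected to $\mathcal J_\surf$ in a way that is unique modulo the relations defining $\mSurfC$. Once this combinatorial step is in place, the remaining verifications are essentially formal.
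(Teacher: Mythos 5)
Your strategy for (i) --- compute the right Kan extension pointwise as a limit over the comma category and reduce to the fibre $\mathcal J_\surf$ over $\id_\surf$ --- is essentially the paper's, and the reduction itself is sound. The fatal step is the conclusion that ``the limit collapses to the subspace of $\mapsC$-invariants inside $\mBlF\surfm$''. It does not. By your own definition, every morphism of $\mathcal J_\surf$ is a morphism $h$ of $\mSurfC$ with $\UC(h)\eq\id_\surf$, i.e.\ a move with \emph{trivial} underlying mapping class (generated by F-moves and combinations whose total mapping class vanishes); the nontrivial mapping classes are not automorphisms inside the indexing category $\mathcal J_\surf$, but morphisms of $\SurfC$ over which the various lifts sit as \emph{distinct} objects of the comma category, so they impose no invariance constraint on the limit. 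Since $\mathcal J_\surf$ is connected and simply connected and $\mBlF$ sends all of its morphisms to the distinguished isomorphisms between the spaces $\mBlF(\surf,\mark)$ for varying $\mark$, the limit is canonically the \emph{full} space $\mBlF(\surf,\marke)$ for any reference marking $\marke$ --- which is exactly how the paper realizes it --- and $\mapsC$ then acts on this limit through the functoriality of $\BlF$. Note that your two assertions even contradict each other: a limit over a contractible diagram all of whose arrows go to isomorphisms is any one of its vertices, not an invariant subspace. The invariant-subspace reading is also incompatible with the rest of the paper: if $\BlF(\surf)$ were already the invariants, the condition $\BlF(\phi)(\vsms)\eq\vsms$ in Definition~\ref{def:Corr} would be vacuous and the modularity condition of Theorem~\ref{thm-g} (which is precisely $S$-invariance of the torus correlator) could not arise.

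Once $\BlF(\surf)\eq\mBlF(\surf,\marke)$ is in place, part (ii) is much simpler than what you propose: choosing the reference markings multiplicatively, $\mark_{\!\surf\sqcup\surf'}\df\marke\,{\sqcup}\,\markep$, gives $\BlF(\surf\,{\sqcup}\,\surf')\eq\mBlF((\surf,\marke)\,{\sqcup}\,(\surf',\markep))\eq\BlF(\surf)\otik\BlF(\surf')$ on the nose by \erf{eq:Bl-sqcup}, so $\BlF$ is strictly symmetric monoidal. No factorization of centrally extended mapping class groups of disjoint unions is needed; indeed, had the invariant-subspace description been correct, that step would have been delicate, since \erf{eq:lyub11-7.2} identifies the central generators of the two components rather than keeping them independent.
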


\begin{proof} (i)\,
For each $\surf \iN \SurfC$ consider the natural projection $Q_\surf\colon (f,\surfpm)
\,{\mapsto}\, \surfpm$ from the comma category $\comcat\surf$ to \mSurfC.
The limit of the functor $\mBlF \cir Q_\surf\colon \comcat\surf \To \Vect$ not only
exists (as for any functor to \Vect), but it can also be realized concretely. Indeed,
since up to unique isomorphism $\mBlF\surfm$ only depends on the underlying surface 
$\surf$ of \surfm, as a vector space the limit can be realized, up to distinguished
isomorphism, simply by $\mBlF(\surf,\marke)$ for any reference choice of fine marking 
$\marke$ on $\surf$. 
 % The cone morphism $\mBlF(\surf,\marke) \To \mBlF\surfpm$ is then given by
 % $\mBlF(f)$ for $f\colon (\surf,\marke) \To \surfpm$ (in case $f$ involves multiple
 % sewings, it is not unique, but different choices of $f$ result in the same
 % map $\mBlF(f)$ owing to the Fubini theorem for the corresponding coends).
 % the latter by the proof of Proposition \ref{prop:mBlFfunctor}.
 %
It follows \Cite{Thm.\,X.3.1}{MAcl} that the right 
Kan extension of $\mBlF$ along $\UC$ exists and is realized by taking the limit 
of $\mBlF \cir Q_\surf$ at each object and each morphism of \SurfC.
\\[2pt]
(ii)\, We are free to choose the auxiliary markings $\marke$ arbitrarily for all connected
surfaces $\surf$ and to set $\mark_{\!\surf\sqcup\surf'} \,{:=}\, \marke \,{\sqcup}\, \markep$.
Then we have
  $$
  \bearll
  \BlF(\surf{\sqcup}\surf') \!\!\!& = \mBlF(\surf{\sqcup}\surf',\mark_{\!\surf\sqcup\surf'})
  = \mBlF((\surf,\marke) \,{\sqcup}\, (\surf',\markep)) 
  \Nxl3&
  = \mBlF(\surf,\marke) \otik \mBlF(\surf',\markep) = \BlF(\surf) \otik \BlF(\surf') \,.
  \eear
  $$
Thus indeed $\BlF$ is (strict) symmetric monoidal.
\\
(With the same prescription of auxiliary data the natural transformation
$\psi\colon \BlF\cir\UC \,{\Rightarrow}\, \mBlF$ that is part of the Kan extension is 
monoidal as well: 
By construction the linear map $\psi_\surfm^{}$ is nothing but the distinguished isomorphism 
$\mBlF(\surf,\marke) \,{\xrightarrow{~\cong~}}\, \mBlF\surfm$. It thus follows directly that 
$\psi_{(\surf\sqcup\surf',\mark\sqcup\mark')}^{}
\eq \psi_\surfm^{} \otik \psi_{(\surf',\mark')}^{}$.)
 % General conditions for the left (and then similarly, right)
 % Kan extension of a symmetric monoidal functor $F\colon \mathcal C \To \mathcal E$
 % along a symmetric monoidal functor $G$ to be symmetric monoidal are formulated in
 % \cite{koud}.
\end{proof}

 % the comma category $\comcat\surf$ has: \\
 % objects are pairs $(f',\surfpm)$ with $\surfpm \iN \mSurfC$ and 
 % $f'\colon \surf \To \UC\surfpm \eq \surf'$ a morphism in \SurfC; \\
 % morphisms $\varphi\colon (f',\surfpm) \To (f'',\surfppm)$ are morphisms
 % $\varphi\colon \surfpm \To \surfppm$ in \mSurfC\ satisfying $\UC(\varphi) \cir f' \eq f''$.

%%%%%%%%%%%%%%%%%%%%%%%%%%%%%%%%%%%%%%%%%%%%%%%%%%%%%%%%%%%%%%%%%%%%%%%%%%%%%%%%%%

\subsection{Systems of correlators as natural transformations}

  % NB: cf ncatlab zu ffrs - ... ?

Having at hand the functor $\BlF$ it is easy to formulate the consistency conditions that 
have to be obeyed by a system of correlators: The correlator $\vsms$ for a surface $\surf$ 
is an element of the space $\BlF(\surf)$; it is required that $\vsms$ is invariant under 
the centrally extended mapping class group of $\surf$,
  $$
  \BlF(\phi)\,\big(\vsms\big) = \vsms \qquad{\rm for~any}\quad \phi \iN\mapsC \,,
  $$
and that correlators for surfaces that are related by sewing are mapped to each other,
  $$
  \BlF(\mathsf s)\,\big(\vsms\big) = \vsmsp \qquad{\rm for~any~sewing}\quad
  \mathsf s\colon \surf\To\surf' \,.
  $$
In addition, to exclude degenerate solutions, we require that the endomorphism of $F$
that is provided by the correlator for a
cylinder $\gpq 011$, i.e.\ for a sphere with one ingoing and one outgoing boundary circle,
is invertible. (In the conformal field theory literature, this condition is known
as non-degeneracy of the two-point function of bulk fields,
see e.g. \cite[Thm.\,4.26]{fjfrs2} for a succinct statement.)

Let us rewrite these conditions more compactly. We introduce the constant symmetric 
monoidal functor
that assigns the ground field to each object and the identity morphism to each morphism, 
  $$
  \begin{array}{rcl}
  \One :\quad  \SurfC &\!\!\longrightarrow\!\!& \Vect \Nxl1
  \surf &\!\!\longmapsto\!\!& \ko \Nxl1
  \surf\,{\stackrel\varphi\to}\surf' &\!\!\longmapsto\!\!& \id_\ko
  \eear
  $$
We can then give  

\begin{defi}\label{def:Corr}
Let \D\ be a modular finite ribbon category and $F\iN \D$ an object.
A \emph{consistent system of bulk field correlators} for monodromy data based on \D\ 
and with bulk object $F$ is a monoidal natural transformation
  \be
  \Corr:\quad \One \Rightarrow \BlF
  \labl{eq:Corr}
for which the linear map $(\id_F \oti d_F) \cir (\Corr(\gpq 011) \oti \id_F) \iN \EndD(F)$
is invertible.
\end{defi}

Here and below we identify the linear map $\Corr(\surf)$ with its value at 
$1 \iN \ko \eq \One(\surf)$. 

    \medskip

As in the construction of the pinned bock functor, for examining such natural 
transformations it is advantageous to also work with marked surfaces. Thus we introduce 
the constant functor $\mOne\colon \mSurfC\To\Vect$ that again maps every object to 
\ko\ and every morphism to $\id_\ko$, and consider monoidal natural transformations
  \be
  \mCorr:\quad \mOne \Rightarrow \mBlF .
  \labl{eq:mCorr}
Once we are given such a monoidal natural transformation $\mCorr$, we can obtain a 
corresponding consistent system $\Corr$ of correlators by invoking the defining 
universal property of the Kan extension \erf{eq:theKan}.
Indeed, the two constant functors $\One$ and $\mOne$ are related by a right Kan extension
  $$
  \begin{tikzcd}
  \mSurfC \ar{rr}{\mOne}[name=endNatTrafb,below]{} \ar{d}[swap]{\UC} &~& \Vect
  \\
  \SurfC\, \ar[yshift=-3pt,dashed]{urr}[swap,xshift=-2pt] {\One}
  \ar[Rightarrow,xshift=-5pt,to path=-- (endNatTrafb) \tikztonodes]{}[description]{\idsm}
  \end{tikzcd}
  $$
with trivial natural transformation. When composed with a natural transformation 
$\mCorr\colon \mOne \,{\Rightarrow} % \,
      $\linebreak[0]$
\mBlF$, this gives the diagram
$$
  \begin{tikzcd}
  \mSurfC \ar{rr}{\mBlF}[name=endNatTrafb,below]{} \ar{d}[swap]{\UC} &~& \Vect
  \\
  \SurfC\, 
  % \ar[yshift=-3pt]{urr}[swap,yshift=1pt] {\One}
  %%% \ar[in=270, out=353]{urr} [xshift=8pt,yshift=2pt]{\One}  % XXX arxiv problem
  \ar[in=270, out=353]{urr}{\One}
  \ar[Rightarrow,xshift=5pt,to path=-- (endNatTrafb) \tikztonodes]{}[description]{\mCorr}
  \end{tikzcd}
  $$
By the universal property of $\BlF$ as a right Kan extension there then exists,
uniquely up to unique natural isomorphism, a natural transformation 
$\Corr\colon \One \,{\Rightarrow}\, \BlF$ such that $\mCorr$ is given by the composition
\be
  \begin{tikzcd}
  \mSurfC \ar{rr}{\mBlF}[name=endNatTrafb,below]{} \ar{d}[swap]{\UC} &~& \Vect
  \\[8pt]
%%% \SurfC \ar[ yshift=-3pt]{urr}[name=endCorr,below]{} [xshift=27pt,yshift=8pt]{\BlF}
%%% \ar[in=270, out=353]{urr}[name=beginCorr]{} [swap,xshift=-10,yshift=20pt]{\Corr~~~~~~\One}
                                                               % XXX arxiv problem
  \SurfC \ar[ yshift=-3pt]{urr}[name=endCorr,below]{}{}
  \ar[in=270, out=353]{urr}[name=beginCorr]{} {}
  \ar[Rightarrow,xshift=-5pt,to path=-- (endNatTrafb) \tikztonodes]{}[description]{\psi}
  \ar[Rightarrow,to path=(beginCorr) -- (endCorr)] {}
  & ~ & \begin{picture}(0,0) \put(-6,19) {$\scriptstyle \One$}
  \put(-41,19){$\scriptstyle \Corr$}
  \put(-55,34){$\scriptstyle \BlF$} \end{picture}
  \end{tikzcd}
  \labl{eq:Kan4Corr}
with $\psi$  %% \colon \BlF\cir\UC \,{\Rightarrow}\, \mBlF$ 
the natural transformation that is part of the Kan extension \erf{eq:theKan}. 
Also, if the natural transformation $\mCorr$ is monoidal, then so is $\Corr$, and if 
$\mCorr(\gpq 011,\mark)$ for any marking $\mark$ is invertible, then so is $\Corr(\gpq 011)$.

We will refer, analogously as in Definition \ref{def:Corr}, to a natural transformation 
$\mCorr\colon \mOne \,{\Rightarrow}\, \mBlF$ as a \emph{consistent system of correlators on
marked surfaces} or, slightly abusing terminology, just as a consistent system of correlators.

%%%%%%%%%%%%%%%%%%%%%%%%%%%%%%%%%%%%%%%%%%%%%%%%%%%%%%%%%%%%%%%%%%%%%%%%%%%%%%%%%%

\section{Consistent systems of correlators}\label{sec4}

We are now in a position to address our primary goal: to formulate necessary and sufficient 
conditions for the existence of a consistent system of bulk field correlators with
bulk object $F$, expressed as a monoidal natural transformation \erf{eq:Corr}.
To achieve this, taking advantage of the results of Section \ref{sec3}, we work with
the category \mSurfC\ of marked surfaces instead of \SurfC, and thus consider instead
of \erf{eq:Corr} a monoidal natural transformation $\mCorr$ \erf{eq:mCorr}.
That $\mCorr$ is a natural transformation from $\mOne$ to $\mBlF$ means that the square
  \be
  \begin{tikzcd}
  \ko \ar{d}[left]{\mCorr\surfm} \ar{rr}{\idsm_\ko} &~&
  \ko \ar{d}{\mCorr(\surf',\mark')}
  \\
  \mBlF\surfm \ar{rr}{\mBlF(f)} &~& \BlF(\surf',\mark')
  \end{tikzcd}
  \labl{eq:f-square}
commutes for any morphism $f$ of \mSurfC. Moreover, by the construction of the 
functor $\mBlF$, such a natural transformation is already completely characterized by 
commutativity of this square for all generating morphisms $f$ (see Section \ref{sec:fine}
and formulas \erf{elemsurf1} and \erf{elemsurf2}) of \mSurfC.

%%%%%%%%%%%%%%%%%%%%%%%%%%%%%%%%%%%%%%%%%%%%%%%%%%%%%%%%%%%%%%%%%%%%%%%%%%%%%%%%%%

\subsection{Elementary correlators}

For analyzing the condition \eqref{eq:f-square} it is worth recalling the basic idea 
of the construction of conformal blocks in Section \ref{sec:mBl}:
First one defines conformal block functors for spheres with at most three holes, and then
these are used as building blocks to obtain the functors for arbitrary surfaces with the 
help of coend constructions. Inspired by this idea, here
we start by selecting vectors in the morphism spaces $\mBlF\surfm$
for $\surf$ a sphere with at most three holes and then obtain vectors in $\mBlF\surfm$
for any marked surface $\surfm$ which are uniquely determined by the requirement to
furnish a monoidal natural transformation $\mCorr\colon \mOne \,{\Rightarrow}\, \mBlF$.

In fact, denoting by $\gpq 0pq$ a sphere with $p$ incoming and $q$ outgoing boundary 
circles, we have 

\begin{prop}\label{prop:003-010-020}
A monoidal natural transformation $\,\mCorr\colon \mOne \,{\Rightarrow}\, \mBlF$
is completely determined by its values on the spheres $\gpq 003$, $\gpq 010$ and $\gpq 020$
with any choice of marking without cuts.
\end{prop}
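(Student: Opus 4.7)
The plan is to exhibit, for every marked surface $\surfm$, a composite morphism in $\mSurfC$ from a disjoint union of copies of the three generators $\gpq 003$, $\gpq 010$, $\gpq 020$ to $\surfm$, so that the three defining properties of a monoidal natural transformation $\mCorr\colon \mOne \Rightarrow \mBlF$ --- monoidality on disjoint unions, naturality on the elementary moves of Section \ref{sec:fine}, and naturality on the elementary sewings \erf{elemsurf1} and \erf{elemsurf2} --- force $\mCorr\surfm$ as soon as $\mCorr(\gpq 003)$, $\mCorr(\gpq 010)$, $\mCorr(\gpq 020)$ are specified for a single chosen marking without cuts on each of the three generators.

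First I would reduce to connected marked surfaces: monoidality gives $\mCorr(\surfm \sqcup \surfpm) \eq \mCorr\surfm \otik \mCorr\surfpm$ and $\mCorr(\emptyset) \eq 1 \iN \ko$, so every disconnected case is decided by the connected ones. For a connected $\surfm$ with fine cut system $C$, the cut surface $\oline{\surf}{C}$ is a disjoint union of marked spheres with at most three holes, and the inverse of the cutting operation is a composite of $|C|$ elementary sewings (of types $\mss12$ and $\ms$) in $\mSurfC$ realizing $\surfm$ from those pieces. Naturality of $\mCorr$ along this composite reduces the task to determining $\mCorr$ on every marked sphere with at most three holes.

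Next I would show that those remaining values are themselves forced by the three generators. By naturality on the elementary moves --- Z-, B-, A-, and F-moves on spheres --- it is enough to treat a single marking without cuts for each topological type $\surf^0_{p|q}$ with $p \pl q \leq 3$. The type $\gpq 003$ is itself a generator. The remaining three-holed cases $\gpq 012$, $\gpq 021$, $\gpq 030$ are built iteratively by sewing one outgoing boundary of the current sphere to an incoming boundary of $\gpq 020$, which replaces that outgoing boundary by an incoming one and creates a new cut, followed by an F-move that contracts the cut. Analogously, sewings with $\gpq 010$ produce $\gpq 002$, then $\gpq 001$, and then $\gpq 000$, while $\gpq 011$ arises from $\gpq 002 \sqcup \gpq 020$ by the same flip-and-contract recipe. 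At each step, naturality on the sewing and on the follow-up F-move forces the value of $\mCorr$ on the target in terms of the values on the inputs.

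The main obstacle will be the combinatorial bookkeeping for markings, orientations of boundary circles, and distinguished edges through each sewing and follow-up move: the marking of the output of an elementary sewing is in general neither a marking without cuts nor the reference marking of the intended target, and the F- and Z-moves that correct it must connect these markings inside the groupoid $\CWm(\surf)$ without invoking any correlator that has not already been pinned down. This is essentially a Lego-Teichm\"uller presentation of $\mCorr$ built from the three elementary blocks, and the delicacy lies in verifying that no composite introduces independent new data --- in particular that the central elements $\mC_\surfm$ contribute only consistency conditions (via $(\mathrm W12)\cc$ and \erf{eq:lyub11-7.2}) rather than new values to be specified.
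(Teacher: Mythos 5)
Your proposal is correct and follows essentially the same route as the paper: reduce to connected surfaces by monoidality, use naturality along the sewings that invert the fine cut decomposition to reduce to spheres with at most three holes, and then generate all such spheres from $\gpq 003$, $\gpq 010$, $\gpq 020$ by sewing on copies of $\gpq 010$ and $\gpq 020$ followed by F-moves that remove the resulting cuts (this last step is the paper's Lemma \ref{lem:anysphere-from-3}). Your closing remarks on the marking bookkeeping and the harmlessness of the central generators match the paper's own caveats, including its observation that the cut created by an elementary sewing is removed via compatibility with the F-move.
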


\begin{proof}
Commutativity of \erf{eq:f-square} for $\surf \eq \surf'$ on these three surfaces and 
for $f$ the Z-isomorphism \erf{def-Z-iso} amounts to the statement that the values of 
$\mCorr$ on these surfaces indeed do not depend on a choice of marking without cuts on them.
Further, that the natural transformation $\mCorr$ is (strictly) monoidal simply means that
  $$
  \mCorr\big( (\surf_1,\mark_{\!1}) \,{\sqcup}\, (\surf_2,\mark_{\!2}) \big)
  = \mCorr(\surf_1,\mark_{\!1}) \otimes_\ko \mCorr(\surf_2,\mark_{\!2}) \,.
  $$
As a consequence we can restrict our attention to connected surfaces.
\\[2pt]
Thus let $(\surf,C,\mark)$ be any connected marked surface with non-empty fine cut system 
$C$ and $c \iN C$ any of the cuts. Then by compatibility with sewing, the correlator for
$(\surf,C,\mark)$ must coincide with the vector obtained from the correlator for
the cut surface $\oline\surf{\{c\}}$ by applying the linear map that by Definition 
\ref{def:mBlF3} is associated to the sewing $\mathsf s\colon \oline\surf{\{c\}} \To \surf$:
  \be
  \mCorr(\surf,C,\mark) = \mBlF(\mathsf s) \circ \mCorr(\oline\surf{\{c\}})
  \labl{eq:mCorr-cut}
(in the notation we suppress the marking of $\oline\surf{\{c\}}$, which is completely 
determined by the one of $\surf$).
 % \\
Repeating this prescription for every cut in $C$, $\mCorr(\surf,C,\mark)$ gets expressed
through the correlators for spheres that have at most three holes and markings without
cuts. To complete the proof we need to show that all the latter correlators can, in turn,
be expressed through the three correlators $\mCorr(\gpq 003)$, $\mCorr(\gpq 010)$ and 
$\mCorr(\gpq 020)$. This will be done separately in Lemma \ref{lem:anysphere-from-3} below.
\end{proof}

Note that a priori the so defined vectors could depend on the order in which the cuts 
are removed. Recall, however, that according to Definition \ref{def:mBlF3}
the linear maps $\mBlF(\mathsf s)$ are given by the dinatural structure
morphisms of coends (see formulas \erf{eq:sewmap1} and \erf{eq:sewmap2}).
Independence of the ordering is thus in fact guaranteed by the Fubini 
theorems \eqref{eq:Fubini} and \eqref{eq:Fubini2}.

\medskip

To formulate the postponed part of the proof it will be convenient to 
work with the three morphisms
  \be
  \bearl
  \omF := \mCorr(\gpq 003) \,\in \HomD(\one,F\oti F\oti F) \,,
  \Nxl3
  \epF := d_F \cir (\mCorr(\gpq 010) \oti \id_F) \,\in \HomD(F,\one)
  \qquad\quad{\rm and}
  \Nxl3
  \PhF := (\id_{F^\vee_{}}^{} \oti d_F)\cir (\mCorr(\gpq 020) \oti \id_F)
  \,\in \HomD(F,F^\vee_{}) \,.
  \eear
  \labl{om,eps,Phi}

 % Here a choice of marking $\mark$ without cuts on $\gpq 0pq$ is implicit; this marking is 
 % irrelevant, though, which precisely accounts for invariance under the Z-isomorphism.

\begin{lem}\label{lem:anysphere-from-3}
Let $\mCorr$ be a natural transformation $\mCorr\colon \mOne \,{\Rightarrow}\, \mBlF$.
For $\surf$ any sphere with at most three holes and $\mark$ any fine marking without cuts
on $\surf$, $\mCorr(\surf,\mark)$ can be expressed through the three morphisms 
{\rm \erf{om,eps,Phi}}. Specifically,
\\[3pt]
{\rm (i)}\, 
To $(\gpq 002,\mark)$, $\mCorr$ assigns the vector
  \be
  (\epF \oti \id_F \oti \id_F) \circ \omF
  = (\id_F \oti \epF \oti \id_F) \circ \omF
  = (\id_F \oti \id_F \oti \epF) \circ \omF 
  \labl{eq:eii-om}
in $\HomD(\one,F\oti F)$.
\\[3pt]
{\rm (ii)}\, $\mCorr(\gpq 001,\mark) \eq (\epF \oti \epF \oti \id_F) \cir \omF$, and similarly 
as in {\rm \erf{eq:eii-om}} one gets the same vector in $\HomD(\one,F)$ if the occurrence 
of $\id_F$ in this expression is interchanged with any occurrence of $\epF$.
\\[3pt]
{\rm (iii)}\, $\mCorr(\gpq 000,\emptyset) \eq (\epF \oti \epF \oti \epF) \cir \omF$.
\\[3pt]
{\rm (iv)}\, $\mCorr(\gpq 011,\mark)$ is given by either $(\id_F \oti \epF \oti \PhF) \cir \omF$ 
or $(\PhF \oti \epF \oti \id_F) \cir \omF$, depending on whether the boundary circle on which 
the distinguished first edge of $\mark$ ends is outgoing or incoming. 
\\[3pt]
{\rm (v)}\, $\mCorr(\gpq 012,\mark)$ is given by either $(\PhF \oti \id_F \oti \id_F) \cir \omF$ 
or $(\id_F \oti \PhF\oti \id_F) \cir \omF$ or $(\id_F \oti \id_F 
     $\linebreak[0]$
\oti \PhF) \cir \omF$,
depending on whether the edge that ends on the incoming boundary circle 
is the first, second or last of the three edges of $\mark$.
\\[3pt]
{\rm (vi)}\, $\mCorr(\gpq 021,\mark)$ is given by either $(\id_F \oti \PhF \oti \PhF) \cir \omF$ 
or $(\PhF \oti \id_F \oti \PhF) \cir \omF$ or $(\PhF\oti \PhF 
     $\linebreak[0]$
\oti \id_F) \cir \omF$,
depending on whether the edge that ends on the outgoing boundary circle 
is the first, second or last of the three edges of $\mark$.
\\[3pt]
{\rm (vii)}\, $\mCorr(\gpq 030,\emptyset) \eq (\PhF \oti \PhF \oti \PhF) \cir \omF$.
\end{lem}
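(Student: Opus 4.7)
The key ingredient is the sewing compatibility \erf{eq:mCorr-cut}, which lets us express the correlator of any sphere by sewing elementary pieces onto $\gpq 003$. I would prove each of (i)--(vii) by exhibiting the target sphere as the result of sewing the three-holed sphere $\gpq 003$ together with either copies of the one-holed disk $\gpq 010$ (to cap boundary circles) or copies of the cylinder $\gpq 020$ (to convert an outgoing circle into an incoming one), and then computing the associated sewing linear map via Definition \ref{def:mBlF3}.

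First I would handle (i), the paradigm case. Pick any outgoing boundary of $\gpq 002$ and view it as arising from the sewing $\mathsf s\colon \gpq 003 \sqcup \gpq 010 \,{\to}\, \gpq 002$. By \erf{eq:mCorr-cut} and monoidality we have $\mCorr(\gpq 002,\mark) \eq \mBlF(\mathsf s)(\omF \oti \mCorr(\gpq 010))$. Unfolding the defining formula \erf{eq:sewmap1}, which contracts the paired $F$-factors through $d_F$, and using the definition $\epF \eq d_F \cir (\mCorr(\gpq 010) \oti \id_F)$ from \erf{om,eps,Phi}, gives one of the three expressions in \erf{eq:eii-om}. The equality of the three expressions is the statement that $\mCorr(\gpq 002,\mark)$ is independent of the marking without cuts, which was already established in the proof of Proposition \ref{prop:003-010-020} via naturality with respect to the Z-move. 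Items (ii) and (iii) then follow by iterating the capping procedure (sewing additional copies of $\gpq 010$), with the same Z-invariance argument producing the stated permutations.

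For (iv)--(vii), the mechanism is the same except that the sewing of a cylinder $\gpq 020$ onto an outgoing boundary of another sphere is what converts that boundary into an incoming one. Inspecting \erf{eq:sewmap1} in the case where one of the glued circles is incoming, the map $\jef$ (which includes the pivotal structure $\pi_F$) enters and precisely matches the definition of $\PhF \in \HomD(F,F^\vee)$ in \erf{om,eps,Phi}. Thus each incoming boundary contributes one factor of $\PhF$, while each ``missing'' boundary contributes one factor of $\epF$, which yields the formulas (iv)--(vii). The different orderings of the $\PhF$ and $\id_F$ slots (depending on where the incoming edge sits in the marking) again reflect the action of Z-moves on the positions of the edges.

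The main obstacle I expect is purely bookkeeping: keeping track of the combination of pivotal structure, duality, and cyclic ordering encoded by $\jef$ and by the edge order of the standard marking, so that the explicit images under $\mBlF(\mathsf s)$ match $(\epF \oti \id \oti \id) \cir \omF$, $(\PhF \oti \id \oti \id) \cir \omF$, etc., on the nose and not just up to canonical isomorphism. In particular, the various expressions within each item are shown to agree by the argument already used to prove (i), namely Z-invariance of $\mCorr$ on spheres with markings without cuts; this observation reduces the number of independent computations and is what makes the enumeration in (iv)--(vi) manageable.
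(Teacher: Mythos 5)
Your proposal follows essentially the same route as the paper: the paper likewise obtains (i)--(vii) by sewing copies of $\gpq 010$ and $\gpq 020$ onto $\gpq 003$ (and iteratively onto the resulting spheres, e.g.\ $\gpq 002 \,{\sqcup}\, \gpq 010 \,{\to}\, \gpq 001$ and $\gpq 012 \,{\sqcup}\, \gpq 020 \,{\to}\, \gpq 021$), reads off the factors $\epF$ and $\PhF$ from the explicit sewing maps of Definition \ref{def:mBlF3} combined with the pivotal structure, and uses Z-invariance to account for the different positions of these factors. The only detail you leave implicit is that the sewing first produces a marking \emph{with} a cut, which must then be removed by invoking compatibility with the F-move before the result can be identified with $\mCorr(\surf,\mark)$ for a marking without cuts.
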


 % Some of the $\PhF$'s are $\PhF^\vee \cir \pi_F$'s.
 % But as will be shown below, these two morphisms are equal.

\begin{proof}
(i)\, Both the equalities \erf{eq:eii-om} and the assertion that $\mCorr(\gpq 002,\mark)$
is given, for any $\mark$, by this vector follow by combining invariance under 
the Z-isomorphism and compatibility with sewing morphisms
$\mathsf s\colon \gpq 003 \,{\sqcup}\, \gpq 010 \,{\to}\, \gpq 002$.
To see this, one notes that according to Definition \ref{def:mBlF3},
$\mBlF(\mathsf s)$ acts on $\mCorr(\gpq 003) \oti \mCorr(\gpq 010)$ as 
post-composition by $\id_F \oti [ d_{F^\vee_{}}^{} \cir (\pi_F \oti \id_{F_{}}^{}) ]$,
and by the defining properties of the pivotal structure we have 
$d_{F^\vee_{}}^{} \cir (\pi_F \oti \mCorr(\gpq 010)) \eq \epF$.
\\
Note that in the first place sewing $\gpq 003$ to $\gpq 010$ yields a marking on 
$\gpq 002$ that has a cut. But by compatibility with the F-move (M3) this cut can
be omitted without changing the value of $\mCorr(\gpq 002)$.
\\[2pt]
(ii)\, is shown in the same way as (i), considering instead sewings
$\gpq 002 \,{\sqcup}\, \gpq 010 \,{\to}\, \gpq 001$.
\\[2pt]
(iii)\, follows from (ii) by compatibility with the sewing
$\gpq 001 \,{\sqcup}\, \gpq 010 \,{\to}\, \gpq 000$. 
 % and also using remark 2.5.
\\[2pt]
(iv)\,--\,(vii) follow in the same way as (i) when considering sewings
$\gpq 002 \,{\sqcup}\, \gpq 020 \,{\to}\, \gpq 011$, respectively
$\gpq 003 \,{\sqcup}\, \gpq 020 \,{\to}\, \gpq 012$, respectively
$\gpq 012 \,{\sqcup}\, \gpq 020 \,{\to}\, \gpq 021$, respectively
$\gpq 021 \,{\sqcup}\, \gpq 020 \,{\to}\, \gpq 030$.
\end{proof}

%%%%%%%%%%%%%%%%%%%%%%%%%%%%%%%%%%%%%%%%%%%%%%%%%%%%%%%%%%%%%%%%%%%%%%%%%%%%%%%%%%

\subsection{Correlators on surfaces of genus zero}

In view of Proposition \ref{prop:003-010-020} it is not hard to examine the implications
of the naturality requirement \erf{eq:f-square}. In particular, commutativity of 
\erf{eq:f-square} for $f \eq \mathsf s$ a sewing amounts to well-definedness of the
prescription given in the proof of Proposition \ref{prop:003-010-020} which,
as already noted, is ensured by the Fubini theorem \eqref{eq:Fubini}.
We are thus left with the case that $f$ is an elementary move of one of the
groupoids $\CWm(\surf)$ -- or rather, to be precise, either an admissible elementary
move or a simple combination of non-admissible elementary moves that is admissible. In
addition we must take care of the non-degeneracy requirement in Definition \ref{def:Corr}. 

In the present subsection we restrict our attention to moves involving genus zero surfaces 
only. Accordingly (compare Remark \ref{rem:g=0}) in this subsection we only assume that 
\D\ is a finite ribbon category.
Besides being a natural preparation for the general case, this restricted situation is
also of interest in applications of conformal field theories to critical phenomena in
statistical mechanics, as well as when discussing operator product expansions
\cite[Sect.\,4]{bepz} and correspondingly in the study of vertex operator algebras (see
e.g.\ \cite{huan7}).
Recall that the functor $\mBlF$ maps the elementary moves to the isomorphisms
\erf{def-Z-iso} (Z-isomorphism), \erf{def-B-iso} (B-iso\-morphism),
\erf{def-F-isoUV} (F-isomorphism) and \erf{def-A-iso} (A-isomorphism). We first show

\begin{lem}\label{lem:selfdual-etc}
Let $\mCorr$ be a consistent system of correlators on marked surfaces. Then
\\[3pt]
{\rm (i)}\,
The morphisms $\omF$ and $\epF$ defined in {\rm \erf{eq:eii-om}} are non-zero. 
\\[3pt]
{\rm (ii)}\,
The morphism $\PhF$ defined in {\rm \erf{eq:eii-om}} is an isomorphism.
\\[3pt]
{\rm (iii)}\,
$\omF \iN \HomD(\one,F^{\otimes 3})$ is invariant under any braiding of the three
$F$-strands.
\\[3pt]
{\rm (iv)}\,
The object $F \iN \D$ is self-dual and has trivial twist.
\end{lem}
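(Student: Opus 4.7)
The overall plan is to exploit the naturality square~\eqref{eq:f-square} of $\mCorr$ against a small number of generating morphisms of $\mSurfC$, combined with the non-degeneracy clause of Definition~\ref{def:Corr}. The natural logical order for the proof is (i), (ii), (iv), (iii), since the claim in (iii) will turn out to need the trivial twist from~(iv).

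For part~(i), Lemma~\ref{lem:anysphere-from-3}(iv) expresses $\mCorr(\gpq 011,\mark)$ as $(\id_F \oti \epF \oti \PhF) \cir \omF$, so the endomorphism $\chi := (\id_F \oti d_F) \cir (\mCorr(\gpq 011) \oti \id_F) \iN \EndD(F)$ factors linearly through each of $\omF$ and $\epF$; vanishing of either would give $\chi = 0$, contradicting the invertibility of $\chi$ required by Definition~\ref{def:Corr}. For part~(ii), I would consider the elementary sewing $\mathsf s\colon \gpq 011 \sqcup \gpq 011 \To \gpq 011$ that glues the outgoing boundary of one cylinder to the incoming boundary of the other. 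Monoidality and naturality of $\mCorr$ yield $\mBlF(\mathsf s)(\mCorr(\gpq 011) \otik \mCorr(\gpq 011)) = \mCorr(\gpq 011)$. Under the canonical identification $\mBlF(\gpq 011) \Cong \EndD(F)$ via bending with $d_F$, the sewing map~\eqref{eq:sewmap1} realizes composition of endomorphisms, so this reads $\chi \cir \chi = \chi$, and invertibility of $\chi$ then forces $\chi = \id_F$. Unfolding $\chi$ via Lemma~\ref{lem:anysphere-from-3}(iv) turns $\chi = \id_F$ into a zigzag identity $(\id_F \oti \tilde d) \cir (\tilde b \oti \id_F) = \id_F$ with $\tilde b := (\id_F \oti \epF \oti \id_F) \cir \omF$ and $\tilde d := d_F \cir (\PhF \oti \id_F)$, while running the same computation for the mirror expression of $\mCorr(\gpq 011)$ in Lemma~\ref{lem:anysphere-from-3}(iv) supplies the companion zigzag $(\tilde d \oti \id_F) \cir (\id_F \oti \tilde b) = \id_F$. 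Together the two zigzags exhibit $(\tilde b,\tilde d)$ as a duality datum, so $\PhF$ is an isomorphism.

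For part~(iv), self-duality of $F$ is immediate from~(ii). For $\theta_F = \id_F$, the plan is to use naturality against the Dehn twist $D_\alpha$ around a boundary circle $\alpha$ of $\gpq 003$: $D_\alpha$ is an admissible mapping class whose image under $\mBlF$ acts on $\HomD(\one, F \oti F \oti F)$ as post-composition with $\theta_F$ on the leg associated to $\alpha$, a ribbon-theoretic fact that in our setup is extracted by expressing $D_\alpha$ as a composition of elementary B-, Z- and F-moves and simplifying via~\eqref{Z-theta}. Naturality of $\mCorr$ then forces $(\theta_F \oti \id_F \oti \id_F) \cir \omF = \omF$; contracting with $\epF$ on the middle leg yields $(\theta_F \oti \id_F) \cir \tilde b = \tilde b$, and combining with the zigzag $(\id_F \oti \tilde d) \cir (\tilde b \oti \id_F) = \id_F$ from~(ii) collapses this to $\theta_F = \id_F$.

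For part~(iii), the B-move is admissible and Definition~\ref{def:mBlF2} sends it to post-composition with $c_{F,F} \oti \id_F$, so naturality against $\mB\colon (\gpq 003,\mark) \To (\gpq 003,\mark')$ gives $(c_{F,F} \oti \id_F) \cir \omF = \omF$. The Z-move is likewise admissible, and by~\eqref{Z-theta} its action on $\omF$ equals $(\id_{F \otimes F} \oti \theta_F) \cir c_{F, F \oti F}$; with $\theta_F = \id_F$ from~(iv), this reduces to the pure cyclic braid $c_{F, F \oti F}$. Combining invariance under $c_{F,F} \oti \id_F$ with invariance under the cyclic braid generates invariance under the whole braid group $B_3$ acting on the three $F$-strands. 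The main obstacle will be the Dehn-twist bookkeeping needed in~(iv); everything else in the argument amounts to unpacking definitions.
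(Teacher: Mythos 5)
Your overall strategy -- playing the naturality square \eqref{eq:f-square} against the Z-, B- and F-moves and the non-degeneracy clause of Definition \ref{def:Corr} -- is exactly the paper's, and parts (i), (iii) and the twist argument in (iv) are essentially the printed proof. (For the twist the paper works directly on $\gpq 002$, observing via \eqref{Z-theta} that $\mZ_{F,F}$ and $\mB_{F,F,\one}$ differ by post-composition with $\id_F\oti\theta_F$, so joint invariance forces $\theta_F\eq\id_F$; your boundary Dehn twist on $\gpq 003$ is the same mechanism with extra move-bookkeeping, and is fine.)

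The one genuine gap is in (ii), in the claim that ``running the same computation for the mirror expression supplies the companion zigzag'' $(\tilde d\oti\id_F)\cir(\id_F\oti\tilde b)\eq\id_F$ \emph{with the same} $\tilde d\eq d_F\cir(\PhF\oti\id_F)$. The mirror correlator $(\PhF\oti\epF\oti\id_F)\cir\omF$ lives in $\HomD(\one,F^\vee\oti F)$, and the endomorphism of $F$ it induces (via the sewing prescription \eqref{eq:sewmap1}, which inserts $\jef\eq\pi_F\oti\idFv$ on an incoming leg) is built from the pairing $d_{F^\vee}\cir(\pi_F\oti\PhF)$, not from $\tilde d$. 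Identifying the two pairings is precisely the statement $\PhF^\vee\cir\pi_F\eq\PhF$, i.e.\ \eqref{eq_Phi.pi=Phi}, which in the paper is only derived \emph{after} Lemma \ref{lem:selfdual-etc}; so as written your second zigzag is circular. The paper avoids this by extracting from the cylinder only that $\mCorr(\gpq 011,\mark)\eq b_F$, hence that $\PhF^-\df(d_F\oti\epF\oti\id_F)\cir(\idFv\oti\omF)$ is a \emph{right} inverse of $\PhF$, and deferring the left inverse until $\theta_F\eq\id_F$ and the $B_3$-invariance of $\omF$ are available. Your conclusion is nevertheless safe and can be repaired either by following that order, or more cheaply by noting that $\PhF$ is then a split epimorphism between the objects $F$ and $F^\vee$, which have equal finite length in a finite tensor category, and hence is automatically an isomorphism; but some such repair is needed, since the two-zigzag argument does not close as stated.
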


\begin{proof}
On the cylinder $\gpq 011$ select a marking $\mark$ without cuts 
that has trivial winding with respect to the non-contractible cycle of the cylinder 
and for which the distinguished edge ends on the outgoing boundary circle. Abbreviate
$j_F \,{:=}\, (\id_F \oti d_F) \cir [ \mCorr(\gpq 011,\mark) \oti \id_F ] \iN \EndD(F)$.
The non-degeneracy condition demands that $j_F$ is invertible. On the other hand,
gluing two such cylinders and invoking invariance under the F-isomorphism shows that
$j_F$ is an idempotent; thus $j_F \eq \id_F$. This is equivalent to
$ \mCorr(\gpq 011,\mark) \eq b_F $, from which it follows in particular that $\omF$ and 
$\epF$ are non-zero, thus proving (i), and that the morphism
$\PhF^- \,{:=}\, (d_F \oti \epF \oti \id_F) \cir (\idFv \oti \omF) 
   $\linebreak[0]$
   {\in}\, \HomD(F^\vee,F)$
is a right-inverse of $\PhF$.
\\ 
Next we note, using naturality of the braiding and the relation between the pivotal
structure $\pi$ and the twist, that for any marking $\mark$ on $\gpq 002$ we have
  $$
  \mZ_{F,F}\big(\mCorr(\gpq 002,\mark)\big) 
  = (\id_F \oti \theta_F) \circ \big( \mB_{F,F,\one}\big(\mCorr(\gpq 002,\mark)\big) \big) \,.
  $$
Invariance of $\mCorr(\gpq 002,\mark)$ under both the Z- and the B-isomorphism thus implies
that $F$ has trivial twist, $\theta_F \eq \id_F$.
Using this result, it follows further that $\mZ_{F,F\otimes F}$ and $\mB_{F,F,F}$
generate an action of the braid group $B_3$ on the three tensor factors $F$ in the codomain
of $\omega$. Invariance of $\mCorr(\gpq 003,\mark)$ under the Z- and the B-isomorphisms thus 
proves the claim (iii).
Combining the results obtained so far one easily shows that the morphism $\PhF^-$ is 
also a left-inverse of $\PhF$, thus completing the proof of (ii).
Finally, (ii) implies that $F$ is self-dual.
\end{proof}

One can further check that
  $$
  (d_F \oti \epF \oti \id_F) \cir (\idFv \oti \omF) \equiv \PhF^{-1}
  = \big(\id_F \oti \epF \oti [ d_{F^\vee_{}}^{} \cir (\pi_F \oti \idFv) ] \,\big)
  \circ (\omF  \oti \idFv) \,,
  $$
which in turn implies
  \be
  \PhF^\vee \circ \pi_F^{} = \Phi_F \,.
  \labl{eq_Phi.pi=Phi}
This identity (which was in fact already used implicitly in the statement of
Lemma \ref{lem:anysphere-from-3}\,(v) and (vi) above) means that the object $F$ 
is not only self-dual, but also 
that its Frobenius-Schur indicator (see e.g.\ \cite[Sect.\,2.3]{fffs3}) is equal to 1.

To proceed we introduce the specific expressions
  \be
  \bearl
  \Delta_F := \big( \id_{F^{\otimes 2}_{}}^{} \oti [ d_F \cir (\PhF \oti \id_F) ]\,\big)
  \circ (\omF \oti \id_F) ~\in \HomD(F,F\oti F) \,,
  \Nxl3
  \eta_F := (\epF \oti \epF \oti \id_F) \circ \omF ~\in \HomD(\one,F)  \qquad{\rm and}
  \Nxl3
  m_F := \big( \id_F \oti [ d_{F^{\otimes 2}_{}}^{} \cir (\PhF \oti \PhF \oti
  \id_{F^{\otimes 2}_{}}^{} )]\,\big) \circ (\omF \oti \id_{F^{\otimes 2}_{}}^{})
  ~\in \HomD(F\oti F,F) 
  \eear
  \labl{eq:def-Delta-m-eta}
in the morphisms used so far. We have

\begin{prop}\label{prop:Frob}
Let $\mCorr$ be a consistent system of correlators with bulk state space $F$. Then 
the morphisms $(m_F,\eta_F,\Delta_F,\epF)$ endow the object $F$ with the structure of
a Frobenius algebra in \D.
\end{prop}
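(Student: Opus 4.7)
The plan is to verify each Frobenius axiom by unpacking the definitions of $m_F$, $\eta_F$, $\Delta_F$, $\epF$ from \eqref{eq:def-Delta-m-eta} in terms of the basic data $\omF$ and $\PhF$, and then to translate suitable instances of the naturality square \eqref{eq:f-square} --- applied to F-moves, A-moves, and sewings connecting low-genus spheres --- into the desired algebraic identities. Throughout, the crucial structural inputs are the cyclic $B_3$-invariance of $\omF$ (Lemma~\ref{lem:selfdual-etc}(iii)), the fact that $\PhF$ is an isomorphism with $\PhF^\vee \cir \pi_F \eq \PhF$ \eqref{eq_Phi.pi=Phi}, and the explicit identification $\mCorr(\gpq 011,\mark) \eq b_F$ established in the proof of Lemma~\ref{lem:selfdual-etc}.

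For the \emph{unit and counit axioms} I would apply the sewing $\gpq 020 \,{\sqcup}\, \gpq 010 \,{\to}\, \gpq 011$ together with Lemma~\ref{lem:anysphere-from-3}(iv) to express the cylinder correlator in two ways, and then use that $\mCorr(\gpq 011)$ equals $b_F$. Substituting the definitions of $\epF$, $\eta_F$, $\Delta_F$, $m_F$ into these equalities yields $(\epF \oti \id_F) \cir \Delta_F \eq \id_F$ and $m_F \cir (\eta_F \oti \id_F) \eq \id_F$; the mirror identities follow by combining the cyclic symmetry of $\omF$ with \eqref{eq_Phi.pi=Phi}. For \emph{associativity} of $m_F$ I would invoke naturality against the A-move on a $4$-holed sphere $\gpq 031$: the two markings on $\gpq 031$ related by $\mA$ correspond, via Definition~\ref{def:mBlF3}, to the two sewings of three-holed spheres that realize $m_F \cir (m_F \oti \id_F)$ and $m_F \cir (\id_F \oti m_F)$, respectively. \emph{Coassociativity} of $\Delta_F$ is proved by the dual argument applied to $\gpq 013$.

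For the \emph{Frobenius relation} $(m_F \oti \id_F) \cir (\id_F \oti \Delta_F) \eq \Delta_F \cir m_F \eq (\id_F \oti m_F) \cir (\Delta_F \oti \id_F)$, I would consider the $4$-holed sphere $\gpq 022$ carrying three distinct fine markings, each with a single cut, whose corresponding sewings give the three expressions above. Any two of these markings are related by an A-move, so naturality of $\mCorr$ under A-moves on $\gpq 022$ identifies the three sides. The computation again reduces, after substituting the definitions and using Lemma~\ref{lem:anysphere-from-3}(iv)--(vii), to an equality of morphisms built from two copies of $\omF$ that are contracted through a single $\PhF$; this is exactly the Frobenius identity once the coend bookkeeping is carried out via the dinatural morphisms from \eqref{def-F-isoUV} and \eqref{def-A-iso}.

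The principal technical obstacle is the \emph{coend bookkeeping}. Each A-move invariance statement is, a priori, an equality of vectors in morphism spaces realized as coends $\int^{X\in\D} \HomD(\one, \cdots \oti X) \otik \HomD(\one, \cdots \oti X^\vee)$, and extracting from it the intended identity in $\HomD(F^{\oti k}, F^{\oti l})$ requires composing with the dinatural projections, tracking the action of the Z-, B-, F-, and A-isomorphisms of Definition~\ref{def:Z...S-iso}, and repeatedly using \eqref{eq_Phi.pi=Phi}, self-duality of $F$, and the cyclic symmetry of $\omF$. Once this framework is in place, each axiom is obtained by a direct, if laborious, diagram chase.
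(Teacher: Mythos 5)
Your proposal is correct in substance, but it routes two of the three axiom groups through different surfaces than the paper does, and the comparison is instructive. For the coalgebra part you and the paper essentially agree: the counit axiom is exactly the statement $j_F \eq \id_F$ from Lemma \ref{lem:selfdual-etc} combined with Lemma \ref{lem:anysphere-from-3}(iv) (note, though, that the sewing you write, $\gpq 020 \,{\sqcup}\, \gpq 010 \To \gpq 011$, does not exist --- neither factor has an outgoing boundary circle, and the hole count is wrong; the correct input is simply $\mCorr(\gpq 011,\mark) \eq b_F$ together with Lemma \ref{lem:anysphere-from-3}(iv)). Where you genuinely diverge is in the remaining axioms. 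The paper extracts a single topological identity at the four-holed level, namely A-move invariance of $\mCorr(\gpq 004,\mark)$ with all boundary circles outgoing, reads it off as coassociativity of $\Delta_F$, and then obtains the entire algebra structure by pure dualization through $m_F^\vee \eq (\PhF \oti \PhF) \cir \Delta_F \cir \PhF^{-1}$ and $\eta_F^\vee \eq \epF \cir \PhF^{-1}$; you instead run separate A-move arguments on $\gpq 031$ and $\gpq 013$, which works but duplicates effort. More significantly, the Frobenius compatibility is \emph{not} obtained in the paper from marking changes on $\gpq 022$: the definitions \eqref{eq:def-Delta-m-eta} already imply $m_F \eq (\id_F \oti d_F) \cir (\id_F \oti \PhF \oti \id_F) \cir (\Delta_F \oti \id_F)$, whence $\epF \cir m_F \eq d_F \cir (\PhF \oti \id_F)$ is an invariant bilinear form that is non-degenerate precisely because $\PhF$ is invertible and whose dual coproduct is $\Delta_F$ --- i.e.\ a Frobenius form, with no further coend bookkeeping. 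Your crossing-symmetry derivation on $\gpq 022$ is also valid (with the caveat that relating all three channels requires a B-move in addition to A-moves, which is harmless since $\omF$ is already braiding-invariant by Lemma \ref{lem:selfdual-etc}(iii)), and it has the virtue of making the geometric origin of the Frobenius relation transparent; the paper's algebraic shortcut buys a much shorter verification and sidesteps exactly the dinaturality bookkeeping you identify as the main obstacle.
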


\begin{proof}
Invariance of $\mCorr(\gpq 004,\mark)$, for a suitable choice of marking $\mark$, under 
the A-isomorphism amounts to the equality
  \be
  \bearl
  \big( \id_{F^{\otimes 2}_{}}^{} \oti [ d_F \cir (\PhF \oti \id_F) ]
  \oti \id_{F^{\otimes 2}_{}}^{} \big) \circ (\omF \oti \omF)
  \Nxl3 \hsp5
  = \big( \id_{F^{\otimes 3}_{}}^{} \oti [ d_F \cir (\PhF \oti \id_F) ] \oti \id_F \big)
  \circ (\id_F \oti \omF \oti \id_{F^{\otimes 2}_{}}^{}) \circ \omF \,.
  \eear
  \labl{eq:A-inv}
When expressed in terms of $\Delta_F$, this is nothing but coassociativity. 
The counit properties of $\epF$ are equivalent to the result obtained in the proof of
Lemma \ref{lem:selfdual-etc} that the morphism $j_F$ defined there equals $\id_F$.
This shows that $(F,\Delta_F,\epF)$ is a coalgebra.
   % \\[2pt]
That $(F,m_F,\eta_F)$ is an algebra follows by dualizing these considerations after
noticing that
  \be
  m^\vee_F = (\PhF^{} \oti \PhF^{}) \circ \Delta_F^{} \circ \PhF^{-1} \qquand
  \eta_F^\vee = \epF^{} \circ \PhF^{-1} .
  \labl{eq:mvee}
Finally, the Frobenius property is seen as follows. From the definition
\eqref{eq:def-Delta-m-eta} of the product and coproduct it follows in particular that
we also have
  $$
  m_F = (\id_F \oti d_F) \circ (\id_F \oti \PhF \oti \id_F) \circ (\Delta_F \oti \id_F) \,,
  $$
which together with the counit property of $\epF$ implies that
$\PhF \eq ( (\epF\cir m_F) \oti \idFv) \cir (\id_F \oti b_F)$. That $\PhF$ is invertible
is thus equivalent to the statement that the bilinear form
$\kappa_F \,{:=}\, \epF \cir m_F \,{=}  %\,
           $\linebreak[0]$
d_F \,{\circ}\, (\PhF \oti \id_F)$ in $\HomD(F\oti F,\one)$ is non-degenerate. Also,
owing to associativity of $m_F$, the form $\kappa_F$ is invariant, and thus it constitutes a 
Frobenius form for $F$.
\end{proof}

\begin{rem}\label{rem:graph}
The manipulations in the proofs of Lemma \ref{lem:selfdual-etc} and Proposition \ref{prop:Frob} 
can conveniently be performed with the help of a graphical calculus for morphisms in monoidal
categories. We content ourselves to illustrate this with a few representative calculations.
First, depicting the basic morphisms \eqref{om,eps,Phi} as
  \eqpic{11} {320} {36} {
  \put(1,19)    {$ \omF ~=: $}
  \put(46,0)    {\includepic{11a} }
  \put(140,19)  {$ \epF^{} ~=: $}
  \put(182,9)   {\includepic{11b} }
  \put(250,19)  {$ \PhF ~=: $}
  \put(296,0)   {\includepic{11c} 
  } }
the equality \eqref{eq:A-inv} that implements the compatibility of the correlators 
$\mCorr(\gpq 004,\mark)$ with the A-iso\-mor\-phism reads
  \eqpic{38} {190} {79} {
  \put(0,0)     {\includepic{38a} }
  \put(99,38)   {$ = $}
  \put(134,0)   {\includepic{38b}
  } }
Noticing that
  \eqpic{39} {110} {65} {
  \put(0,29)    {$ \Delta_F ~= $}
  \put(50,0)    {\includepic{39}
  } }
this equality expresses the coassociativity of $\Delta_F$. Second, the relation 
between product and coproduct given in \eqref{eq:mvee} is obtained by the sequence
  \eqpic{41} {268} {104} {
  \put(-96,47) {$ m_F ~:= $}
  \put(-46,13)  {\includepic{40} }
  \put(24,47)   {$ = $}
  \put(50,0)    {\includepic{41a} }
  \put(162,47)  {$ = $}
  \put(188,0)   {\includepic{41b} }
  \put(279,47)  {$ = $}
  \put(306,0)   {\includepic{41c}
  \put(5,33.3)  {\sse$ \PhF^{-\!1} $}
  \put(15,47.7) {\sse$ \Delta_F $}
  } }
of equalities. And third, the inverse of $\PhF$ as introduced in the proof of
Lemma \ref{lem:selfdual-etc} satisfies 
  \eqpic{16} {210} {63} {
  \put(0,29)    {$ \PhF^{-1} ~:= $}
  \put(54,0)    {\includepic{16}   }
  \put(120,29)  {$ = $}
  \put(151,0)   {\includepic{31b} 
  \put(30.8,35.7) {\sse$ \pi_{\!F} $}
  } }
which is e.g.\ used in deriving the relation \eqref{eq_Phi.pi=Phi}
between $\PhF^{}$ and $\PhF^\vee$.
\end{rem}    

Further properties of $F$ now follow easily:

\begin{prop}
Let $\mCorr$ be a consistent system of correlators with bulk state space $F$. Then the
Frobenius algebra $(F,m_F,\eta_F,\Delta_F,\epF)$ is {\rm(}co{\rm)}commutative and symmetric.
\end{prop}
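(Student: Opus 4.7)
My plan is to deduce (co)commutativity and symmetry as separate consequences of the structural properties of the basic morphism $\omega_F \iN \HomD(\one, F^{\otimes 3})$ established above. The two key inputs are the braid-group invariance of $\omega_F$ from Lemma \ref{lem:selfdual-etc}(iii) and the identity $\Phi_F^\vee \cir \pi_F = \Phi_F$ of \eqref{eq_Phi.pi=Phi}. For cocommutativity of $\Delta_F$, the definition \eqref{eq:def-Delta-m-eta} shows that the two output strands of $\Delta_F$ are the first two of the three legs of $\omega_F$, while the third leg is paired against the input via $\Phi_F$ and $d_F$. Applying $(c_{F,F} \oti \id_F) \cir \omega_F = \omega_F$ then immediately yields $c_{F,F} \cir \Delta_F = \Delta_F$. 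Commutativity of $m_F$ follows either by the analogous argument applied to $(\id_F \oti c_{F,F}) \cir \omega_F = \omega_F$ and the defining formula for $m_F$, or equivalently from cocommutativity of $\Delta_F$ via the duality identity $m_F^\vee = (\Phi_F \oti \Phi_F) \cir \Delta_F \cir \Phi_F^{-1}$ of \eqref{eq:mvee} together with naturality of the braiding.

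For symmetry, my plan is to use the invariance of $\omega_F \eq \mCorr(\gpq 003)$ under the Z-isomorphism. Because the formula \eqref{def-Z-iso} for $\mZ_{F,F\oti F}$ involves a factor of $\pi_F^{-1}$, Z-invariance of $\omega_F$ encodes a cyclic symmetry of its three legs that already carries the pivotal twist. Inserting this cyclic relation into the definition \eqref{eq:def-Delta-m-eta} of $m_F$ and using \eqref{eq_Phi.pi=Phi} to rewrite the resulting contractions with $\Phi_F$ should give the identity expressing that the two morphisms $F \To F^\vee$ induced by the Frobenius form $\kappa_F \eq \epF \cir m_F$ from the left and from the right (differing by a factor of $\pi_F$, in the standard pivotal sense) actually coincide; this is precisely the symmetry condition for $(F,m_F,\eta_F,\Delta_F,\epF)$.

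The main obstacle is bookkeeping: each use of the Z-isomorphism introduces a factor of $\pi_F$ or $\pi_F^{-1}$, and one must invoke \eqref{eq_Phi.pi=Phi} at exactly the right place to convert the cyclic relation on $\omega_F$ into the pivotal adjustment required by symmetry -- as opposed to mere invariance of the bilinear form $\kappa_F$ under swap of arguments, which is already implied by the commutativity of $m_F$. The graphical calculus sketched in Remark \ref{rem:graph} makes this verification transparent, but the chain of moves and pivotal insertions must be traced carefully. By contrast, the (co)commutativity half of the proposition is entirely routine given the braid invariance of $\omega_F$.
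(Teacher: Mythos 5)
Your proposal is correct, and its first half coincides with the paper's argument: the paper likewise obtains cocommutativity of $\Delta_F$ as an immediate consequence of the invariance of $\omF$ under the B-isomorphism, and gets commutativity of $m_F$ either from the general fact that commutativity and cocommutativity are equivalent for a Frobenius algebra or by dualizing via \erf{eq:mvee}, exactly as you suggest. Where you genuinely diverge is in the symmetry part. The paper's proof is a one-liner: commutativity together with the triviality of the twist $\theta_F \eq \id_F$ (already established in Lemma \ref{lem:selfdual-etc}(iv)) implies that the Frobenius form $\epF \cir m_F$ is symmetric in the ribbon sense -- a standard lemma about Frobenius algebras in ribbon categories, whose converse direction the paper records in a parenthetical remark. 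You instead extract symmetry directly from the cyclic (Z-)invariance of the three-point correlator, feeding the resulting pivotal insertions into the identity $\PhF^\vee \cir \pi_F^{} \eq \PhF$ of \erf{eq_Phi.pi=Phi}; since the proof of Proposition \ref{prop:Frob} identifies $\PhF$ with the morphism $((\epF\cir m_F) \oti \id_{F^\vee_{}}) \cir (\id_F \oti b_F)$ induced by the Frobenius form, \erf{eq_Phi.pi=Phi} \emph{is} the symmetry condition, so your route closes. What each approach buys: the paper's is shorter and cleanly separates the geometric input (B- and Z-invariance feed into Lemma \ref{lem:selfdual-etc}) from a purely algebraic deduction, while yours makes explicit which piece of mapping-class-group invariance (the cyclic Z-move on $\gpq 003$, via \erf{def-Z-iso} and \erf{Z-theta}) is responsible for symmetry and where the pivotal structure enters -- and you correctly flag that this is strictly more than invariance of $\kappa_F$ under the braiding, which commutativity alone already gives. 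The only caution is the bookkeeping you yourself identify: each Z-move contributes a factor of $\pi_F^{\pm1}$ or equivalently $\theta_F^{\pm1}$, and one must use $\theta_F \eq \id_F$ and \erf{eq_Phi.pi=Phi} consistently; with the graphical calculus of Remark \ref{rem:graph} this is routine.
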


\begin{proof}
Cocommutativity of $\Delta_F$ is an immediate consequence of the invariance of $\omF$
under the B-isomorphism. For a Frobenius algebra, commutativity is equivalent to
cocommutativity. (In the case at hand, commutativity of $m_F$ also follows directly by 
dualizing after invoking \erf{eq:mvee}.) Commutativity together with triviality of the twist
imply that the Frobenius form $\epF \cir m_F$ is symmetric in the sense of ribbon categories. 
  % see Definition 3.4(ii) in \cite{fuRs4}
(Note that the property of being (ribbon) symmetric does not include commutativity; 
conversely, $\theta_F \eq \id_F$ follows by combining commutativity and symmetry.)
\end{proof}

It is not hard to check that demanding that $\mCorr$ is a consistent system of correlators
on marked surfaces of \emph{genus zero} does not lead to any further constraints
than those which we have treated already. Moreover, by reading some of 
the arguments backwards it is evident that the restrictions on the object $F$ obtained 
above are indeed also sufficient. Recalling in addition that $\mCorr$ gives us a monoidal 
natural transformation $\Corr\colon \One \,{\Rightarrow}\, \BlF$ via \erf{eq:Kan4Corr}, 
we can state the first part of our main result, which describes the bulk fields of 
a conformal field theory that is defined on surfaces of genus zero:

\begin{prop}\label{prop-0}
For \D\ a finite ribbon category, the consistent systems of bulk field correlators on
surfaces of genus zero with {\rm(}genus-zero{\rm)} monodromy data based on \D\ and with
bulk object $F \iN \D$ 
are in bijection with structures of a commutative symmetric Frobenius algebra on $F$.
\end{prop}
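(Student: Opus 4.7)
The plan is to upgrade the partial results already established in Section~\ref{sec4} into the claimed bijection. For the forward implication, suppose $\mCorr\colon \mOne \Rightarrow \mBlF$ is a consistent system of correlators on genus-zero marked surfaces. Extract the triple $(\omF,\epF,\PhF)$ defined in \erf{om,eps,Phi} from the values of $\mCorr$ on $\gpq 003$, $\gpq 010$ and $\gpq 020$; by Lemma~\ref{lem:selfdual-etc}, $\PhF$ is an isomorphism and $F$ is self-dual with trivial twist; by Proposition~\ref{prop:Frob} and the proposition immediately following it, the quadruple $(m_F,\eta_F,\Delta_F,\epF)$ defined in \erf{eq:def-Delta-m-eta} is a commutative symmetric Frobenius algebra in $\D$. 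This defines the map from consistent systems of correlators to commutative symmetric Frobenius algebra structures on $F$, and in view of the Kan-extension reformulation \erf{eq:Kan4Corr} it yields the forward arrow of the claimed bijection.

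For the converse, start with a commutative symmetric Frobenius algebra $(F,m_F,\eta_F,\Delta_F,\epF)$ in $\D$. Define $\PhF \iN \HomD(F,F^\vee)$ as the duality isomorphism associated with the non-degenerate Frobenius pairing $\epF \cir m_F$, and reconstruct $\omF \iN \HomD(\one,F^{\otimes 3})$ as the unique morphism such that the formulas \erf{eq:def-Delta-m-eta} recover the given $\Delta_F$ and $m_F$; explicitly, $\omF$ is obtained from $\Delta_F \cir \eta_F \iN \HomD(\one,F\oti F)$ by inserting $\PhF^{-1}$ and exploiting the self-duality of $F$. Using Lemma~\ref{lem:anysphere-from-3} as a \emph{definition}, declare $\mCorr$ on every sphere with at most three holes and a marking without cuts in terms of these data, and extend to an arbitrary connected marked surface $\surfm$ of genus zero by iterated sewing via Definition~\ref{def:mBlF3}, starting from the three-holed pieces dictated by a chosen fine cut system. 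Well-definedness requires that sewing in any order produces the same vector, which is supplied by the Fubini theorem \erf{eq:Fubini}, and that the outcome is independent of the choice of fine cut system; via the elementary F- and A-moves this reduces, respectively, to the counit property of $\epF$ and to coassociativity of $\Delta_F$.

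It then remains to check invariance of the vectors $\mCorr\surfm$ under the admissible elementary moves of the groupoids $\CWm(\surf)$ for $\surf$ a genus-zero surface. By Remark~\ref{rem:g=0} only the Z-, B-, F- and A-moves enter, and the latter two are already implemented in the sewing prescription. Invariance under the Z-isomorphism \erf{def-Z-iso} follows from the symmetry of the Frobenius form together with $\theta_F\eq\id_F$, which in turn is a consequence of the combined commutativity and ribbon symmetry of $F$; invariance under the B-isomorphism \erf{def-B-iso} is exactly cocommutativity of $\Delta_F$. The non-degeneracy condition of Definition~\ref{def:Corr} holds because $\PhF$, and hence the associated cylinder endomorphism, is invertible by the Frobenius property. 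Finally, by construction the two assignments are mutually inverse on the generating data $(\omF,\epF,\PhF)$, and hence on all of $\mCorr$ by Proposition~\ref{prop:003-010-020}.

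The principal technical obstacle I anticipate is the verification of well-definedness of $\mCorr\surfm$ under arbitrary changes of fine cut system and of the ambient marking on an $n$-holed sphere for $n\,{\ge}\,4$, where a direct case analysis would explode combinatorially. The cleanest way around this is to fix, for each topological type, a reference marking pulled back from the standard sphere, to express every competing marking as a sequence of F- and A-moves relative to this reference, and to invoke the standard Mac~Lane coherence for associative unital algebras (equivalently, coassociative counital coalgebras) to conclude that all such sequences yield the same composite of Frobenius structure maps. Once this bookkeeping is in place, every instance of the naturality square \erf{eq:f-square} reduces term-by-term to one of the Frobenius algebra axioms already extracted in the forward direction.
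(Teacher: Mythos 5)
Your proposal is correct and follows essentially the same route as the paper: the forward direction is exactly the chain Lemma~\ref{lem:selfdual-etc} -- Proposition~\ref{prop:Frob} -- the (co)commutativity/symmetry proposition, and your converse is the explicit version of the paper's remark that these arguments can be read backwards, with Proposition~\ref{prop:003-010-020} guaranteeing that the two assignments are mutually inverse. Two minor caveats: the reconstruction of $\omF$ is most cleanly written as $(\Delta_F\oti\id_F)\cir\Delta_F\cir\eta_F$ rather than by ``inserting $\PhF^{-1}$ into $\Delta_F\cir\eta_F$'' (which has the wrong number of output legs, although your implicit characterization via \erf{eq:def-Delta-m-eta} is the right one), and the coherence issue you flag in the last paragraph is already absorbed by the generators-and-relations presentation of the groupoids $\CWm(\surf)$ together with Lemma~\ref{lem:W1-11-13} -- naturality need only be checked on the elementary moves and sewings -- so no separate appeal to Mac~Lane coherence is required.
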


%%%%%%%%%%%%%%%%%%%%%%%%%%%%%%%%%%%%%%%%%%%%%%%%%%%%%%%%%%%%%%%%%%%%%%%%%%%%%%%%%%

\subsection{Higher genus correlators}

To extend our findings to surfaces of any genus we need to analyze invariance under
the S-iso\-morphism. Recall that for $U \iN \D$ the S-isomorphism $\mS_U$ is the linear 
endomorphism of $\HomD(U,K)$ given by post-composition with the isomorphism 
$S^K \iN \EndD(K)$ defined in \erf{SK}.
We are thus dealing with correlators for genus-1 surfaces. Indeed invariance
under the S-iso\-morphism boils down to
invariance of the correlator of a one-holed torus $\gpq 110$
with some choice of marking $\mark$, 
or rather, to fit with the conventions chosen for the S-isomorphism, of the combination
  $$
  \widetilde{\mathrm v}^1_{1|0}
  := (\id_K \oti d_F) \circ (\mCorr(\gpq 110,\mark) \oti \id_F) ~\iN \HomD(F,K) \,.
  $$
In short, a system of correlators that is consistent at genus zero can be consistently
extended to higher genus if and only the equality 
  \be
  S^K \circ \widetilde{\mathrm v}^1_{1|0} = \widetilde{\mathrm v}^1_{1|0}
  \labl{eq:SK.v110=v110}
holds.

Via the prescription \erf{eq:mCorr-cut} the vector $\widetilde{\mathrm v}^1_{1|0}$ is
expressed through the correlator of a three-holed sphere obtained
as a cut surface, $\gpq 021 \eq \oline{\gpq 110}{\{c\}}$.
Further, without loss of generality we may assume that the fine cut system on the torus 
$\gpq 110$ is minimal and thus consists of a single cut, and take the graph $\mark$ on
the torus in such a way that the resulting graph $\mark_{\!c}$ on the cut surface is the
one for which $\mCorr(\gpq 021,\mark_{\!c}) \eq (\id_F \oti \PhF \oti \PhF) \cir \omF$.
Doing so we arrive at
 % $\mCorr(\gpq 110,\mark) = (\iK_F \oti \id_F) \cir (\id_F \oti \PhF \oti \PhF) \cir \omF$
  $$
  \widetilde{\mathrm v}^1_{1|0}
  = (\iK_F \oti d_F) \circ (\id_F \oti \PhF \oti \PhF \oti \id_F) \circ (\omF \oti \id_F)
  $$
with $\iK$ the dinatural transformation of the coend $K$.
Furthermore, upon invoking the universal property of the functor $\BlF$ as a right 
Kan extension as presented in the diagram \eqref{eq:Kan4Corr}, we may identify
$\widetilde{\mathrm v}^1_{1|0}$ with ${\mathrm v}^1_{1|0}$. By comparison with the 
definition of the coproduct $\Delta_F$ in \erf{eq:def-Delta-m-eta} we can then write
  \be  
  {\mathrm v}^1_{1|0} = \iK_F \circ (\id_F \oti \PhF) \circ \Delta_F \,.
  \labl{eq:mCorr110}

\begin{rem}    
In terms of the graphical calculus mentioned in Remark \ref{rem:graph},
the relation \eqref{eq:mCorr110} arises as follows. We have
  \eqpic{42} {360} {99} {
  \put(-2,42)   {$ \mCorr(\gpq 110,\mark) ~= $}
  \put(86,0)    {\includepic{42}
  \put(.2,72.6) {\sse$ \iK_F $}
  \put(11.7,100){\sse$ K $} }
  \put(172,42)  {$ = $}
  \put(202,0)    {\includepic{43a} }
  \put(290,42)  {$ = $}
  \put(320,0)   {\includepic{43b} } }
where the first equality holds by construction and the second uses
that $\mCorr(\gpq 011,\mark') \eq b_F$. Thus
  \eqpic{44} {200} {100} {
  \put(0,44)    {$ \widetilde{\mathrm v}^1_{1|0} ~= $}
  \put(56,0)    {\includepic{44b} }
  \put(127,44)  {$ = $}
  \put(158,0)   {\includepic{44c}
  \put(5,24.2)  {\sse$ \Delta_F $} } }
\end{rem}    

It follows that invariance under the S-isomorphism can be expressed
as a compatibility property of the coproduct of $F$ with the 
structural morphism $\iK_F$ of the coend $K$. We accommodate this observation by

\begin{defi}\label{def:modFrob}
A commutative symmetric Frobenius algebra $(X,m,\eta,\Delta,\eps)$ in
a modular finite ribbon category \D\ is called \emph{modular} iff it satisfies 
  $$
  S^K \circ [ \iK_X \cir (\id_X \oti \varPhi) \cir \Delta ]
  = \iK_X \cir (\id_X \oti \varPhi) \cir \Delta \,,
  $$
with $\varPhi \eq ((\eps \cir m) \oti \idXv) \cir (\id_X \oti b_X)$.
\end{defi}

We have thus arrived at the second part of our main result:

\begin{thm}\label{thm-g}
For \D\ a modular finite ribbon category, the consistent systems of bulk field correlators 
with monodromy data based on \D\ and with bulk object $F \iN \D$ 
are in bijection with structures of a modular Frobenius algebra on $F$.
\end{thm}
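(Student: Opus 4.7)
\medskip\noindent
\textbf{Proof plan.}\, My strategy is to exploit the Kan extension in diagram \eqref{eq:Kan4Corr} and work throughout with monoidal natural transformations $\mCorr\colon\mOne\Rightarrow\mBlF$ on the richer category \mSurfC\ of marked surfaces. By Proposition \ref{prop:003-010-020} together with Lemma \ref{lem:anysphere-from-3}, any such $\mCorr$ is completely determined by its values on the three building-block spheres $\gpq 003$, $\gpq 010$, $\gpq 020$, i.e.\ by the triple $(\omF,\epF,\PhF)$ of elementary correlators \eqref{om,eps,Phi}. Proposition \ref{prop-0} has already identified the content of the genus-zero part of the consistency problem with the data of a commutative symmetric Frobenius algebra on $F$. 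The remaining task is therefore to isolate the extra constraints coming from the higher-genus moves, to check that they amount to precisely the modularity axiom of Definition \ref{def:modFrob}, and to verify that no further constraint is hidden in the genus-$\geq 2$ relations of Section \ref{sec:fine}.

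For the forward direction, I would assume $\mCorr$ is consistent and extract the commutative symmetric Frobenius structure from Proposition \ref{prop-0}. The correlator on a one-holed torus is produced by a single elementary sewing from $\gpq 021$, and the computation leading to \eqref{eq:mCorr110} yields $\mathrm v^1_{1|0}=\iK_F\cir(\id_F\oti\PhF)\cir\Delta_F$. Among the admissible moves of $\CWm(\gpq 110)$ the S-move acts on $\HomD(F,K)$ by post-composition with $S^K$, so its naturality square reads $S^K\cir\mathrm v^1_{1|0}=\mathrm v^1_{1|0}$, which is tautologically the modularity condition of Definition \ref{def:modFrob}. All remaining genus-$\geq 1$ relations are either automatic or absorbed: the relations (W11) and (W13) hold as identities of linear isomorphisms already on $\mBlF$ by Lemma \ref{lem:W1-11-13}, so they constrain $\mCorr$ no further, and the modular-group relation $(\mathrm{W}12)\cc$ is implemented by construction of \mSurfC\ through the central generator $\mC$; the consistency of this with $\mathrm v^1_{1|0}$ uses that $\theta_F\eq\id_F$ (Lemma \ref{lem:selfdual-etc}(iv)) which, via $T^K\cir\iK_F\eq\iK_F$, makes $\mathrm v^1_{1|0}$ automatically T-invariant and hence compatible with \eqref{eq:zeta} and \eqref{eq:mBlF-C}.

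For the converse direction, starting from a modular commutative symmetric Frobenius algebra $(F,m_F,\eta_F,\Delta_F,\epF)$ I would read off $(\omF,\epF,\PhF)$ by inverting \eqref{eq:def-Delta-m-eta}, apply Proposition \ref{prop-0} to obtain a consistent genus-zero system, and then extend to arbitrary genus by the sewing recipe \eqref{eq:mCorr-cut}: pick a fine cut system decomposing $\surf$ into at most three-holed spheres, assign the elementary correlators there, and close each cut with the dinatural structure morphism of Definition \ref{def:mBlF3}. Independence of the result from the order of sewings, and from the choice of cut system, follows from the Fubini theorems \eqref{eq:Fubini} and \eqref{eq:Fubini2} for ordinary and left-exact coends. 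What has to be checked is that the so-defined $\mCorr$ respects every generating move of $\CWm(\surf)$; by Proposition \ref{prop-0} this reduces at genus zero to the Frobenius axioms, at genus one to the identity $S^K\cir\mathrm v^1_{1|0}\eq\mathrm v^1_{1|0}$ (which is exactly the modularity axiom), and at any higher genus to the (by Lemma \ref{lem:W1-11-13} automatic) functor-level identities.

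The main obstacle I anticipate is the higher-genus relation (W13) on a two-holed torus, which simultaneously involves S-, B-, A- and Dehn moves. Although Lemma \ref{lem:W1-11-13} asserts the relation on the level of $\mBlF$, at the level of the correlator vector one must identify the value of $\mCorr$ obtained from two different sewing sequences corresponding to the two sides of (W13); this amounts to invoking the Fubini isomorphism $\oint^{Y\in\D}\!\int^{X\in\D}G \,{\cong}\, \oint^{X\in\D}\!\int^{Y\in\D}G$ of \eqref{eq:Fubini2} and tracing that it carries one dinatural composition to the other. A secondary subtlety is compatibility with the framing anomaly $\zeta$: combining S- and T-invariance of $\mathrm v^1_{1|0}$ with the relation $(S^K\cir T^K)^3\eq\zeta\,(S^K)^2$ forces $\zeta\eq 1$ on the genus-one component whenever $\mathrm v^1_{1|0}\nE 0$, and the non-degeneracy clause of Definition \ref{def:Corr} together with the modularity axiom has to be shown to guarantee this non-vanishing.
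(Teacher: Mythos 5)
Your proposal is correct and follows essentially the same route as the paper: reduce via the Kan extension to $\mCorr$ on marked surfaces, invoke Proposition \ref{prop-0} for the genus-zero content, and identify the single new higher-genus constraint as $S^K\cir\mathrm v^1_{1|0}\eq\mathrm v^1_{1|0}$ with $\mathrm v^1_{1|0}$ computed as in \eqref{eq:mCorr110}, which is verbatim the modularity axiom of Definition \ref{def:modFrob}, all remaining relations being absorbed by Lemma \ref{lem:W1-11-13} and the Fubini theorems. Your closing remarks on (W13) and on the framing anomaly (that $S$- and $T$-invariance force $\zeta\eq 1$ when $\mathrm v^1_{1|0}\nE 0$) address subtleties the paper passes over in silence, and are consistent with its treatment.
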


It is worth noting that by combining the compatibility relation \erf{eq:mCorr-cut} 
with the explicit form of $\mBlF(\mathsf s)$ for the relevant sewings $\mathsf s$ 
(see Definition \ref{def:mBlF3}), every morphism
$\mCorr\surfm$ can be expressed in closed form in terms of the three morphisms 
\erf{om,eps,Phi}, the duality and pivotality morphisms for $F$ and the morphism $\iK_F$. 
This expression takes a particularly suggestive form when using the duality to rewrite 
(after again invoking the universal property of the right Kan extension to obtain expressions
involving $\Corr$ rather than $\mCorr$) the correlator for $\gpq gpq$ as a morphism 
$\mathrm v^g_{p|q} \iN \HomD(F^{\otimes p},F^{\otimes q} \oti K^{\otimes g})$ 
and using the following abbreviations: Write
  $$
  \tau_F^{} := (m_F \oti \iK_F) \circ \big(\id_F \oti
  [ (\Phinv \oti \pi_F^{-1}) \cir b_{F^\vee_{}}^{} ] \oti \PhF \big) \circ \Delta_F
  ~\in \Hom(F,F\oti K) \,:
  $$
set $m_F^{(0)} \,{:=}\, \eta_F^{}$, $m_F^{(1)} \,{:=}\, \id_F^{}$, $m_F^{(2)} \,{:=}\, m_F^{}$
as well as $\Delta_F^{(0)} \,{:=}\, \epF^{}$, $\Delta_F^{(1)} \,{:=}\, \id_F^{}$,
$\Delta_F^{(2)} \,{:=}\, \Delta_F^{}$,
and similarly $\tau_F^{(0)} \,{:=}\, \id_F^{}$ and $\tau_F^{(1)} \,{:=}\, \tau_F^{}$;
then define recursively
  $$
  \bearl
  m_F^{(n)} := m_F^{} \circ (m_F^{(n-1)} \oti \id_F) \,, \qquad
  \Delta_F^{(n)} := (\Delta_F^{(n-1)} \oti \id_F) \circ \Delta_F^{} \qquand
  \Nxl3
  \tau_F^{(n-1)} := (\tau_F^{(n-2)} \oti \id_K) \circ \tau_F^{} 
  \eear
  $$
for $n \,{\ge}\, 3$. Then we have

\begin{prop}
Let $\Corr$ be a consistent system of bulk field correlators with bulk object $F$.
Then the correlator for a genus-$g$
surface with $p$ incoming and $q$ outgoing boundary circles is given by
  \be
  \mathrm v^g_{p|q} = (\Delta_F^{(q)} \oti
  \id_{K^{\otimes g}_{\phantom|}}^{}) \circ \tau_F^{(g)} \circ m_F^{(p)} .
  \labl{eq:Delta.tau.m}
\end{prop}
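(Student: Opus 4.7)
The plan is to derive \erf{eq:Delta.tau.m} by iterated sewing, combining the naturality
relation \erf{eq:mCorr-cut} with the explicit sewing maps of Definition \ref{def:mBlF3},
the explicit values of the genus-zero three-hole correlators from Lemma
\ref{lem:anysphere-from-3}, and the one-holed torus correlator \erf{eq:mCorr110}.
Via the universal property \erf{eq:Kan4Corr} it is enough to compute $\mCorr(\gpq gpq,C,\mark)$
for a particular convenient fine marking $(C,\mark)$, and by monoidality of $\Corr$ the
disconnected case reduces to the connected one.

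First, I would select a fine cut system $C$ on $\gpq gpq$ that decomposes the surface as a
linear chain: $p\mi1$ three-holed spheres that combine the $p$ incoming boundary circles
into a single intermediate $F$-strand, then $g$ one-holed tori glued onto that strand to
produce the $g$ handles, and finally $q\mi1$ three-holed spheres that split the strand into
the $q$ outgoing boundary circles. Repeated application of \erf{eq:mCorr-cut} then expresses
$\mCorr(\gpq gpq,C,\mark)$ as an explicit composition of the sewing maps \erf{eq:sewmap1}
and \erf{eq:sewmap2} applied to the correlators on the individual components. The Fubini
theorems \erf{eq:Fubini} and \erf{eq:Fubini2} guarantee that the resulting morphism does not
depend on the order in which the cuts are removed.

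Next I would carry out three inductions. On the input side, the base case $p\eq0$ uses
Lemma \ref{lem:anysphere-from-3}\,(ii), which gives $\eta_F \eq m_F^{(0)}$; each inductive
step sews an extra three-holed sphere onto the input strand and, via the sewing formula
\erf{eq:sewmap1} together with the definition \erf{eq:def-Delta-m-eta} of $m_F$, promotes
$m_F^{(p-1)}$ to $m_F^{(p)}$. Dually on the output side one builds $\Delta_F^{(q)}$
starting from $\omF \eq \mCorr(\gpq 003)$ and using \erf{eq:def-Delta-m-eta}. For the
handle contribution, I would first identify the correlator obtained by sewing an incoming
and an outgoing cylinder to the boundary of $\gpq 110$ --- a morphism in
$\HomD(F,F\oti K)$ --- and verify, starting from \erf{eq:mCorr110} and using the Frobenius
structure together with the definitions of $\PhF$ and $\iK_F$, that it coincides with
$\tau_F$. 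Gluing $g$ such handles in succession via the type-\erf{elemsurf2} sewings (each
realized by \erf{eq:sewmap2}) then assembles $\tau_F^{(g)}$, once more with
order-independence secured by \erf{eq:Fubini2}.

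The main obstacle is this last identification: matching the handle contribution with
$\tau_F$ requires a careful graphical computation in which the dinatural morphism $\iK_F$
coming from the $\oint$-coend over the handle is combined with the product, coproduct,
duality and pivotality morphisms precisely as in the defining formula of $\tau_F$.
Once this is established, the remaining work is bookkeeping: one collects the three
contributions $m_F^{(p)}$, $\tau_F^{(g)}$ and $\Delta_F^{(q)}$ into the composite
\erf{eq:Delta.tau.m} and observes that the various intermediate Z-, B- and F-isomorphisms
generated along the way cancel by commutativity, symmetry and the Frobenius property of
$F$ proved in Section \ref{sec4}.
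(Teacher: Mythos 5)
Your plan is correct and is essentially the argument the paper intends: the Proposition is stated there without a separate written proof, on the strength of the preceding remark that iterating the compatibility relation \erf{eq:mCorr-cut} with the explicit sewing maps of Definition \ref{def:mBlF3} expresses every correlator in closed form, which is precisely the pants-plus-handles induction you describe. The one step you rightly single out as nontrivial -- matching the glued handle contribution $(\id_F\otimes \mathrm v^1_{1|0})\circ\Delta_F$ with $\tau_F^{}$ -- does go through, via the Frobenius identity $(m_F\otimes\id_F)\circ\bigl(\id_F\otimes[(\Phinv\otimes\pi_F^{-1})\circ b_{F^\vee_{}}]\bigr)=\Delta_F$ together with coassociativity.
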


\begin{rem} ~
\\[2pt]
(i)\, For any finite ribbon category the monoidal unit $\one$ carries a trivial structure
of a Frobenius algebra, which is commutative and symmetric. According to Proposition 
\ref{prop-0} it thus provides a consistent system of bulk field correlators at genus zero, 
albeit a rather boring one. In fact, in this case the expression \erf{eq:Delta.tau.m} 
reduces to $\mathrm v^g_{p|q} \eq \tau_\one^{(g)} \eq \eta_K^{\otimes g}$. In particular,
$S^K \cir \mathrm v^1_{1|0} \eq S^K {\circ}\, \eta_K \eq \Lambda_K$, implying that 
the monoidal unit of a modular finite ribbon category \D\
is \emph{not} modular, unless $\D \simeq \Vect$.
\\[2pt]
(ii)\, Existence of any modular Frobenius algebra in a modular finite ribbon category
is far from guaranteed. 
In case \D\ is semisimple, it follows from Corollary 4.1(i) of \cite{dmno} that
a modular Frobenius algebra in \D\ exists iff \D\ is a Drinfeld center.
\\[2pt]
(iii)\, If $\D \eq \C \boti \C^{\rm rev}$ is the enveloping category of a semisimple 
modular tensor category \C, consistent systems of correlators (both 
of bulk fields and of 
boundary fields, and also for surfaces with a network of topological defect lines)
can be constructed with the help of the three-di\-men\-si\-o\-nal 
topological field theory based on \C\ \cite{fuRs4,fjfrs}. The corresponding modular
Frobenius algebras in $\C \boti \C^{\rm rev}$ are related by a center construction
\cite{ffrs,davy20} to the special
symmetric Frobenius algebras in \C\ used in \cite{fuRs4,fjfrs}. Theorem \ref{thm-g} 
reproduces in this case the classification results in \cite{fjfrs2} and \cite{kolR}.
\\[2pt]
(iv)\, For any finite-dimensional factorizable ribbon Hopf algebra $H$, the category 
$H$-mod of fi\-ni\-te-dimensional $H$-modules is a modular finite ribbon category. Further
\cite{fuSs3}, for any ribbon automorphism $\omega$ of $H$ the coend 
$F_\omega \,{:=} \int^{M \in H\text{-mod} \!} \bar\omega(M) \boti M^\vee$, with $\bar\omega$ 
the automorphism of the identity functor induced by $\omega$, carries a
structure of a modular Frobenius algebra in the enveloping category
$H\text{-mod} \boti H\text{-mod}^{\rm rev}$. The formula \erf{eq:Delta.tau.m} for the
associated correlators has in these cases been given in \Cite{Rem.\,3.3}{fuSs5}
(see also \Cite{Eq.\,(3.5)}{fuSs5} for a graphical description). 
It is worth noting that for non-semisimple $H$ the objects 
$F_\omega \iN H\text{-mod} \boti H\text{-mod}^{\rm rev}$ are neither semisimple nor 
projective; we expect that this is a generic feature when \D\ is non-semisimple.
\\[2pt]
(v)\,
For \C\ any finite tensor category, denote by $R$ the right adjoint of the forgetful functor
from the Drinfeld center $\mathcal Z(\C)$ to \C. If \C\ is unimodular, then $R(\one)$ is a 
commutative symmetric Frobenius algebra in $\mathcal Z(\C)$ \Cite{Thm.\,6.1}{shimi7}. 
For $\C \eq H$-mod as in (iv), $R(\one)$ is the image of $F_{\idsm}$ under the equivalence
$\C\boti\C^{\rm rev} \,{\xrightarrow{\,\simeq\,}}\, \mathcal Z(\C)$. It is thus natural to 
conjecture that $R(\one)$ is in fact a modular Frobenius al\-ge\-bra for any modular 
finite ribbon category \C. If \C\ is semisimple, then by Proposition 4.8 of \cite{dmno}
$R(\one)$ is a Lagrangian algebra in the sense of \Cite{Def.\,4.2}{dmno}.
\end{rem}

   \vskip 4em

\noindent
{\sc Acknowledgements:}\\[.3em]
We are grateful to Roald Koudenburg for a correspondence on Kan extensions 
of monoidal functors, and to Kenichi Shimizu for a correspondence on 
Frobenius structures from ambidextruous adjoints.
We also thank the referee for numerous suggestions for improving the exposition.
\\
JF is supported by VR under project no.\ 621-2013-4207.
CS is partially supported by the Collaborative Research Centre 676 ``Particles,
Strings and the Early Universe - the Structure of Matter and Space-Time'', by the RTG 1670
``Mathematics inspired by String theory and Quantum Field Theory'' and by the DFG Priority
Programme 1388 ``Representation Theory''.

\newpage
%%%%%%%%%%%%%%%%%%%%%%%%%%%%%%%%%%%%%%%%%%%%%%%%%%%%%%%%%%%%%%%%%%%%%%%%%%%%%%%%%%

 \newcommand\wb{\,\linebreak[0]} \def\wB {$\,$\wb}
 \newcommand\Bi[2]    {\bibitem[#2]{#1}}
 \newcommand\Epub[2]  {{\em #2}, {\tt #1}}
 \newcommand\inBo[8]  {{\em #8}, in:\ {\em #1}, {#2}\ ({#3}, {#4} {#5}), p.\ {#6--#7} }
 \newcommand\inBO[9]  {{\em #9}, in:\ {\em #1}, {#2}\ ({#3}, {#4} {#5}), p.\ {#6--#7} {\tt [#8]}}
 \newcommand\J[7]     {{\em #7}, {#1} {#2} ({#3}) {#4--#5} {{\tt [#6]}}}
 \newcommand\JJ[7]    {{\em #7}, {#1}{#2} ({#3}) {#4--#5} {{\tt [#6]}}}
 \newcommand\JO[6]    {{\em #6}, {#1} {#2} ({#3}) {#4--#5} }
 \newcommand\JP[7]    {{\em #7}, {#1} ({#3}) {{\tt [#6]}}}
 \newcommand\BOOK[4]  {{\em #1\/} ({#2}, {#3} {#4})}
 \newcommand\PhD[2]   {{\em #2}, Ph.D.\ thesis #1}
 \newcommand\Prep[2]  {{\em #2}, preprint {\tt #1}}

\def\adma  {Adv.\wb Math.}
\def\atmp  {Adv.\wb Theor.\wb Math.\wb Phys.}   
\def\coma  {Con\-temp.\wb Math.}
\def\comp  {Com\-mun.\wb Math.\wb Phys.}
\def\cpma  {Com\-pos.\wb Math.}
\def\inma  {Invent.\wb math.}
\def\jktr  {J.\wB Knot\wB Theory\wB and\wB its\wB Ramif.}
\def\jmst  {J.\wb Math.\wb Sci.\wb Univ.\wB Tokyo}
\def\joal  {J.\wB Al\-ge\-bra}
\def\jomp  {J.\wb Math.\wb Phys.}
\def\jopa  {J.\wb Phys.\ A}
\def\jpaa  {J.\wB Pure\wB Appl.\wb Alg.}
\def\jram  {J.\wB rei\-ne\wB an\-gew.\wb Math.}
\def\maan  {Math.\wb Annal.}
\def\momj  {Mos\-cow\wB Math.\wb J.}
\def\nupb  {Nucl.\wb Phys.\ B}
\def\pnas  {Proc.\wb Natl.\wb Acad.\wb Sci.\wb USA}
\def\ruma  {Revista de la Uni\'on Matem\'atica Argentina}
\def\slnm  {Sprin\-ger\wB Lecture\wB Notes\wB in\wB Mathematics}
\def\taac  {Theo\-ry\wB and\wB Appl.\wB Cat.}
\def\tams  {Trans.\wb Amer.\wb Math.\wb Soc.}
\def\toap  {Topology\wB Applic.}
\def\topo  {Topology}
\def\trgr  {Trans\-form.\wB Groups}

\small

%%%%%%%%%%%%%%%%%%%%%%%%%%%%%%%%%%%%%%%%%%%%%%%%%%%%%%%%%%%%%%%%%%%%%%%%
  \end{document}